\documentclass[reqno]{amsart}
\usepackage[english]{babel}
\usepackage{cite} 
\usepackage[dvips]{graphicx}

\usepackage[usenames, dvipsnames]{xcolor}
\usepackage[utf8x]{inputenc}
\usepackage[T1]{fontenc}
\usepackage{tikz}
\usepackage{pgfplots}
\pgfplotsset{compat=1.9}
\usepackage{transparent} 
\usepackage[backref=page]{hyperref}
\usepackage{hyperref}

\usepackage{caption}
\usepackage{float}
\usepackage{multirow}
\usepackage{adjustbox}
\usepackage{amsfonts,amsmath,amsxtra,amsthm,amssymb,latexsym, bm}
 \newcommand\dom{\operatorname{dom}}

\def\dg{\mathcal{G}}

\def\ldos{\mathcal{L}_{-,Z}}
\def\luno{\mathcal{L}_{+,Z}}
\newcommand{\xb}{X_{N,\perp}}
\newcommand{\xa}{X_{1,\perp}}
\newcommand{\xm}{X_{\mathrm{mean}}}
\newcommand{\hz}{\mathcal{H}^{\delta'}_Z}
\def\ker{\operatorname{Ker}}
\def\dom{D(\mathcal{H}_{Z}^{\delta'})}

\def\V{\mathbf{V}}

\usepackage{import}
\usepackage{transparent}

\newcommand{\R}{\mathbb{R}}
\newcommand{\C}{\mathbb{C}}

\renewcommand{\le}{\leqslant}
\renewcommand{\ge}{\geqslant}
\renewcommand{\leq}{\leqslant}
\renewcommand{\geq}{\geqslant}

\newcommand{\be}{\begin{equation}}
\newcommand{\en}{\end{equation}}
\newcommand{\ee}{\end{equation}}

\newcommand{\bt}{\begin{theorem}}
\newcommand{\et}{\end{theorem}}
\newcommand{\bp}{\begin{proof}}
\newcommand{\ep}{\end{proof}}
\newcommand{\bc}{\begin{cor}}
\newcommand{\ec}{\end{cor}}
\newcommand{\bl}{\begin{lemma}}
\newcommand{\el}{\end{lemma}}
\newcommand{\bprop}{\begin{proposition}}
\newcommand{\eprop}{\end{proposition}}

\renewcommand*{\theinnercustomthm}{\Alph{theorem}}

\newtheorem{theorem}{Theorem}[section]
\newtheorem{remark}{Remark}
\newtheorem{lemma}[theorem]{Lemma}
\newtheorem{definition}{Definition}
\newtheorem{proposition}[theorem]{Proposition}

\numberwithin{theorem}{section} \numberwithin{definition}{section}
\numberwithin{equation}{section}

\newcommand{\RNum}[1]{\uppercase\expandafter{\romannumeral #1\relax}}
\def\R{\mathbb{R}}

\def\C{\mathbb{C}}

\newcommand{\vertiii}[1]{{\left\vert\kern-0.25ex\left\vert\kern-0.25ex\left\vert #1 
		\right\vert\kern-0.25ex\right\vert\kern-0.25ex\right\vert}}

\theoremstyle{definition}

\begin{author}
{Jaime Angulo Pava and Alexander Mu\~noz }
\end{author}

\begin{title}[(In)stability of discontinuous standing waves for NLS on star graphs]
{On (in)stability of discontinuous standing waves for the NLS with a delta-prime on star graphs}
\end{title}
\date{}
\begin{document}
\maketitle

\centerline{Department of Mathematics,
IME-USP}
 \centerline{Rua do Mat\~ao 1010, Cidade Universit\'aria, CEP 05508-090,
 S\~ao Paulo, SP, Brazil.
 }
 \centerline{\it {angulo@ime.usp.br}, alexd@usp.br}
 
\section*{Abstract}
We investigate the existence and (in)stability of standing waves that are discontinuous at the vertex for the one-dimensional nonlinear Schr\"odinger equation (NLS) with a power nonlinearity, posed on a star graph consisting of a finite number $N$ of half-lines and endowed with a delta-prime interaction. Our non-variational approach characterizes all possible configurations of such standing wave profiles and shows, rather surprisingly, that the components of any discontinuous-at-the-vertex stationary solution split into two groups, each determined by one of exactly two distinct types of shifted soliton for the NLS on a half-line. More precisely, for every $N\geqq 2$ and every $n\in\{1,\dots,N-1\}$, there are two groups consisting of exactly $n$ and $N-n$ components, with each group sharing a common shift. The stability properties of these discontinuous-at-the-vertex stationary states are analyzed via the Grillakis-Shatah-Strauss framework and the Grillakis-Jones Instability Theorem, where non-standard techniques are required to establish the necessary spectral properties of the linearization operators, in particular their Morse and deficiency indices. Our approach yields a complete description of these indices. The results of this manuscript optimally extend previous findings in the literature concerning delta-prime type interactions on star graphs. Moreover, the methods developed here have the prospect of being adapted to study the stability of other discontinuous-at-the-vertex stationary solutions of the NLS on non-compact metric graphs, such as looping-edge graphs.

\qquad\\
\textbf{Mathematics Subject Classification (2020)}. Primary 35Q51, 35Q55, 81Q35, 35R02; Secondary 47E05.\\
\textbf{Key words}. Schr\"odinger equation, quantum graphs, $\delta'$ interactions.
\section{Introduction}
We consider the nonlinear Schr\"odinger equation (NLS)
\begin{equation}\label{NLS}
i\textbf{U}_t + \Delta\textbf{U}+ |\textbf{U}|^{p-1}\textbf{U} =\textbf{0},\quad p>1,
\end{equation}
where the action of the Laplacian operator $\Delta$ on a general metric graph $\mathbb{G}$ is given by
\begin{equation}
-\Delta: (u_e)_{e\in E} \to (-u''_e)_{e\in E}.
\end{equation}  

In this work we focus on star graphs, a metric graph composed of a finite number $N$ of infinite half-lines attached to a common vertex. If the half-lines are identified with $[L, \infty)$ with $L\ge 0$, we obtain a particular metric graph structure, that we will denote with $\dg$, represented by $\dg=V\cup E$ where $E = \{[L, \infty), \dots, [L, \infty)\}$, $V = \{\nu\}$ and along the coordinate system of each edge $e$, the single vertex $\nu$ is located exactly at $x_e=L$ (see Figure~\ref{Fig1}). We use the subscripts $j$ to refer to the edges $e_j = [L, \infty)$, $j = 1, \dots, N$. A wave function $\textbf{U}$ defined on $\dg$ will be understood as an $N$-tuple of functions $\textbf{U} = (\psi_1,\dots,\psi_N)$, where each $\psi_j$ is defined on $e_j = [L, \infty)$ for $j = 1, \dots, N$. 

A key starting point for studying the dynamical properties of the NLS on $\mathcal{G}$ is to require the Laplacian operator $-\Delta$ to be self-adjoint, with domain $D(-\Delta)\subset L^2(\dg)$, which encodes the coupling condition at the graph vertex $\nu=L$. Previous works have investigated the dynamics of standing waves solutions $\textbf{U}(x, t)=e^{i\omega t} \Theta_\omega(x)$, $\Theta_\omega=(\Psi_j)_{j=1}^N$, by the flow of  the NLS on the graph $\dg$, with  the profile $\Theta_\omega$ satisfying the system
\begin{equation}
\label{elliptic}
-\Delta \Theta_\omega +\omega \Theta_\omega- |\Theta_\omega|^{p-1}\Theta_\omega =\textbf{0}, \quad p>1
\end{equation}
with the {\it a priori} asymptotic behavior $\Theta_\omega\to 0$ as $x\to +\infty$. The best known coupling conditions in the graph vertex are the so-called delta ($\delta$-henceforth) an delta-prime ($\delta '$-henceforth) interactions. For the presence of a  $\delta$-interaction, the Laplace  operator $-\Delta=\mathcal{H}_Z^{\delta}$ emerges with the domain $D_{Z, \delta}$, with $Z\in \R$, and 
\begin{equation}
\label{domainintrodelta}
D_{Z, \delta}:=\Big\{\mathbf{U}=(\psi_j)_{j=1}^N\in H^2(\dg) \Bigm| \psi_1(L)=\cdots=\psi_N(L), \quad \sum_{j=1}^N \psi'_j(L)=Z\psi_1(L) \Big\},
\end{equation}
where the continuity of the wave functions  at the vertex is mandatory. NLS with $\delta$-interaction has been
widely studied in the literature (see \cite{AdaNoj15, AdaNoj14, AdaNoj14a},  \cite{AngGol17a, AngGol17b}, \cite{KNP}, and references therein). The existence and (in)stability of standing waves, as well as local and/or global well-posedness theories, have been already established in the literature. On the other hand, for the $\delta '$-interactions, the Laplace  operator $-\Delta=\mathcal{H}_Z^{\delta '}$ emerges with the domain  $\dom$, with $Z\in \R$, and 
\begin{equation}
\label{domainintro}
D(\mathcal{H}_Z^{\delta'}):=\Big\{\mathbf{U}=(\psi_j)_{j=1}^N\in H^2(\dg) \Bigm| \psi_1'(L)=\cdots=\psi_N'(L), \quad \sum_{j=1}^N \psi_j(L)=Z\psi_1'(L) \Big\}, 
\end{equation}
where continuity of the wave functions at the vertex is not required. The NLS with $\delta'$-interaction has been less studied in the literature for arbitrary $N$. In the case $Z<0$ (the attractive case), existence and orbital stability of standing waves with profile $\Theta_\omega$ whose components are all equal, $\psi_1=\cdots=\psi_N$, and of tail-type ($\psi_1'(x)<0$ for all $x$), were initially studied in \cite{AngGol17b}, and later extended in \cite{G}. Moreover, in \cite[Theorems~4.2 and 5.23]{G}, it was also studied the existence and instability of some discontinuous (asymmetric) standing wave solutions in $\dom$ via variational methods (minimization problems on a specific Nehari manifold). In the present work we characterize all possible configurations of such standing wave profiles via non-variational techniques and so we extend the existence of this kind of solutions to an optimal range of existence and provide an almost complete (in)stability picture  all these configurations (see Table \ref{table:1}) in  Appendix D.

In contrast with the theory in metric graphs, for the case $\mathbb{G}=\mathbb{R}$ with a $\delta'$-interaction located at the origin, extensive studies have been conducted in recent years (see for instance \cite{ABG, AdaNo, AngGol17, AngGol17b, G}). We note that in this case, sign-changing and/or non-symmetric standing-wave profiles are possible; see Figure~1 in \cite{ABG}.

A natural question concerns the range of admissible self-adjoint realizations of $-\Delta$ on the star graph $\dg$ that retain the derivative-continuity structure of $\dom$. The $\delta'$-condition in \eqref{domainintro} is in fact only one member of a much larger family. Building on the systematic parametrization of all self-adjoint extensions of the Laplacian on looping-edge graphs (graphs obtained by attaching a circle to the common vertex of a star graph) carried out in \cite{AnMu}, one identifies a broad class of extensions whose dynamics are governed by a Hermitian matrix $M'\in M_N(\mathbb{C})$ acting through the vertex condition $M'\vec\psi(L)=\vec\psi\,'(L)$, where $\vec\psi(L)=(\psi_1(L),\dots,\psi_N(L))^\top$ and $\vec\psi\,'(L)=(\psi_1'(L),\dots,\psi_N'(L))^\top$. Each Hermitian choice of $M'$ prescribes a different coupling at the vertex while preserving the half-line structure: for instance, $M'=0$ yields the Neumann decoupling $\psi_j'(L)=0$; the choice $M'=\alpha I_N$ produces uniform Robin conditions $\psi_j'(L)=\alpha\psi_j(L)$; and the rank-one choice $M'=\tfrac{1}{Z}\mathbf{1}\mathbf{1}^\top$ recovers precisely the $\delta'$-interaction \eqref{domainintro} studied here (see \cite[Remark~6]{AnMu}). We emphasize that the domain $\dom$ adopted in the present manuscript occupies an intermediate position within this hierarchy: it is rich enough to give rise to a genuinely nontrivial study of stable-unstable transitions, yet structured enough to permit a complete and explicit (in)stability theory. In this sense, the $\delta'$-interaction serves as a natural building block from which more complex vertex couplings, and the richer dynamics they entail, can subsequently be explored.

The $\delta'$ framework for the NLS on star graphs bridges theoretical quantum mechanics and applied nonlinear physics across several domains. In nonlinear optics, it serves as a foundational model for light propagation in branched optical waveguides, fiber-optic arrays, and photonic networks, where the $\delta'$ vertex condition simulates point-like structural defects, splitters, or junctions that govern how optical solitons are trapped, transmitted, or reflected. In Bose-Einstein Condensation (BEC), it is widely applied to the study of ultracold atoms confined to ramified, quasi-one-dimensional geometries (e.g., $Y$-shaped or star-shaped optical traps), where the $\delta'$ interaction effectively models localized potential barriers or constrictions along the condensate's path (see Tokuno {\it et al.}~\cite{TOD}). In condensed matter physics, the model describes quantum transport phenomena, such as electron tunneling and charge-carrier flow through complex branching molecular networks, branched nanowires, and quantum graph-based circuit systems. In neuronal networks and fluid dynamics, it models pulse and signal propagation in complex branching structures, including neuronal signal routing and fluid flow in branching conduits (see \cite{BurCas01, GSD, Fid15, Mug15, KNP, SMSSK} for further details and references).

\begin{figure}
    \centering
    \resizebox{0.4\linewidth}{!}{
\begingroup%
  \makeatletter%
  \providecommand\color[2][]{%
    \errmessage{(Inkscape) Color is used for the text in Inkscape, but the package 'color.sty' is not loaded}%
    \renewcommand\color[2][]{}%
  }%
  \providecommand\transparent[1]{%
    \errmessage{(Inkscape) Transparency is used (non-zero) for the text in Inkscape, but the package 'transparent.sty' is not loaded}%
    \renewcommand\transparent[1]{}%
  }%
  \providecommand\rotatebox[2]{#2}%
  \newcommand*\fsize{\dimexpr\f@size pt\relax}%
  \newcommand*\lineheight[1]{\fontsize{\fsize}{#1\fsize}\selectfont}%
  \ifx\svgwidth\undefined%
    \setlength{\unitlength}{171.20774757bp}%
    \ifx\svgscale\undefined%
      \relax%
    \else%
      \setlength{\unitlength}{\unitlength * \real{\svgscale}}%
    \fi%
  \else%
    \setlength{\unitlength}{\svgwidth}%
  \fi%
  \global\let\svgwidth\undefined%
  \global\let\svgscale\undefined%
  \makeatother%
  \begin{picture}(1,0.47545646)%
    \lineheight{1}%
    \setlength\tabcolsep{0pt}%
    \put(0,0){\includegraphics[width=\unitlength,page=1]{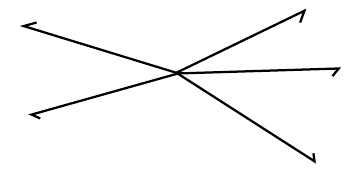}}%
    \put(0.00172034,0.42803755){\color[rgb]{0,0,0}\makebox(0,0)[lt]{\lineheight{1.25}\smash{\begin{tabular}[t]{l}$+\infty$\end{tabular}}}}%
    \put(0.03279585,0.10432344){\color[rgb]{0,0,0}\makebox(0,0)[lt]{\lineheight{1.25}\smash{\begin{tabular}[t]{l}$+\infty$\end{tabular}}}}%
    \put(0.89164774,0.00333745){\color[rgb]{0,0,0}\makebox(0,0)[lt]{\lineheight{1.25}\smash{\begin{tabular}[t]{l}$+\infty$\end{tabular}}}}%
    \put(0.97030182,0.25241311){\color[rgb]{0,0,0}\makebox(0,0)[lt]{\lineheight{1.25}\smash{\begin{tabular}[t]{l}$+\infty$\end{tabular}}}}%
    \put(0.88096716,0.46879935){\color[rgb]{0,0,0}\makebox(0,0)[lt]{\lineheight{1.25}\smash{\begin{tabular}[t]{l}$+\infty$\end{tabular}}}}%
    \put(0.48934885,0.19346817){\color[rgb]{0,0,0}\makebox(0,0)[lt]{\lineheight{1.25}\smash{\begin{tabular}[t]{l}$L$\end{tabular}}}}%
    \put(0,0){\includegraphics[width=\unitlength,page=2]{N=5sample.pdf}}%
  \end{picture}%
\endgroup%
}
    \caption{A metric star graph with $N=5$ and edge lengths identified with $[L,\infty)$.}
    \label{Fig1}
\end{figure}

The purpose of the present work is to address the existence and (in)stability of standing waves solutions \emph{discontinuous} at the vertex for the NLS equation  and lying in the $\delta'$-type operator domain $\dom$ in \eqref{domainintro} with $Z<0$. Our study characterizes all possible configurations of such standing wave profiles and shows, in a rather surprising way, that any discontinuous-at-the-vertex stationary solution $\Theta_\omega$, with $\omega$ fixed, has its components divided into two groups, determined exactly by two distinct types of shifted soliton for the NLS. This soliton profile is given by the unique positive decaying solution (up to translation)  of
\begin{equation}
\label{elliptichalf}
- \Psi'' +\omega \Psi - \Psi^{p} =0, \quad\omega>0,\;\; x\geqq L.
\end{equation}
The results in this manuscript optimally extend and complement some findings in the literature regarding $\delta'$ type interactions on star graphs. For a better explanation of our results, along the manuscript we will deal with the cubic case $p=3$ but we remark that the sharp theory developed below extends with optimality to the cases $p=2$ and $p=4$ since the arguments and computations are identical in spirit to those for $p=3$ (see Remark~\ref{p-thres}); we restrict our detailed presentation only to $p=3$ to avoid overloading the paper and reserving Section~\ref{sec:generalp} to discuss the general case $p>1$ (where some natural restrictions emerge in the study).

A key feature of this setting is that the derivative continuity condition defining domain \eqref{domainintro}, together with the structure of the half-line stationary shifted soliton solution of \eqref{elliptichalf}
\begin{equation}\label{solitonintro}
\Psi_a(x)=\sqrt{2\omega}\,\operatorname{sech}\!\big(\sqrt{\omega}(x-L)+a\big), \quad x\ge L,
\end{equation} 
force the components of the stationary solution to be defined by {\it only two admissible type of shifts} $a$, denoted by $a_A(\omega)$ and $a_B(\omega)$. These shifts determine  the components $\Psi_{a_j}$ of any  discontinuous-at-the vertex stationary solution $\Theta_\omega$. When $Z<0$ such components are  given by
\[
a_A(\omega):=\operatorname{arctanh}(t_A(\omega))>0, \qquad a_B(\omega):=\operatorname{arctanh}(t_B(\omega))>0,
\] 
where $ t_A(\omega), t_B(\omega)$ are chosen through the vertex sum condition prescribed in $D(\mathcal{H}_Z^{\delta'})$ of a smooth way with regard to $\omega$ and they satisfy
\begin{equation}\label{tAB}
0<t_A(\omega)<\frac{1}{\sqrt{2}}<t_B(\omega)<1,\;\;\;\; t^2_A(\omega)+ t^2_B(\omega)=1
\end{equation} 
where the phase-velocity $\omega$ satisfies specific and natural constraints (see the proof of Theorem \ref{thm:existence} and Proposition \ref{prop:existencet} below in Section 4). 

\medskip
An important geometric distinction between profiles with shift of type $A$ and 
profiles with shift of type $B$, for $a_A\neq a_B$, is that type $A$ contain the right-hand 
inflection point of the soliton in \eqref{solitonintro} while type $B$ does not  have  inflection points  (see Figure~\ref{fig:placeholder} for some profile configurations of our interest). We will refer to the profiles with shift of type $B$ as \emph{pure tail} profiles, in the sense that 
$\Psi_{a_B}''(x)>0$ for all $x\geqq L$.

We note that when $a_A\neq a_B$ no profile can satisfy the 
condition $\Psi_a''(L)=0$. In other words, truly discontinuous configurations cannot have shifts $a$ that place $L$ at an inflection point of the soliton. 

In our setting, $p=3$, the case $a_A(\omega)=a_B(\omega)=\operatorname{arctanh}(1/\sqrt{2})$ ($t_A(\omega)=t_B(\omega)$) occurs only at the single transition frequency 
\begin{equation}\label{eq:trans-w}
\widehat{\omega}:=\frac{2N^2}{Z^2}
\end{equation}
for certain clustering configurations of the stationary solution $\Theta_\omega$ (see the proof of Theorem \ref{thm:existence} and Remark~\ref{rem:w=2N^2/Z^2}). In such case all the components $\Psi_{a_j}$ coincide and the resulting $\Theta_\omega$ is continuous at the vertex with every component satisfying $\Psi_{a_j}''(L)=0$. As we will see from our existence and  (in)stability theories, this frequency $\widehat{\omega}$ plays the role of a threshold value in three specific scenarios: firstly, this value produce inconclusive stability information about the associated standing waves (see Theorem \ref{thm:insta},  Propositions  \ref{prop:l2}, \ref{prop:ker-L1Z}, or Theorem \ref{thm:morse1}). Secondly, this frequency becomes a transition limit for stable-unstable dynamics of profiles in the case  $a_A\neq a_B$ (see Theorem \ref{thm:insta} and Table~\ref{table:1}). Lastly, at this frequency, we obtain an inversion of the shift type of the components $\Psi_{a_j}$ for some unbalanced clustering configurations (see Theorem \ref{thm:existence}).

\medskip
Before establishing our main existence theorem of discontinuous-at-the-vertex stationary solutions of \eqref{NLS}, $p=3$, in $D(\mathcal{H}_Z^{\delta'})$, let us establish some basic notations. We consider $N\ge 2$ and $n\in \{1,\dots,N-1\}$. The existence of discontinuous-at-the-vertex stationary solutions will be
organized into two groups of exactly $n$ and $N-n$ component sharing a common shift. 
Such solutions take the form
\begin{equation}
\label{eq:thetan}
\Theta_{\omega,n}
:=
\bigl(\Psi_{a_1},\dots,\Psi_{a_N}\bigr), 
\qquad a_1=\dots=a_n\neq a_{n+1}=\dots=a_N, \qquad a_1,a_N\in\{a_A(\omega),\, a_B(\omega)\},
\end{equation}
 where each time $a_1$ defines a type $A$ profile then $a_N$ defines a type $B$ profile and vice-versa (see Figure~\ref{fig:placeholder} for the case $N=4$). We note that in the excluded cases $n=0, n=N$ (and also when $N=1$), on each edge we get the same component profile $\Psi_{a_1}$, generating a continuous-at-the-vertex stationary solution $\Theta_{\omega, 0}$ with $ a_1=\dots=a_N$, $\Psi_{a_1}=\cdot\cdot\cdot=\Psi_{a_N}\equiv \Psi_{\omega, 0}$, and
 \[
 \Psi_{ \omega, 0}(x)=\sqrt{2\omega}\,\operatorname{sech}\!\Big(\sqrt{\omega}(x-L)+{\tanh}^{-1} \Big(\frac{-N}{Z\sqrt{\omega}}\Big)  \Big), \quad x\ge L,
\]
 for $\omega>N^2/Z^2$ (see \eqref{eq:two-star-intro}). Stability properties of this branch of standing wave profiles were studied in Angulo and Goloshchapova in \cite{AngGol17b} and it is worth emphasizing that the frequency 
$\widehat{\omega}$ in \eqref{eq:trans-w} also represents a threshold value for the transition of  stable-unstable dynamics of the profiles $\Psi_{\omega, 0}$ in their work.

 \medskip
For fixed $n\in\{1,\dots,N-1\}$, the existence of standing waves discontinuous at the vertex, $\Theta_{\omega,\,n}$,  gathered
exactly by groups of $n$ and $N-n$ edges and lying in the domain 
$D(\mathcal{H}_Z^{\delta'})$, is ensured whenever the frequency satisfies
\begin{equation}
    \label{eq:two-star-intro}
    \omega>\omega_n^*:=\frac{\big(n^{2/3}+(N-n)^{2/3}\big)^{3}}{Z^2},
\end{equation}
which is the explicit existence-threshold (namely, for $\omega<\omega_n^*$ no positive discontinuous stationary profile in $\dom$  exists) associated with the minimum of a function $\alpha_n$ encoding the boundary conditions (see 
\eqref{eq:alphan}, \eqref{eq:cubic-infimum} and Figure \ref{fig:p=3} below). When $\omega=\omega_n^*$ one can still exhibit stationary solutions $\Theta_{\omega_n^*,\,n}$ which are discontinuous and in $\dom$ for unbalanced configurations   $2n\neq N$, however, despite the existence, the (in)stability analysis developed here fails at this frequency because the definition of the shifts $a_A$, $a_B$ via $\alpha_n$ takes place exactly at a critical point of $\alpha_n$ (see subsection 4.2- \eqref{eq:cubic-infimum}), degenerating most of the arguments in the spectral analysis (see Table \ref{table:1} or the proof of Proposition \ref{prop:ker-L1Z}) .

\smallskip
It is evident from the definition of $\omega_n^*$ that the more half-lines 
attached to the common vertex, the higher the frequencies $\omega$ need to 
be to guarantee existence. Moreover, no positive discontinuous stationary profile in $\dom$ 
exists whenever
\[
\omega< \omega_1^*=\frac{\big(1+(N-1)^{2/3}\big)^{3}}{Z^2}=\min_{1\le n\le N-1}\omega_n^*,
\]
regardless of the choice of $n$.

\bigskip
In Section~\ref{sec:exists} we prove the following  main existence  result of discontinuous standing waves  at the vertex $\nu=L$ for the cubic-NLS as a consequence of Proposition \ref{prop:existencet}.

\begin{figure}
    \centering
    \makebox[\textwidth][c]{\resizebox{\linewidth}{!}{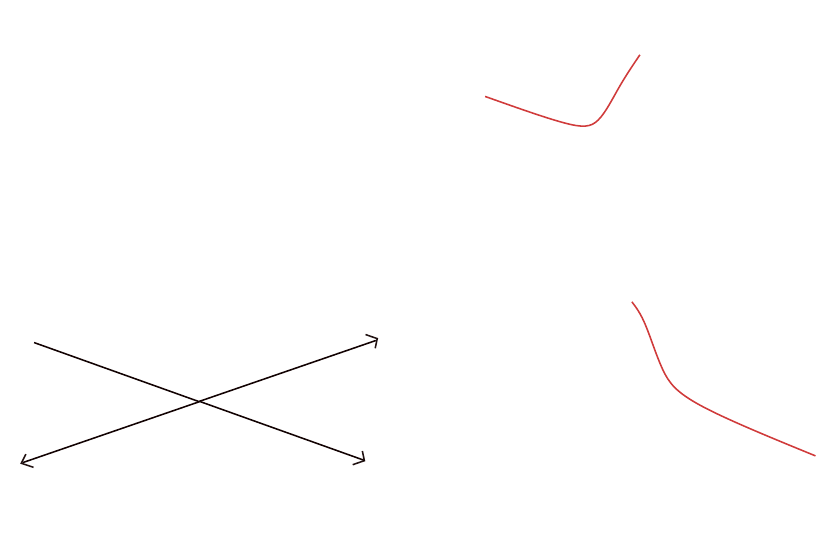}}
    \caption{For $N=4$ schematic of all the different positive profiles configurations in $\dom$ which are discontinuous at the vertex and where $\Psi_{a_A}$ and $\Psi_{a_B}$ are tail profiles \eqref{soliton} with shift of type $A$ and $B$ respectively.}
    \label{fig:placeholder}
\end{figure}

\begin{theorem}\label{thm:existence}
Let $Z<0$, $N\ge 2$ and fix $n\in\{1,\dots,N-1\}$. Let $\omega_n^*>0$  given by \eqref{eq:two-star-intro} (the existence-threshold value). Then for every 
$\omega>\omega_n^*$ there exist two shifts $0<a_A(\omega)<a_B(\omega)$ such 
that the stationary profile $\Theta_{\omega,n}$ defined in \eqref{eq:thetan} 
belongs to $\dom$. Moreover, for $n$ fixed, the map
\[
(\omega_n^*,\infty)\ni \omega \longmapsto e^{it\omega}\Theta_{\omega,n} \in D(\mathcal{H}_{Z}^{\delta'})
\]
defines a smooth curve of standing waves for the NLS equation \eqref{NLS} on $\dg$ that are discontinuous at the vertex.

The specific assignment of shift types ({\it the configuration of the profile on $\mathcal G$}) is given as follows:
\begin{enumerate}
\item[1)] if $2n\le N$ then $a_1=a_A(\omega)$ and $a_N=a_B(\omega)$. In other words, the configuration always produces exactly $n$ type-$A$ profiles together with $N - n$ type-$B$ profiles, for any $\omega> \omega_n^*$.
\item[2)] if $2n>N$ we have the existence of the transition frequency $\widehat{\omega}=2N^2/Z^2$ and
\[
\begin{cases}
a_1=a_A(\omega),\ \ a_N=a_B(\omega), & \text{if } \omega> \widehat{\omega},\\[4pt]
a_1=a_B(\omega),\ \ a_N=a_A(\omega), & \text{if } \omega_n^*< \omega<\widehat{\omega}.
\end{cases}
\]
At $\omega=\widehat{\omega}$ the two shifts types reverse, collapsing into a continuous configuration, $a_A(\omega)=a_B(\omega)=\operatorname{arctanh}(1/\sqrt{2})$.
\end{enumerate}
\end{theorem}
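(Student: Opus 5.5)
The plan is to reduce the vertex conditions in \eqref{domainintro} to a single scalar equation in $t=\tanh a$ and then study that equation on $(0,1)$. Every positive decaying solution of \eqref{elliptichalf} on $[L,\infty)$ with $p=3$ is a translate of the soliton, so each component has the form $\Psi_{a_j}$ from \eqref{solitonintro}. Negative shifts are excluded, since then $\Psi'_{a_j}(L)>0$ and the sum condition with $Z<0$ cannot hold. Writing $t_j=\tanh a_j\in(0,1)$ and $\operatorname{sech}a_j=\sqrt{1-t_j^2}$, we get
\[
\Psi_{a_j}(L)=\sqrt{2\omega}\sqrt{1-t_j^2},\qquad \Psi_{a_j}'(L)=-\sqrt{2}\,\omega\, t_j\sqrt{1-t_j^2}.
\]

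First I would use the derivative condition $\psi_1'(L)=\dots=\psi_N'(L)$. It forces $t_j^2(1-t_j^2)$ to be the same for all $j$. Since $s\mapsto s(1-s)$ is symmetric about $s=1/2$, each $t_j^2$ equals either $s$ or $1-s$. A discontinuous profile therefore has exactly two distinct values $t$ and $\sqrt{1-t^2}$, with $t\neq 1/\sqrt2$, which gives \eqref{tAB}. After permuting edges, the $n$ edges of the first group carry $t_1=t$ and the remaining $N-n$ edges carry $\sqrt{1-t^2}$.

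Next I would substitute into the sum condition $\sum_j\psi_j(L)=Z\psi_1'(L)$. Using $\sqrt{1-t_N^2}=t$ and dividing by $\sqrt{2\omega}\,t\sqrt{1-t^2}$, it becomes
\[
\alpha_n(t):=\frac{n}{t}+\frac{N-n}{\sqrt{1-t^2}}=|Z|\sqrt{\omega},\qquad t\in(0,1).
\]
The function $\alpha_n$ is strictly convex and tends to $+\infty$ at both endpoints. Its unique critical point $t_*$ satisfies $\big(t_*/\sqrt{1-t_*^2}\big)^3=n/(N-n)$, which gives
\[
t_*=\frac{n^{1/3}}{\sqrt{n^{2/3}+(N-n)^{2/3}}},\qquad \alpha_n(t_*)=\big(n^{2/3}+(N-n)^{2/3}\big)^{3/2}.
\]
Hence solutions exist if and only if $\omega\ge\omega_n^*$. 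For $\omega>\omega_n^*$ there are exactly two roots $t_-(\omega)<t_*<t_+(\omega)$.

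I would take $t_1=t_-(\omega)$ as the branch defining $\Theta_{\omega,n}$. Since $\alpha_n'(t_-)<0$, the implicit function theorem shows that $\omega\mapsto t_-(\omega)$ is smooth on $(\omega_n^*,\infty)$. Then $a_1=\operatorname{arctanh}t_-$ and $a_N=\operatorname{arctanh}\sqrt{1-t_-^2}$ are smooth, and $\omega\mapsto\Theta_{\omega,n}\in H^2(\dg)$ is smooth. The other root $t_+$ corresponds, via the identity $\alpha_n(t)=\alpha_{N-n}(\sqrt{1-t^2})$, to the configuration with the roles of $n$ and $N-n$ exchanged. Setting $t_A=\min(t_-,\sqrt{1-t_-^2})$ and $t_B=\max(t_-,\sqrt{1-t_-^2})$ gives $0<a_A<a_B$ whenever $t_-\ne1/\sqrt2$.

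The assignment of types, which I expect to be the main point requiring care, comes from comparing $t_-$ with $1/\sqrt2$. The key value is $\alpha_n(1/\sqrt2)=\sqrt2 N$, and $|Z|\sqrt\omega=\sqrt2N$ exactly when $\omega=\widehat\omega$.
\begin{enumerate}
\item[1)] If $2n\le N$, then $t_*\le 1/\sqrt2$. Since $t_-<t_*$, we always have $t_-<1/\sqrt2$, so the first group is of type $A$ and the second of type $B$ for every $\omega>\omega_n^*$.
\item[2)] If $2n>N$, then $t_*>1/\sqrt2$, and $t_-$ decreases strictly from $t_*$ to $0$ as $\omega$ increases from $\omega_n^*$. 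Because $\alpha_n$ is decreasing on $(0,t_*)$, $t_-$ passes through $1/\sqrt2$ exactly at $\omega=\widehat\omega$. It satisfies $t_->1/\sqrt2$ (type $B$ on the first group) for $\omega_n^*<\omega<\widehat\omega$, and $t_-<1/\sqrt2$ (type $A$) for $\omega>\widehat\omega$.
\end{enumerate}
At $\omega=\widehat\omega$ the two shifts coincide at $\operatorname{arctanh}(1/\sqrt2)$, so the configuration collapses to a continuous one. One must also check that $\widehat\omega>\omega_n^*$ in case 2). This follows because $\alpha_n(1/\sqrt2)>\alpha_n(t_*)$ when $t_*\neq1/\sqrt2$.
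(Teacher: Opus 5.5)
Your proposal is correct and follows essentially the same route as the paper. It reduces derivative matching to the pair $t,\sqrt{1-t^2}$ and the sum condition to $\alpha_n(t)=-Z\sqrt{\omega}$, takes the decreasing branch below the explicit minimizer via the implicit function theorem (as in Proposition~\ref{prop:existencet}), and places $t_1(\omega)$ relative to $1/\sqrt{2}$ through $\alpha_n(1/\sqrt{2})=N\sqrt{2}$. Your additions are correct refinements within the same argument rather than a different approach: strict convexity of $\alpha_n$, the check $\widehat{\omega}>\omega_n^*$ when $2n>N$, and the identity $\alpha_n(t)=\alpha_{N-n}(\sqrt{1-t^2})$, which shows the increasing branch only reproduces $\Theta_{\omega,N-n}$ up to a permutation of edges.
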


With regard only to the configuration of these stationary solutions, Theorem \ref{thm:existence} deserves some useful comments. For $N\ge 2$ fixed, we have the following number of discontinuous-at-the-vertex standing waves profiles (taking or not permutations of the edges into account):
\begin{enumerate}
\item[a)] for $\omega > 2N^2/Z^2$: there are exactly $N-1$ families of different profiles,
\item[b)] for $\omega < 2N^2/Z^2$: there are also $N-1$  families of profiles not including permutations of the edges. If we include permutations of the edges, we get $n$ different families of discontinuous profiles where $n$ represents the largest amount of elements in the set $$\{k\in\{1,\dots,N-1\} \mid 2k\le N\}$$ (see Proposition \ref{nonexis} and Remark~\ref{rem:w=2N^2/Z^2}). In the case $N=4$ showed in Figure \ref{fig:placeholder}, we have exactly $2$ different families of  configurations (cases $n=1,2$) because the configuration for $n=3$ agrees with that of $n=1$ (it is mandatory to clarify that the configurations are the same but the shifts values $a_A(\omega),\ a_B(\omega)$ are not, so $\Theta_{\omega,1}\neq \Theta_{\omega,3}$ even after permutation of the edges). 
\end{enumerate}

\bigskip
In the second part of this work, we develop our stability framework. We begin by noting that the basic symmetry associated with the NLS model \eqref{NLS} on a star graph is phase invariance: if $\textbf{U}$ is a solution of \eqref{NLS}, then $e^{i\theta}\textbf{U}$ is also a solution for any $\theta\in[0,2\pi]$. It is therefore natural to define orbital stability for the NLS model through this symmetry, as follows:

\begin{definition}\label{dsta}
A standing wave of the form $\mathbf{U}(x,t)=e^{i\omega t}\Theta(x)$ is called
\textit{orbitally stable} in a Banach space $X$ if the following holds: given any
$\varepsilon>0$, there exists $\eta>0$ such that whenever an initial datum
$\mathbf{U}_0\in X$ satisfies
\[
\|\mathbf{U}_0-\Theta\|_{X}<\eta,
\]
the corresponding solution $\mathbf{U}(t)$ of \eqref{NLS} with
$\mathbf{U}(0)=\mathbf{U}_0$ is globally defined for all $t\in\mathbb{R}$ and
remains close to the orbit of $\Theta$ in the sense that
\[
\sup_{t\in\mathbb{R}}\inf_{\theta\in\mathbb{R}}
\bigl\|\mathbf{U}(t)-e^{i\theta}\Theta\bigr\|_{X}<\varepsilon.
\]
If this property fails, then the standing wave $\mathbf{U}(x,t)=e^{i\omega t}\Theta$
is said to be \textit{orbitally unstable} in $X$.
\end{definition}

In the framework of \eqref{NLS}, the space $X$ in Definition~\ref{dsta} will be
taken as the energy space $H^{1}(\dg)$, endowed with the action of
$-\Delta$ on the domain $D(\mathcal{H}_Z^{\delta'})$.

The (in)stability of the discontinuous standing waves profiles constructed in Theorem \ref{thm:existence}  is investigated through the  framework developed by Grillakis, Shatah, and Strauss (GSS) \cite{GSS2}. In order to apply the GSS approach, one must rely on a suitable local and/or global well-posedness theory for the nonlinear Schr\"odinger flow on the graph. 
Such a local well-possedness theory is already available in the setting of $\delta'$ interactions, having been established by Angulo and Goloshchapova in \cite{AngGol17b} for $p>1$ (see also \cite{G}). The global well-posedness is obtained for $1<p<5$ by extending the local theory through the standard continuation argument based on conserved quantities, namely the energy and charge functionals
\begin{equation}\label{Ener}
E_Z(\mathbf{U}) = \frac{1}{2} \|\nabla \mathbf{U}\|^2_{L^2(\dg)} - \frac{1}{p+1} \|\mathbf{U}\|^{p+1}_{L^{p+1}(\dg)} + \frac{1}{2Z} \left| \sum_{i=1}^N \psi_i(L) \right|^2, 
\qquad Z \neq 0 \quad\quad\quad (\text{energy}),
\end{equation}
and
\begin{equation}\label{mass}
Q(\mathbf{U}) = \|\mathbf{U}\|^2_{L^2(\dg)}, \hskip3.3in (\text{mass}),
\end{equation}
both naturally defined in the energy space $H^1(\dg)$. 

\medskip
The spectral ingredients required by the GSS - method are developed in Section~\ref{sec:spectral} for $p=3$, where we carry out a detailed analysis of the linearized operators arising from the second variation of the action functional 
$$
\mathbf{S}(\mathbf{U})=E_Z(\mathbf{U})+\omega Q(\mathbf{U}),
\qquad \mathbf{U}\in H^1(\dg),
$$
around the stationary profiles $\Theta_{\omega,n}$, namely, the Morse and nullity indices of $\mathbf{S}''(\Theta_{\omega,n})$. The Morse-index analysis identifies all the directions of instability and we classify them in terms of relations between $N\ge 2$, $n\in\{1,\dots,N-1\}$ and the size of frequency $\omega>\omega_n^*$ (see Theorem \ref{thm:morse1} and Table~\ref{table:2} in Appendix D). A sharp description of the negative-eigenvalue count  (Morse index) of the scalar Neumann operator on a half-line
\begin{equation}\label{eq:Lplus-halfline-p-intro}
\mathfrak L_{+,a}
:=
-\partial_x^2+\omega-3\,\Psi^{2}_{a}(x),
\qquad
D(\mathfrak L_{+,a})
=
\{u\in H^2([L,\infty)):\ u'(L)=0\},
\end{equation}
for any shift $a>0$ and $\omega>0$ is required in our spectral analysis and therefore recorded in Lemma \ref{lem:LA-negative-generalp} in the Appendix. More precisely, $n(\mathfrak L_{+,a})=1$ for $a\in (0, a_*)$ and $n(\mathfrak L_{+,a})=0$ for $a\geqq a_*$, where $a_*=\operatorname{arctanh}(\tfrac{1}{\sqrt{2}})$ with $\Psi''_{a_*}(L)=0$. Note $a_*$ is also exactly the shift corresponding to the continuous inversion of the shift types that happens at frequency $\widehat{\omega}$ when $2n>N$ (see Theorem \ref{thm:existence}). Based on an analysis on the half-line, the nullity index is obtained via a quadratic-form analysis and  Sturm-Liouville theory (see Propositions \ref{prop:l2} and \ref{prop:ker-L1Z}). {\it We note that this  spectral analysis remains unchanged for any $p > 1$ and  for any {\it a priori} discontinuous-at-the-vertex standing waves profiles in $D(\mathcal{H}_Z^{\delta'})$ for the NLS  \eqref{NLS} (see Section \ref{sec:spectral-generalp}}.

Our spectral approach is based on the useful orthogonal direct sum decomposition of $L^2(\dg)$ given for $n\in \{1,\dots,N-1\}$ by
\begin{equation}
    \label{eq:l2orthintro} L^2(\dg)
=
X_{\mathrm{mean}}(n)\ \oplus\ X_{1,\perp}(n)\ \oplus\ X_{N,\perp}(n),
\end{equation}
where for the indices sets $
A\equiv A(n):=\{1,\dots,n\}$, $B\equiv B(n):=\{n+1,\dots,N\}$ one has two decoupling subspaces \[
\begin{split}
    X_{1,\perp}(n):=\Big\{\mathbf v\in L^2(\dg) \Bigr|\, v_j\equiv 0\ \forall j\in B,\;
\sum_{j\in A} v_j(x)\equiv 0\Big\}, \\
X_{N,\perp}(n)=\Big\{\mathbf v\in L^2(\dg)\Bigr|\, v_j\equiv0\ \forall j\in A,\ \ 
\sum_{j\in B} v_j(x)=0\Big\}
\end{split}
\] and a coupling space $$X_{\mathrm{mean}}(n):=\Big\{\mathbf v\in L^2(\dg)\Bigr|\,   v_1\equiv \cdots \equiv v_n,\;\;
v_{n+1}\equiv \cdots \equiv v_N\Big\}.$$ In this form, we split our spectral study of  $\mathbf{S}''(\Theta_{\omega,n})$ to each of the spaces $D(\mathcal{H}_Z^{\delta'})\cap X_{\mathrm{mean}}(n)$, $D(\mathcal{H}_Z^{\delta'})\cap X_{1,\perp}(n)$ and $D(\mathcal{H}_Z^{\delta'})\cap X_{N,\perp}(n)$.
 The application of the GSS criterion in Theorem~\ref{2main} is then reduced to the sign analysis of the slope $\partial_\omega Q(\Theta_{\omega,n})$ along the existence curve. 
The map $\omega\mapsto Q(\Theta_{\omega,n})$ is smooth on 
$(\omega_n^*,\infty)$, but its monotonicity depends on the geometry of the 
configuration $\Theta_{\omega,n}$. From Theorem~\ref{thm:slope-condition} (summarized in Table~\ref{table:3} in Appendix D)
the sign of the slope $\partial_\omega Q(\Theta_{\omega,n})$ on $(\omega_n^*,\infty)$
also depends on the cluster configuration: when $2n\ge N$, the slope is strictly positive throughout $(\omega_n^*,\infty)$, while when $2n<N$, the slope is strictly negative just above the existence threshold
and becomes strictly positive for large $\omega$. Hence there is an intermediate
frequency $\omega_n^\sharp>\omega_n^*$ (see \eqref{velha}) at which the slope vanishes and changes sign,
separating the instability regime $\omega_n^*<\omega<\omega_n^\sharp$ from the
(potentially) stable regime $\omega>\omega_n^\sharp$ (see Tables \ref{table:1} and \ref{table:3} in Appendix D).

\smallskip
In Section~\ref{sec:mainproof} we prove the following main (in)stability result for the cubic-NLS   as an application of Propositions~\ref{prop:l2}, \ref{prop:ker-L1Z}, Theorem~\ref{thm:morse1}, and Theorem~\ref{2main} in Appendix.

\begin{theorem}\label{thm:insta}
Let $Z<0$, $N\ge 2$ and $n\in\{1,\dots,N-1\}$ fixed. Suppose $\omega>\omega_n^*$ and  $\omega_n^\sharp$ defined in \eqref{velha}. Then the standing wave 
$e^{i\omega t}\Theta_{\omega,n}$ for the NLS equation \eqref{NLS} provided by 
Theorem~\ref{thm:existence} is: 
\begin{enumerate}
\item orbitally stable in $H^1(\dg)$ in the cases \begin{enumerate}
    \item $n=1$ and $N=2$.
    \item $n=1$, $N>2$ and $\omega>\omega_n^\sharp$. 
\end{enumerate}
\item orbitally stable in $\xm\cap H^1(\dg)$ in the cases \begin{enumerate}
    \item $2n=N$.
    \item $2n>N$ and $\omega>2N^2/Z^2.$
    \item $2n<N$ and $\omega>\omega_n^\sharp$. 
\end{enumerate}
\item orbitally unstable in $H^1(\dg)$ in the cases \begin{enumerate}
    \item $2n=N$, $N>2$.
    \item $2n<N$, $N>2$, $n\ge 2$ and $\omega>\omega_n^\sharp$.
    \item $2n<N$ and $\omega<\omega^\sharp_n$.
    \item $2n>N$, $N>2$ and $\omega>2N^2/Z^2$. 
    \item $2n>N$ and $\omega<2N^2/Z^2$.
\end{enumerate}
\end{enumerate}
\end{theorem}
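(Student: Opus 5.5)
The proof will be an application of the GSS framework, packaged as Theorem~\ref{2main} in the Appendix. That framework combines three ingredients. The first is the nullity of $\mathbf S''(\Theta_{\omega,n})$, which is of the form $\mathcal{L}_{+,Z}\oplus\mathcal{L}_{-,Z}$ acting on the real and imaginary parts. The second is the Morse index $n(\mathcal{L}_{+,Z})$ (with $\mathcal{L}_{-,Z}\ge 0$). The third is the sign of $\partial_\omega Q(\Theta_{\omega,n})$. Writing $p(\omega)=1$ if $\partial_\omega Q>0$ and $0$ otherwise, we will use two standard criteria. If $\ker \mathcal{L}_{+,Z}=\{0\}$ and $\ker\mathcal{L}_{-,Z}=\mathrm{span}\{\Theta_{\omega,n}\}$, then stability holds when $n(\mathcal{L}_{+,Z})=p(\omega)$. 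Instability holds when $n(\mathcal{L}_{+,Z})-p(\omega)$ is odd; this is the Grillakis–Jones count. When the difference is even and nonzero, we will instead use the extra negative directions living in the decoupled subspaces, as explained below.

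The first step is to record the spectral input. Propositions~\ref{prop:l2} and \ref{prop:ker-L1Z} give the trivial kernel of $\mathcal{L}_{+,Z}$ for $\omega\neq\widehat\omega$ and the kernel $\mathrm{span}\{\Theta_{\omega,n}\}$ of $\mathcal{L}_{-,Z}$. Theorem~\ref{thm:morse1} gives the Morse index, split along the decomposition \eqref{eq:l2orthintro}. On $X_{1,\perp}(n)$ and $X_{N,\perp}(n)$ the operator decouples into copies of the Neumann operator $\mathfrak L_{+,a}$, with multiplicities $n-1$ and $N-n-1$. By Lemma~\ref{lem:LA-negative-generalp}, each copy with a type-$A$ shift ($a<a_*$) contributes one negative eigenvalue and each copy with a type-$B$ shift contributes none. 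On $X_{\mathrm{mean}}(n)$ there is a two-component coupled problem, whose Morse index should be $1$. Combining these with Theorem~\ref{thm:existence}, we get:
\begin{itemize}
\item when type $A$ sits on the $n$-group, that is for $2n\le N$, or for $2n>N$ with $\omega>\widehat\omega$, we have $n(\mathcal{L}_{+,Z})=1+(n-1)=n$;
\item when $2n>N$ and $\omega<\widehat\omega$, we have $n(\mathcal{L}_{+,Z})=1+(N-n-1)=N-n$.
\end{itemize}

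The second step brings in the slope from Theorem~\ref{thm:slope-condition}: $\partial_\omega Q>0$ if $2n\ge N$, while for $2n<N$ the slope is negative on $(\omega_n^*,\omega_n^\sharp)$ and positive beyond. The cases then follow.
\begin{itemize}
\item For $n=1$ the Morse index is $1$. If $N=2$, then $2n=N$ and $p=1$ on the whole range, so the standing wave is stable. If $N>2$, stability holds once $\omega>\omega_n^\sharp$.
\item In $X_{\mathrm{mean}}\cap H^1$, which is invariant under the flow because the NLS preserves the symmetry of equal components within each group, only the mean part of the Morse index counts, namely $1$. So wherever $p=1$ the wave is stable. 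This covers cases 2(a)–(c), using $\omega>\widehat\omega$ in 2(b) only for the kernel nondegeneracy and the configuration.
\item For instability with $p=0$, that is $2n<N$ and $\omega<\omega_n^\sharp$, the Morse index $n$ is positive. If $n$ is odd, Grillakis–Jones applies directly. If $n$ is even, we argue on the invariant subspace below.
\end{itemize}

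For the remaining instability cases, where $p=1$ and the Morse index is $n\ge2$ or $N-n\ge 1$ with extra negative directions in a $\perp$-space, the plan is to restrict the flow to a flow-invariant subspace in which the count becomes odd. Such a subspace consists of functions equal on a suitably chosen subset of edges, for instance symmetric under permutations fixing all but two edges of the type-$A$ group. On it the Morse index is exactly $2$ (one mean direction plus one $\perp$ direction) and $p=1$, so the difference is $1$ and Grillakis–Jones gives instability in that subspace, hence in $H^1(\dg)$. In the case $2n>N$, $\omega<\widehat\omega$, the type-$A$ shift lies on the $(N-n)$-group, and we use instead two edges of that group, which requires $N-n\ge2$. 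When $N-n=1$ we have $n=N-1$, and the count $1-p=0$ fails; we will need to check this subcase separately, perhaps via the mean space.

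The main obstacle is making this restriction rigorous. We must verify that the restricted subspaces are invariant, that $D(\mathcal{H}_Z^{\delta'})$ intersected with them carries the same $\delta'$ condition, and that the Morse count on them is exactly $2$ in every case listed in part (3), including the edge cases $n=1$ or $N-n=1$ just mentioned.
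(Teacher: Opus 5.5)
\textbf{Gap in case (3)(e).} Your treatment of case (3)(e) ($2n>N$, $\omega<\widehat\omega$) rests on a wrong Morse count. You assume that the coupled two-component problem on $X_{\mathrm{mean}}\cap D(\mathcal H_Z^{\delta'})$ always has Morse index $1$. Theorem~\ref{thm:morse1}(2) shows it is $2$ in this regime. In the scalar equation $F(\lambda)=Z$ the unique pole now sits at $\lambda_N$, and
\[
F(0)-Z=-\frac{\mathfrak H(t_1)}{\sqrt\omega\,t_1t_N(t_1^2-t_N^2)}<0,
\]
so the right branch $(\lambda_N,0)$ also crosses the level $Z$. Hence $n(\mathcal L_{+,Z})=N-n+1$, not $N-n$.

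This breaks both subcases of your plan. For $n=N-1$ you have no argument, as you acknowledge. For $N-n\ge 2$, the subspace in which you split the type-$A$ cluster actually carries Morse index $2+1=3$. With $\rho(\omega)=1$ (your $p(\omega)$) the difference is even, and the parity criterion says nothing.

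The missing observation closes the case uniformly. On the flow-invariant space $X_{\mathrm{mean}}\cap H^1(\mathcal G)$ the Morse index is $2$ and $\rho(\omega)=1$. Theorem~\ref{2main} with Remark~\ref{lintononlin} then gives instability there, hence in $H^1(\mathcal G)$, for every $n$ with $2n>N$. The same misconception shows in your comment on 2(b): $\omega>\widehat\omega$ is needed there because below $\widehat\omega$ the mean block has Morse index $2$.

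\textbf{Case (3)(c) with $n$ even.} You defer to the subspace of your last step, but there the Morse index is $2$ and $\rho(\omega)=0$, again an even difference. Use $X_{\mathrm{mean}}$ instead, where the Morse index is $1$ and $\rho(\omega)=0$. This is the paper's argument, and it needs no parity split.

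\textbf{Cases (3)(a), (b), (d).} Restricting to a permutation-symmetric invariant subspace is a legitimate alternative to the paper's route. The paper invokes Grillakis' count $n(P\mathcal L_{+,Z})-n(P\mathcal L_{-,Z}^{-1})=n-1\ge 1$ of positive eigenvalues of $J\mathcal H$, then Remark~\ref{lintononlin}. Your route needs only the GSS parity criterion, but the subspace must be chosen correctly.

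Take $Y=\{v_2=\cdots=v_n,\ v_{n+1}=\cdots=v_N\}$. It contains $\Theta_{\omega,n}$. It is invariant under the flow, because the $\delta'$ conditions and the nonlinearity commute with edge permutations and solutions are unique. It is also invariant under $\mathcal L_{\pm,Z}$. Its negative directions for $\mathcal L_{+,Z}$ are the mean one and $(-(n-1)\chi_1,\chi_1,\dots,\chi_1,0,\dots,0)$. So the Morse index on $Y$ is exactly $2$, and the difference with $\rho(\omega)=1$ is odd.

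Invariance under a single transposition of two type-$A$ edges, as your wording suggests, is not enough. It gives Morse index $n-1$ and does not change the parity.
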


For the reader's convenience, in Appendix~\ref{ap:D}, together with the Morse index and slope condition analysis, we summarize the cases present in Theorem~\ref{thm:insta} in a comprehensive table, Table \ref{table:1}, including information on why our approach is inconclusive in some of the threshold  values $\omega_n^\sharp$, $\omega_n^*$ and $\omega=2N^2/Z^2.$

\medskip
As a rough explanation of the conclusion of Theorem~\ref{thm:insta}, we note the inflection point of the soliton is geometrically degenerate, it is placed at the one shift where the soliton derivative is ``flat'' at the vertex and that flatness is exactly what makes the linearized operator lose the spectral information the GSS method needs. Whenever a configuration drifts toward $a_*$, our tools go silent. This happens at the transition frequency $\widehat{\omega}=2N^2/Z^2$ (where for the unbalanced case $2n>N$ the type-$A$ and type-$B$ groups literally swap roles by sliding through the inflection point and momentarily coinciding) and at the existence threshold $\omega_n^*$ (where the shifts are pinned to a critical point of the auxiliary function $\alpha_n$ and the analysis again degenerates). Away from these special frequencies the geometry is ``generic'' and the broad lesson of the stability tables is that discontinuous clustering is overwhelmingly destabilizing: almost every configuration we can decide is orbitally unstable in $H^1(\dg)$. The single exception is the most lopsided split of all: $n=1$ (one half-line wearing one profile type and all $N-1$ others sharing the opposite type) which becomes orbitally stable once the frequency climbs past the intermediate value $\omega_n^\sharp$ (for $N=2$ this threshold already coincides with the existence threshold $\omega_n^*$, so stability holds throughout). In short, splitting the vertex into two genuinely different clusters almost always breaks the wave, and the lone survivor is the configuration that is as close as possible to not splitting at all.

For the case $p>1$, $p\neq 3$, in Section ~\ref{sec:generalp}  we present the construction of discontinuous-at-the vertex stationary solutions (Theorem \ref{thm:existence-p})  and an associated  (in)stability  theory (Theorem \ref{thm:insta-highfreq-p}) with  some restrictions on the frequency $\omega$ (see Remark \ref{rem:generalp-remarks}).

\medskip
\noindent\textbf{Organization of the article}. In Section~\ref{sec:not} we describe some basic notation. In Section~\ref{sec:local} we develop a useful decomposition of $L^2(\dg)$ and establish the local and/or global well-posedness theory. Section~\ref{sec:exists} is devoted to the existence of discontinuous standing 
waves in $D(\mathcal{H}_Z^{\delta'})$ and the proof of Theorem~\ref{thm:existence}. 
Section~\ref{sec:spectral} carries out the spectral analysis of the linearized 
operators $\mathcal L_{\pm,Z}$ obtained from the second variation $\mathbf{S}''(\Theta_{\omega,n})$ of the action functional $\mathbf{S}$: positivity and kernel of $\mathcal L_{-,Z}$ 
(Proposition~\ref{prop:l2}), triviality of $\ker(\mathcal L_{+,Z})$ 
(Proposition~\ref{prop:ker-L1Z}), and the Morse-index analysis
(Theorem~\ref{thm:morse1}).
Section~\ref{sec:insta} establishes the slope condition along the existence 
branch profiles (Theorem~\ref{thm:slope-condition}), in Section~\ref{sec:mainproof}  we proves the main (in)stability result, 
Theorem~\ref{thm:insta}. In Section~\ref{sec:generalp} we exhibit the main differences on the analysis for general values of $p>1$, $p\neq 3$. In Appendices~\ref{ap:1} and \ref{ap:2} we present a crucial study of half-line operators in terms of their Morse index and their regularity with respect to the $H^2$-eigenvalues. In Appendix~\ref{ApA} we recall the GSS approach for (in)stability analysis. Finally Appendix~\ref{ap:D} summarized the main results for $p=3$ in fast-reference tables.

\section{Notation}\label{sec:not}
Let $-\infty\leq a<b\leq\infty$. We denote by $L^2(a,b)$  the  Hilbert space equipped with the inner product $(u,v)=\int\limits_a^b u(x)\overline{v(x)}dx$.  By $H^n(\Omega)$  we denote the classical  Sobolev spaces on $\Omega\subset \mathbb R$ with the usual norm. For a metric star graph $\dg$, we define the  $ L^ p(\dg)$-spaces by $
 L^ p(\dg)= \bigoplus_j L^p( L, +\infty)$, $p>1$, with the natural norms. For $\textbf{U}=(g_1,\dots,g_N), \textbf{V}=(\tilde{g}_1,\dots,\tilde{g}_N) \in L^2(\dg)$,  the natural inner product in $L^2(\dg)$  is defined by
 $\langle\textbf{U}, \textbf{V}\rangle= \sum_{j=1}^N \int_L^{\infty} g_j(x)\overline{\tilde{g}_j(x)}dx$.  For any $n\geqq 0$, we have the  $H^n(\dg)$-Sobolev spaces,
 $
 H^n(\dg)=\bigoplus_{j=1}^ N H^ n( L, +\infty).
 $ 

 Let $T$ be a  closed densely defined symmetric operator in the Hilbert space $H$. The domain of $T$ is denoted by $D(T)$. $\sigma(T)$ represents the spectrum of $T$, $\sigma_{ess}(T)$ the essential spectrum of $T$. With $T^*$ we denote the adjoint operator of $T$.  The number of negative eigenvalues counting multiplicities (or Morse index) of $T$ is denoted by  $n(T)$.

\section{Preliminaries}\label{sec:local}
We begin this section introducing a helpful decomposition of the space $L^2(\dg)$, which will be a fundamental piece in our spectral study in Section  \ref{sec:spectral}.
\subsection{Orthogonal decomposition}
For $n\in\{1,\dots,N-1\}$ fixed define the indices sets
\[
A\equiv A(n):=\{1,\dots,n\},\qquad B\equiv B(n):=\{n+1,\dots,N\}.
\] 
We introduce the following $n$-dependent three subspaces of $L^2(\dg)$:

\begin{align*}
X_{\mathrm{mean}} \equiv X_{\mathrm{mean}}(n)
&:=\Big\{\mathbf v=(v_j)\in L^2(\dg) \Bigm|\  v_1\equiv \cdots \equiv v_n,\;\;
v_{n+1}\equiv \cdots \equiv v_N\Big\},\\[2mm]
X_{1,\perp}\equiv X_{1,\perp}(n)
&:=\Big\{\mathbf v=(v_j)\in L^2(\dg) \Bigm| \ v_j\equiv 0\ \forall j\in B,\ \ 
\sum_{j\in A} v_j(x)\equiv 0\Big\},\\[2mm]
X_{N,\perp}\equiv X_{N,\perp}(n)
&:=\Big\{\mathbf v=(v_j)\in L^2(\dg)\Bigm|\  v_j\equiv0\ \forall j\in A,\ \ 
\sum_{j\in B} v_j(x)=0\Big\}.
\end{align*}

Note in case the cardinality of $A$ or $B$ is one, then the associated decoupling space degenerates to $X_{j,\perp}=\{0\}$, $j=1$ or $j=N$ respectively. We may refer to the spaces $\xa$ and $\xb$ as the \textit{decoupling} spaces.

\begin{lemma}
\label{lem:cluster-decomp}
Let $n\in \{1,\dots,N-1\}$ be fixed. The Hilbert space $L^2(\dg)$ admits the orthogonal direct sum decomposition
\begin{equation}
    \label{eq:l2orth} L^2(\dg)
=
X_{\mathrm{mean}}\ \oplus\ X_{1,\perp}\ \oplus\ X_{N,\perp}.
\end{equation}
\end{lemma}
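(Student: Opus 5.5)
The plan is to write down an explicit decomposition of an arbitrary element of $L^2(\dg)$, show it lands in the three subspaces, and then check that the subspaces are mutually orthogonal; uniqueness of the decomposition then follows from orthogonality. Since $B=\{n+1,\dots,N\}$ has cardinality $N-n$, all formulas below are pointwise in $x\ge L$ and linear.

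Fix $\mathbf v=(v_j)_{j=1}^N\in L^2(\dg)$ and define the cluster averages
\[
m_A(x):=\frac1n\sum_{j\in A}v_j(x),\qquad m_B(x):=\frac1{N-n}\sum_{j\in B}v_j(x).
\]
By Cauchy--Schwarz, $|m_A|^2\le \frac1n\sum_{j\in A}|v_j|^2$, so $m_A,m_B\in L^2(L,\infty)$. Set
\begin{itemize}
\item $\mathbf v^{\mathrm{mean}}:=(m_A,\dots,m_A,m_B,\dots,m_B)$, which lies in $\xm$;
\item $\mathbf v^{1}$ with components $v_j-m_A$ for $j\in A$ and $0$ for $j\in B$, which lies in $\xa$ because $\sum_{j\in A}(v_j-m_A)=0$;
\item $\mathbf v^{N}$ with components $0$ for $j\in A$ and $v_j-m_B$ for $j\in B$, which lies in $\xb$ by the analogous identity.
\end{itemize}
Clearly $\mathbf v=\mathbf v^{\mathrm{mean}}+\mathbf v^1+\mathbf v^N$.

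For orthogonality, take $\mathbf u\in\xm$ with common values $u_A,u_B$ and $\mathbf w\in\xa$. Then
\[
\langle \mathbf u,\mathbf w\rangle=\int_L^\infty u_A\,\overline{\textstyle\sum_{j\in A}w_j}\,dx=0 .
\]
The same computation gives $\xm\perp\xb$. Finally $\xa\perp\xb$ because their elements have disjoint supports in the edge index: one vanishes on $B$, the other on $A$.

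Orthogonality gives uniqueness: if $0=\mathbf a+\mathbf b+\mathbf c$ with the three summands in the three spaces, pairing with each summand forces it to vanish. Each subspace is closed, being defined by pointwise linear constraints preserved under $L^2$ limits along a.e.-convergent subsequences. Hence the sum is an orthogonal direct sum of closed subspaces equal to $L^2(\dg)$.

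No step is a real obstacle. The only care needed is the degenerate case $n=1$ or $N-n=1$: there, for example, $v_1-m_A=0$, so $\mathbf v^1=0$, consistent with $\xa=\{0\}$.
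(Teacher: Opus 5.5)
Your proof is correct and follows essentially the same route as the paper: completeness via the cluster averages $\bar v_A,\bar v_B$, and the same pairwise orthogonality computations. Your additional remarks are valid refinements of points the paper leaves implicit, namely that the averages lie in $L^2$, uniqueness via orthogonality, closedness of the subspaces, and the degenerate cases $n=1$ or $N-n=1$.
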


\begin{proof}
We proceed in two steps.

\smallskip
\noindent\textbf{Orthogonality.}
Let $\mathbf v=(v_j)\in X_{\mathrm{mean}}$ and $\mathbf w=(w_j)\in X_{1,\perp}$. Then
\[
\langle \mathbf v,\mathbf w\rangle_{L^2(\dg)}
=\sum_{j\in A}\int_L^\infty v_1(x)\overline{w_j(x)}\,dx
=\int_L^\infty v_1(x)\overline{\sum_{j\in A} w_j(x)}\,dx=0,
\]
since $\sum_{j\in A}w_j(x)=0$. The same argument shows that
$X_{\mathrm{mean}}\perp X_{N,\perp}$.

Moreover, if $\mathbf v\in X_{1,\perp}$ and $\mathbf w\in X_{N,\perp}$, then
\[
\langle \mathbf v,\mathbf w\rangle_{L^2(\dg)}
=\sum_{j=1}^N\int_L^\infty v_j(x)\overline{w_j(x)}\,dx=0,
\]
since $\mathbf v$ vanishes on $B$ and $\mathbf w$ vanishes on $A$.
Hence the three subspaces are pairwise orthogonal.

\smallskip
\noindent\textbf{Completeness.}
Let $\mathbf v=(v_1,\dots,v_N)\in L^2(\dg)$ be arbitrary. Define the averages
\[
\bar v_A(x):=\frac1n\sum_{j\in A} v_j(x),\qquad
\bar v_B(x):=\frac1{N-n}\sum_{j\in B} v_j(x),
\]
and set
\[
\mathbf v_{\mathrm{mean}}:=
(\bar v_A,\dots,\bar v_A,\bar v_B,\dots,\bar v_B),
\]
where $\bar v_A$ is repeated $n$ times and $\bar v_B$ is repeated $N-n$ times.

Define further
\[
\mathbf v_{A,\perp}:=(v_1-\bar v_A,\dots,v_n-\bar v_A,0,\dots,0),
\]
and
\[
\mathbf v_{B,\perp}:=(0,\dots,0,v_{n+1}-\bar v_B,\dots,v_N-\bar v_B).
\]
Then clearly
\[
\mathbf v=\mathbf v_{\mathrm{mean}}+\mathbf v_{A,\perp}+\mathbf v_{B,\perp}.
\]
Moreover,
\[
\sum_{j\in A} (v_j-\bar v_A)=0,\qquad \sum_{j\in B} (v_j-\bar v_B)=0,
\]
so $\mathbf v_{A,\perp}\in X_{1,\perp}$ and $\mathbf v_{B,\perp}\in X_{N,\perp}$, while
$\mathbf v_{\mathrm{mean}}\in X_{\mathrm{mean}}$ by construction.
\end{proof}

\begin{lemma}
\label{lem:domain-decomp-deltaprime}
Let $P_{\mathrm{mean}},P_{1,\perp},P_{N,\perp}$ be the orthogonal projections from $L^2(\dg)$ onto the respective subspaces in \eqref{eq:l2orth}
given component-wise for $\mathbf v=(v_j)$, with $\bar v_A$ and $\bar v_B$ defined above, by
\begin{align}\label{eq:projs}
\nonumber(P_{\mathrm{mean}}\mathbf v)_j(x)
&=
\begin{cases}
\bar v_A(x), & j\in A,\\
\bar v_B(x), & j\in B,
\end{cases}
\\[1mm]
(P_{1,\perp}\mathbf v)_j(x)
&=
\begin{cases}
v_j(x)-\bar v_A(x), & j\in A,\\
0, & j\in B,
\end{cases}
\\[1mm]
\nonumber(P_{N,\perp}\mathbf v)_j(x)
&=
\begin{cases}
0, & j\in A,\\
v_j(x)-\bar v_B(x), & j\in B.
\end{cases}
\end{align}
Then $D(\mathcal H^{\delta'}_Z)$ is invariant by the projections above. Moreover, 
$D(\mathcal H^{\delta'}_Z)$ decomposes as the orthogonal direct sum (with the possibility of at most one of the decoupling spaces being zero):
\begin{equation}
    \label{eq:decompdom}
    D(\mathcal H^{\delta'}_Z)
=
\big(D(\mathcal H^{\delta'}_Z)\cap X_{\mathrm{mean}}\big)
\ \oplus\
\big(D(\mathcal H^{\delta'}_Z)\cap X_{1,\perp}\big)
\ \oplus\
\big(D(\mathcal H^{\delta'}_Z)\cap X_{N,\perp}\big),
\end{equation}
and for each $\mathbf v\in D(\mathcal H^{\delta'}_Z)$ we have the unique decomposition
\[
\mathbf v=P_{\mathrm{mean}}\mathbf v+P_{1,\perp}\mathbf v+P_{N,\perp}\mathbf v
\quad\text{with pairwise orthogonal summands in }L^2(\dg).
\]

\end{lemma}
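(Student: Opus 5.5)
The plan is to verify by hand that each projection in \eqref{eq:projs} maps $D(\mathcal H^{\delta'}_Z)$ into itself. Once this holds, the decomposition \eqref{eq:decompdom} follows directly from Lemma~\ref{lem:cluster-decomp}. Fix $\mathbf v=(v_j)\in D(\mathcal H^{\delta'}_Z)$. Then $v_j\in H^2(L,\infty)$, there is a common value $\gamma:=v_1'(L)=\cdots=v_N'(L)$, and $\sum_{j=1}^N v_j(L)=Z\gamma$.

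\textbf{Regularity.} Averages of $H^2$ functions lie in $H^2$, so $\bar v_A,\bar v_B\in H^2(L,\infty)$. Hence every component of $P_{\mathrm{mean}}\mathbf v$, $P_{1,\perp}\mathbf v$ and $P_{N,\perp}\mathbf v$ is in $H^2$.

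\textbf{Derivative condition.} At the vertex we have $\bar v_A'(L)=\frac1n\sum_{j\in A}v_j'(L)=\gamma$, and likewise $\bar v_B'(L)=\gamma$. It follows that all derivatives of $P_{\mathrm{mean}}\mathbf v$ at $L$ equal $\gamma$. For $P_{1,\perp}\mathbf v$, the $A$-components have derivative $v_j'(L)-\bar v_A'(L)=\gamma-\gamma=0$, and the $B$-components vanish identically. So all derivatives at $L$ equal $0$. The same computation applies to $P_{N,\perp}\mathbf v$.

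\textbf{Sum condition.} For the mean part,
\[
\sum_{j=1}^N (P_{\mathrm{mean}}\mathbf v)_j(L)=n\bar v_A(L)+(N-n)\bar v_B(L)=\sum_{j=1}^N v_j(L)=Z\gamma,
\]
which is $Z$ times its common derivative. For $P_{1,\perp}\mathbf v$ the vertex sum is $\sum_{j\in A}(v_j(L)-\bar v_A(L))=0=Z\cdot 0$, and again the same holds for $P_{N,\perp}\mathbf v$. All three projections therefore map $D(\mathcal H^{\delta'}_Z)$ into itself.

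\textbf{Conclusion.} By Lemma~\ref{lem:cluster-decomp}, every $\mathbf v\in D(\mathcal H^{\delta'}_Z)$ satisfies $\mathbf v=P_{\mathrm{mean}}\mathbf v+P_{1,\perp}\mathbf v+P_{N,\perp}\mathbf v$, with pairwise $L^2$-orthogonal summands. The invariance just shown places each summand in $D(\mathcal H^{\delta'}_Z)\cap X_\bullet$ for the corresponding subspace, which gives \eqref{eq:decompdom}. Uniqueness follows from orthogonality: a vector lying in all three summands' sum with zero total has each piece orthogonal to the others, so each piece is zero. If $n=1$ or $N-n=1$, the corresponding decoupling space is $\{0\}$ and its projection is zero, so at most one of the decoupling spaces is zero, as claimed. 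No step is a real obstacle; the only point requiring care is that the derivative condition is preserved because the common value $\gamma$ is an average of equal numbers.
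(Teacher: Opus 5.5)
Your proposal is correct and follows the paper's proof essentially verbatim. It checks derivative matching and the vertex sum for each projection, with common value $\gamma$ for the mean part and $0$ for the decoupling parts, then invokes Lemma~\ref{lem:cluster-decomp}; your explicit $H^2$ check and orthogonality-based uniqueness argument are details the paper leaves implicit. One small slip is your closing claim that at most one decoupling space is zero, which fails when $N=2$: there $n=1=N-n$, so both are $\{0\}$ (the paper's parenthetical has the same inaccuracy), though this does not affect the decomposition itself.
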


\begin{proof}
 Let $\mathbf v\in D(\mathcal H^{\delta'}_Z)$ and set $d:=v_1'(L)=\cdots=v_N'(L)$. 
First, for $P_{\mathrm{mean}}\mathbf v$ we compute
\[
(P_{\mathrm{mean}}\mathbf v)_j'(L)
=
\begin{cases}
\frac1n\sum_{k\in A} v_k'(L)=d, & j\in A,\\[1mm]
\frac1{N-n}\sum_{k\in B} v_k'(L)=d, & j\in B,
\end{cases}
\]
so derivative matching holds with the same common derivative $d$. Furthermore,
\begin{equation*}
    \begin{split}
        \sum_{j=1}^N (P_{\mathrm{mean}}\mathbf v)_j(L)
&=
\sum_{j\in A}\bar v_A(L)+\sum_{j\in B}\bar v_B(L)
\\&=
\sum_{j\in A} v_j(L)+\sum_{j\in B} v_j(L)
=
\sum_{j=1}^N v_j(L)=Z\,d,
    \end{split}
\end{equation*}
and since $(P_{\mathrm{mean}}\mathbf v)_1'(L)=d$ we conclude $P_{\mathrm{mean}}\mathbf v\in D(\mathcal H^{\delta'}_Z)$.

\smallskip
Next consider $P_{1,\perp}\mathbf v$. For $j\in A$,
\[
(P_{1,\perp}\mathbf v)_j'(L)=v_j'(L)-\bar v_A'(L)=d-d=0,
\]
while for $j\in B$ we have $(P_{1,\perp}\mathbf v)_j\equiv 0$ and thus also $(P_{1,\perp}\mathbf v)_j'(L)=0$.
Hence derivative matching holds with common value $0$. Moreover,
\[
\sum_{j=1}^N (P_{1,\perp}\mathbf v)_j(L)
=\sum_{j\in A}\big(v_j(L)-\bar v_A(L)\big)
=\sum_{j\in A}v_j(L)-n\bar v_A(L)=0,
\]
so the $\delta'$ relation becomes $0=Z\cdot 0$, and therefore $P_{1,\perp}\mathbf v\in D(\mathcal H^{\delta'}_Z)$.
The proof for $P_{N,\perp}\mathbf v$ is identical.

\smallskip
Now, since $L^2(\dg)=X_{\mathrm{mean}}\oplus X_{1,\perp}\oplus X_{N,\perp}$ orthogonally and
$\mathbf v=P_{\mathrm{mean}}\mathbf v+P_{1,\perp}\mathbf v+P_{N,\perp}\mathbf v$ for every $\mathbf v\in L^2(\dg)$, 
the invariance of the subspaces imply that the same decomposition holds on $D(\mathcal H^{\delta'}_Z)$ with summands in the intersections
$D(\mathcal H^{\delta'}_Z)\cap X_{\mathrm{mean}}$, $D(\mathcal H^{\delta'}_Z)\cap X_{1,\perp}$, and $D(\mathcal H^{\delta'}_Z)\cap X_{N,\perp}$.
\end{proof}

\subsection{Local and global well-posedness}
In the sequel we describe a result proved in \cite{AngGol17b} (see a more detailed description in \cite[Section~2]{G}) with regard the well-posedness of \eqref{NLS} with $\delta'$ interactions.

\begin{theorem}[\cite{AngGol17b}]\label{global}
   Let $p>1$. Then for any $\mathbf{U}_0\in H^1(\dg)$ there exists $T>0$ such that equation \eqref{NLS} has a unique solution $\mathbf{U}\in C([-T,T];H^1(\dg))$ satisfying $\mathbf{U}(0)=\mathbf{U}_0.$ For each $T_0\in (0,T)$ the mapping \[
    H^1(\dg)\ni \mathbf{U}_0 \mapsto \mathbf{U}\in C([-T_0,T_0];H^1(\dg))
    \] is continuous. In particular, for $p>2$ this map is at least $C^2$. 

    \medskip
    Moreover the conservation of energy and charge in \eqref{Ener}-\eqref{mass} holds:
    \begin{equation*}
    E_Z(\mathbf{U}(t))=E_{Z}(\mathbf{U}_0), \qquad  Q(\mathbf{U}(t))=Q(\mathbf{U}_0), \qquad t\in[-T,T]. 
    \end{equation*} Consequently, for $1<p<5$, $T$ can be taken arbitrarily large. 
\end{theorem}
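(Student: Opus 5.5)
The plan is to treat \eqref{NLS} as an abstract semilinear evolution $i\mathbf U_t=\mathcal H_Z^{\delta'}\mathbf U-|\mathbf U|^{p-1}\mathbf U$ driven by the self-adjoint operator $\mathcal H_Z^{\delta'}$ with domain \eqref{domainintro}. I will use the Duhamel formulation and a contraction argument in the energy space.

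\textbf{Step 1: the linear part.} First identify the quadratic form of $\mathcal H_Z^{\delta'}$. Integrating by parts and using $\psi_j'(L)=\frac1Z\sum_k\psi_k(L)$ for $\mathbf U\in\dom$ gives
\[
\langle \mathcal H_Z^{\delta'}\mathbf U,\mathbf U\rangle=\|\mathbf U'\|^2_{L^2(\dg)}+\frac1Z\Big|\sum_{j=1}^N\psi_j(L)\Big|^2 .
\]
This form is closed on the form domain $H^1(\dg)$, where no continuity condition at the vertex is imposed. The one-dimensional trace inequality $|\psi_j(L)|^2\le 2\|\psi_j\|_{L^2}\|\psi_j'\|_{L^2}$ shows that, for $Z<0$, the boundary term is bounded by $\varepsilon\|\mathbf U'\|^2+C_\varepsilon\|\mathbf U\|^2$. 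Hence $\mathcal H_Z^{\delta'}$ is bounded below, say by $-m+1$. Consequently $\|(\mathcal H_Z^{\delta'}+m)^{1/2}\mathbf U\|$ is a norm equivalent to the $H^1(\dg)$ norm, and the unitary group $e^{-it\mathcal H_Z^{\delta'}}$ is an isometry of $H^1(\dg)$ in this equivalent norm.

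\textbf{Step 2: local existence, uniqueness and regularity of the flow.} Componentwise, $H^1(L,\infty)\hookrightarrow L^\infty$ and $H^1$ is an algebra in one dimension. Therefore $g(\mathbf U)=|\mathbf U|^{p-1}\mathbf U$ maps $H^1(\dg)$ into itself and is Lipschitz on bounded sets, with constant depending on $\|\mathbf U\|_{H^1}^{p-1}$. No vertex compatibility is needed, since $H^1(\dg)$ carries none. A contraction in a ball of $C([-T,T];H^1(\dg))$ for
\[
\mathbf U(t)=e^{-it\mathcal H_Z^{\delta'}}\mathbf U_0+i\int_0^t e^{-i(t-s)\mathcal H_Z^{\delta'}}g(\mathbf U(s))\,ds
\]
then gives existence and uniqueness, with $T$ depending only on $\|\mathbf U_0\|_{H^1}$. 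The same argument yields the blow-up alternative and continuous dependence on the data on $[-T_0,T_0]$. For $p>2$ the map $z\mapsto|z|^{p-1}z$ is $C^2$ as a map of $\R^2$. Hence $g$ is $C^2$ from $H^1$ to $H^1$, and the implicit function theorem applied to the fixed-point map gives a $C^2$ data-to-solution map.

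\textbf{Step 3: conservation laws (the main obstacle).} This is the delicate point. The usual approach (take $\mathbf U_0\in\dom$, propagate $H^2$ regularity and differentiate $E_Z$) fails here, because $g$ does not preserve the derivative-matching condition in $\dom$. I would therefore follow Ozawa's method, which works directly with the Duhamel formula in the form domain. One writes $Q(\mathbf U(t))$ and $E_Z(\mathbf U(t))$ by expanding the Duhamel formula and uses the unitarity of the group together with the symmetry identities $\mathrm{Im}\langle g(\mathbf U),\mathbf U\rangle=0$ and $\langle g(\mathbf U),\partial_t \mathbf U\rangle$-type cancellations, interpreted in the duality $H^1$–$H^{-1}$ associated with the form. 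This avoids any use of $H^2$ regularity. As an alternative, one can regularize with $J_\varepsilon=(1+\varepsilon\mathcal H_Z^{\delta'}+\varepsilon m)^{-1}$, derive the identities for the regularized problem, and pass to the limit using continuous dependence.

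\textbf{Step 4: globalization for $1<p<5$.} On each half-line use the Gagliardo--Nirenberg inequality
\[
\|u\|_{L^{p+1}}^{p+1}\le C\|u\|_{L^2}^{\frac{p+3}{2}}\|u'\|_{L^2}^{\frac{p-1}{2}} ,
\]
where the exponent satisfies $\frac{p-1}{2}<2$ exactly when $p<5$. Combine it with the boundary estimate from Step 1 and with conservation of $Q$ and $E_Z$. This gives $\|\mathbf U'(t)\|^2\le C(E_Z(\mathbf U_0),Q(\mathbf U_0))$ by Young's inequality, so the $H^1$ norm remains bounded. The blow-up alternative then shows that $T$ can be taken arbitrarily large.
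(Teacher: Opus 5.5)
The paper gives no argument of its own here; it only refers to Theorem~3.22 of Angulo--Goloshchapova, so your outline must stand on its own. Steps~1 and~4 are correct, and for $p\ge 2$ Step~2 works as written.

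\textbf{Main gap: Step~2 for $1<p<2$.} In this range $g(\mathbf U)=|\mathbf U|^{p-1}\mathbf U$ is \emph{not} Lipschitz on bounded sets of $H^1(\dg)$. The derivative $\partial_x\bigl(g(u)-g(v)\bigr)$ contains $(g'(u)-g'(v))\,\partial_x v$, and $g'$ is only $(p-1)$-H\"older near $0$. For instance, take a fixed cutoff $\eta$ supported inside one edge, and set $u=\delta\sin(x/\delta)\,\eta$, $v=u+\delta\eta$. Both stay in a fixed $H^1$-ball with $\|u-v\|_{H^1}\sim\delta$, while $\|g(u)-g(v)\|_{H^1}\gtrsim\delta^{p-1}$. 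So the contraction in $C([-T,T];H^1(\dg))$ does not close for these $p$, and ``the same argument'' yields neither existence, uniqueness nor continuous dependence there.

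The standard repair is Kato's energy method:
\begin{enumerate}
\item Run the fixed point on a ball of $L^\infty((-T,T);H^1(\dg))$ with the $L^\infty_tL^2$ distance, which is a complete metric space.
\item Use only $|g(u)-g(v)|\lesssim(|u|^{p-1}+|v|^{p-1})|u-v|$ and $H^1\hookrightarrow L^\infty$. This gives existence, uniqueness and the blow-up alternative for every $p>1$.
\item Recover continuity of $\mathbf U_0\mapsto\mathbf U$ in $C([-T_0,T_0];H^1(\dg))$ from the conservation laws: weak convergence plus convergence of norms through $E_Z$ and $Q$.
\end{enumerate}
So your Step~3 must come \emph{before} continuous dependence, not after it.

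\textbf{The $C^2$ claim.} Your justification is too quick. A superposition operator on $H^1$ loses one order of smoothness relative to $g$. If $u\mapsto g(u)$ were $C^2$ from $H^1$ to $H^1$, its second derivative $g''(u)[h,k]$ would have to lie in $H^1$. But near a simple zero $x_0$ of $u$ one has $\partial_x g''(u)\sim|x-x_0|^{p-3}$, which is not in $L^2$ for $p\le 5/2$. With rapidly oscillating $u\in H^1$ the same failure occurs for every $p<3$. Your implicit-function argument therefore gives a $C^2$ data-to-solution map only for $p\ge3$, where $g\in C^3$. That covers the cubic case used in Theorem~\ref{thm:insta}, but not the range $2<p<3$ asserted in the statement; that range needs a different argument.

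\textbf{Minor point in Step~3.} Your reason for avoiding the regularity route is inaccurate. Propagating $D(\mathcal H_Z^{\delta'})$-regularity does not require $g$ to preserve the vertex conditions. A Gronwall estimate on time difference quotients gives $\partial_t\mathbf U\in L^\infty_tL^2$, hence $\mathcal H_Z^{\delta'}\mathbf U=i\partial_t\mathbf U+g(\mathbf U)\in L^\infty_tL^2$. This does no harm, since your Ozawa and $J_\varepsilon$-regularization alternatives are valid.
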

\begin{proof}
    See Theorem~3.22 in \cite{AngGol17b}.
\end{proof}

\medskip
The following result will be essential in the proof of Theorem \ref{thm:insta}.

\begin{proposition}\label{lem:group}
    The unitary group $\{e^{-it\mathcal{H}^{\delta'}_Z}\}_{t \in \mathbb R}, $ associated to the linear part of \eqref{NLS}, preserves each of the subspaces in the decompositions \eqref{eq:l2orth} and \eqref{eq:decompdom}.
\end{proposition}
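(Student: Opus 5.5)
The key fact is that the operator $\mathcal H:=\mathcal H^{\delta'}_Z$ commutes with the three projections $P_{\mathrm{mean}},P_{1,\perp},P_{N,\perp}$ of Lemma~\ref{lem:domain-decomp-deltaprime}. Once this is known, the spectral theorem (or Stone's theorem) gives that each projection commutes with every bounded Borel function of $\mathcal H$, in particular with $e^{-it\mathcal H}$. Then for $\mathbf v\in X_{\mathrm{mean}}$ (say) we get $e^{-it\mathcal H}\mathbf v=e^{-it\mathcal H}P_{\mathrm{mean}}\mathbf v=P_{\mathrm{mean}}e^{-it\mathcal H}\mathbf v\in X_{\mathrm{mean}}$, and the same argument works for the other two subspaces. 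Invariance of $D(\mathcal H)\cap X$ follows because $e^{-it\mathcal H}$ also preserves $D(\mathcal H)$ (it commutes with $\mathcal H$).

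To prove the commutation, I will show that for each $P\in\{P_{\mathrm{mean}},P_{1,\perp},P_{N,\perp}\}$ we have $P D(\mathcal H)\subset D(\mathcal H)$ and $P\mathcal H\mathbf v=\mathcal H P\mathbf v$ for all $\mathbf v\in D(\mathcal H)$. The first inclusion is exactly Lemma~\ref{lem:domain-decomp-deltaprime}. For the second, note that the projections act as constant matrices on the vector $(v_1(x),\dots,v_N(x))$, namely as block averaging matrices. Hence they commute with the componentwise operator $-\partial_x^2$: $(P(-\mathbf v''))=-(P\mathbf v)''$ pointwise, since averaging commutes with differentiation on each edge. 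So $P\mathcal H\subset\mathcal H P$.

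The only slightly delicate step is passing from "$P$ bounded self-adjoint with $P\mathcal H\subset \mathcal H P$" to commutation with the unitary group. I will handle it through the resolvent. For $\lambda\notin\sigma(\mathcal H)$ and $\mathbf f\in L^2$, set $\mathbf u=(\mathcal H-\lambda)^{-1}\mathbf f$. Then $(\mathcal H-\lambda)P\mathbf u=P\mathbf f$, so $P(\mathcal H-\lambda)^{-1}=(\mathcal H-\lambda)^{-1}P$. Commutation with all resolvents implies commutation with the spectral measure, hence with $e^{-it\mathcal H}$.

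Alternatively, one can argue directly. Let $\mathbf U(t)=e^{-it\mathcal H}\mathbf v_0$ with $\mathbf v_0\in D(\mathcal H)$. Then $P\mathbf U(t)$ solves the same linear problem $i\partial_t\mathbf W=\mathcal H\mathbf W$ in $D(\mathcal H)$ with data $P\mathbf v_0$, so uniqueness gives $P\mathbf U(t)=e^{-it\mathcal H}P\mathbf v_0$. The result extends to all of $L^2(\dg)$ by density and unitarity.
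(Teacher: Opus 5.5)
Your argument is correct, but it takes a different route from the paper's.

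\textbf{The paper's route.} The paper proceeds by explicit computation. It writes the resolvent $(\mathcal H^{\delta'}_Z+\mu^2)^{-1}$ as the free half-line kernel plus a term $c_j e^{-\mu(x-L)}$, where the coefficients $c_j$ are solved from the $\delta'$ vertex conditions. It then checks by hand that this resolvent maps each of $X_{\mathrm{mean}}$, $X_{1,\perp}$, $X_{N,\perp}$ into itself. Separately, it checks that the projection onto the eigenfunction $\mathbf V_{z_0}$ (present for $Z<0$) respects the decomposition. Finally, it transfers invariance to $e^{-it\mathcal H^{\delta'}_Z}$ through an integral representation of the propagator in terms of (boundary values of) resolvents.

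\textbf{Your route.} You obtain resolvent invariance abstractly, in three steps:
\begin{enumerate}
\item Lemma~\ref{lem:domain-decomp-deltaprime} gives $P D(\mathcal H^{\delta'}_Z)\subset D(\mathcal H^{\delta'}_Z)$.
\item The projections are constant block-averaging matrices, so they commute with $-\partial_x^2$ componentwise. Hence $P\mathcal H^{\delta'}_Z\subset \mathcal H^{\delta'}_Z P$.
\item Therefore $P$ commutes with every resolvent, and by the spectral theorem with every bounded Borel function of $\mathcal H^{\delta'}_Z$.
\end{enumerate}
Your alternative argument via uniqueness for $i\partial_t\mathbf W=\mathcal H^{\delta'}_Z\mathbf W$ on $D(\mathcal H^{\delta'}_Z)$, followed by density, is also valid.

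\textbf{What each approach buys.} Your route is shorter and more robust:
\begin{itemize}
\item It needs no explicit kernel and no separate treatment of the point spectrum.
\item It avoids the excluded value $\mu^2=N^2/Z^2$ where the formula for $\kappa$ degenerates.
\item It does not require justifying the integral formula for the group.
\item It covers $Z>0$ and $Z<0$ at once, and applies verbatim to any self-adjoint vertex condition whose domain is invariant under the block averages.
\item It shows directly that $\mathcal H^{\delta'}_Z$ is reduced by the decomposition. This is the same commutation property later invoked for $\mathcal L_{\pm,Z}$.
\end{itemize}
What the paper's computation buys in exchange is explicit formulas for the resolvent coefficients and the propagator. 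These may be useful elsewhere, but they are not needed for the invariance statement itself.
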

\begin{proof}
The main point in the analysis is to get an explicit formula for the resolvent associated with the operator $-\Delta=\mathcal{H}_Z^{\delta'}$ with domain $D(\mathcal{H}_Z^{\delta'})$ in \eqref{domainintro}, which in turn yield an explicit representation of the propagator $e^{-it\mathcal{H}^{\delta'}_Z}$ through functional calculus for  unbounded operators.

We star with the spectral structure  of $\mathcal{H}_Z^{\delta'}$. For $Z<0$ we have $\sigma_{ess}(\mathcal{H}_Z^{\delta'})=[0,\infty)$ and $\sigma_p(\hz)=\{-z_0^2\}=\{-N^2/Z^2\}$, with corresponding eigenfunction $\mathbf{V}_{z_0}=(e^{\frac{N}{Z}(x-L)},\dots,e^{\frac{N}{Z}(x-L)})$. When $Z>0$ the only difference is that $\sigma_p(\hz)=\emptyset$; see \cite[Remark~3.23]{AngGol17b}. We give the details of the proof of the Proposition only for the case $Z<0$, as its proof also establishes the case $Z>0$.

Next, for $\mathbf{V}=(v_1,\dots,v_N)\in L^2 (\mathcal G)$ and $\mu>0$ (without loss of generality) and $\mu^{2}\neq N^2/Z^2$,  the resolvent $\mathcal{R}_\mu=(\hz+\mu^2 I)^{-1}$ of $\hz$ has components 
\begin{equation}
    \label{eq:components}
    (\mathcal{R}_\mu \V)_j(x)=c_j e^{-\mu(x-L)}+\frac{1}{2\mu}\int_{L}^\infty v_j(y)e^{-\mu|x-y|}\,dy,\qquad x>L
\end{equation}
where the constants $c_j=c_j(\mu, \V)$ are uniquely chosen such $(\Phi_j)\equiv ((\mathcal{R}_\mu \V)_j) \in D(\mathcal{H}_Z^{\delta'})$. By convenience of the reader, we give an explicit formula for $c_j$. Setting $t_j(\mu):=\tfrac{1}{2}\int_L^\infty v_j(y)e^{-\mu y}\,dy$, the boundary values and derivatives at $x=L$ are
\[
\Phi_j(L)=c_j+\frac{t_j(\mu)}{\mu}, \qquad \partial_x\Phi_j(L)=-\mu c_j+t_j(\mu).
\]
Imposing $(\Phi_j)\in\dom$ yields the two conditions
\begin{equation}\label{eq:twocond}
    \kappa:=-\mu c_1 + t_1(\mu)=\cdots=-\mu c_N + t_N(\mu), \qquad \sum_{j=1}^N \left(c_j+\frac{t_j(\mu)}{\mu}\right)=Z\kappa.
\end{equation}
From the first condition in \eqref{eq:twocond} we have $c_j=\mu^{-1}(t_j(\mu)-\kappa)$ for every $j$. Substituting into the second condition and solving for $\kappa$ gives
\begin{equation}
    \label{eq:kappa}
\kappa=\frac{2\sum_{j=1}^Nt_j(\mu)}{N+Z\mu},
\end{equation}
and therefore the coefficients are determined explicitly by
\begin{equation}
    \label{eq:cj_closed}
    c_j=\frac{1}{\mu}\left(t_j(\mu)-\frac{2\sum_{k=1}^N t_k(\mu)}{N+Z\mu} \right).
\end{equation}

\medskip\noindent
For $Z<0$, by functional calculus for unbounded operators (\cite[Remark~3.2]{AngGol17b}), the group $e^{-it\hz}$ admits the representation for $\mathbf{V}\in L^2 (\mathcal G)$
\begin{equation}
    \label{eq:3.4}
    e^{-it\hz}\V(x)=\frac{i}{\pi}\int_{-\infty}^\infty e^{-it\tau^2}\tau\, (\mathcal{R}_{i\tau}\V)(x)\,d\tau + e^{itz_0^2}\langle \V,\V_{z_0} \rangle \V_{z_0}(x), 
\end{equation}
Then, since each subspace $X_{\mathrm{mean}}$, $X_{1,\perp}$, $X_{N,\perp}$ is closed in $L^2(\mathcal{G})$, it suffices by \eqref{eq:3.4} to verify invariance under the resolvent $\mathcal{R}_\mu$ and under the eigenfunction projection separately.

\bigskip
Let us first show the invariance of $X_{1,\perp}$.
Let $\V\in \xa$, so that $v_j\equiv 0$ for $j\in B$ and $\sum_{j\in A}v_j\equiv 0$. These conditions give $t_j(\mu)=0$ for $j\in B$ and $\sum_{j\in A}t_j(\mu)=0$, hence $\sum_{j=1}^Nt_j(\mu)=0$. By \eqref{eq:kappa} we get $\kappa=0$, and \eqref{eq:cj_closed} then gives $c_j=t_j(\mu)/\mu$ for $j\in A$ and $c_j=0$ for $j\in B$. From \eqref{eq:components} it follows immediately that $(\mathcal{R}_\mu\V)_j=0$ for $j\in B$, and
\[
\sum_{j\in A}(\mathcal{R}_\mu\V)_j(x)=\frac{e^{-\mu (x-L)}}{\mu}\sum_{j\in A}t_j(\mu)+\frac{1}{2\mu}\int_{L}^\infty e^{-\mu|x-y|}\sum_{j\in A}v_j(y)\,dy=0,
\]
so $\mathcal{R}_\mu\V\in \xa$. For the eigenfunction projection, since $\sum_{j\in A}v_j\equiv 0$ we have
\[
\langle \V,\V_{z_0}\rangle = \sum_{j=1}^N\int_L^\infty v_j(x)e^{\frac{N}{Z}(x-L)}dx = \sum_{j\in A}\int_L^\infty v_j(x)e^{\frac{N}{Z}(x-L)}dx = \int_L^\infty \sum_{j\in A}v_j(x)\, e^{\frac{N}{Z}(x-L)}dx = 0,
\]
so the eigenfunction term in \eqref{eq:3.4} vanishes. We conclude that $e^{-it\hz}\V\in\xa$.

\medskip
An analogous argument, with the roles of $A$ and $B$ interchanged, shows that $\mathcal{R}_\mu\V\in \xb$ and that the eigenfunction projection vanishes whenever $\V\in\xb$, so $e^{-it\hz}\V\in \xb$.

Let us now show the invariance of $X_{\mathrm{mean}}$.
Let $\V=(v_A,\dots,v_A,v_B,\dots,v_B)\in \xm$, where $v_A$ is the common value on $A$ and $v_B$ is the common value on $B$. Then
\[
t_j(\mu)\equiv t^A_\mu := \frac{1}{2}\int_{L}^\infty v_A(y)e^{-\mu y}\,dy \ \ \text{for } j\in A, \qquad t_j(\mu)\equiv t^B_\mu := \frac{1}{2}\int_{L}^\infty v_B(y)e^{-\mu y}\,dy \ \ \text{for } j\in B.
\]
From \eqref{eq:cj_closed} yields $c_j$ coefficients that are constant within each cluster with values
\[
c_j=\frac{t_\mu^A-\kappa}{\mu}=:c_A \quad \text{for } j\in A, \qquad c_j=\frac{t_\mu^B-\kappa}{\mu}=:c_B \quad \text{for } j\in B.
\]
Therefore, for each cluster $\#\in\{A,B\}$, all components indexed by $\#$ are equal with the expression
\[
(\mathcal{R}_\mu \V)_j(x)=c_\# \,e^{-\mu (x-L)}+\frac{1}{2\mu}\int_L^\infty v_\#(y)e^{-\mu|x-y|}\,dy, \qquad j\in \#,
\]
so $\mathcal{R}_\mu \V\in \xm$. Since $\V_{z_0}$ has all components equal, it belongs to $\xm$, and so does the eigenfunction term in \eqref{eq:3.4}. We conclude that $e^{-it\hz}\V\in\xm$.

\bigskip
Finally, the invariance of the decomposition \eqref{eq:decompdom} follows at from the invariance of each $L^2(\mathcal{G})$-subspace 
established above, we conclude that for each $\#\in\{\mathrm{mean};\,1,{\perp};\,N,{\perp}\}$, 
if $\V\in\dom\cap X_{\#}$ then $e^{-it\hz}\V\in\dom$ and $e^{-it\hz}\V\in X_{\#}$, 
hence $e^{-it\hz}\V\in\dom\cap X_{\#}$.
\end{proof}

\bigskip
For $1<p<5$, Theorem~\ref{global} already yields global well-posedness for arbitrary data in $H^1(\mathcal G)$.  For $p=5$, global existence is guaranteed only \emph{below certain mass threshold} we will discuss in the following. The arguments are classic, but for the reader's convenience, we provide an overview of the proof. Consider the conserved quantities \eqref{Ener}--\eqref{mass}, written for $p=5$ and $Z\neq 0$ as
\begin{equation}\label{eq:Ecrit}
E_Z(\mathbf U)=\frac12\|\mathbf U'\|_{L^2(\mathcal G)}^2+\frac{1}{2Z}\Big|\sum_{j=1}^N u_j(L)\Big|^2-\frac16\|\mathbf U\|_{L^6(\mathcal G)}^6,
\qquad
Q(\mathbf U)=\frac12\|\mathbf U\|_{L^2(\mathcal G)}^2.
\end{equation}

\begin{lemma}\label{lem:GN}
Let $\Psi$ be the half--soliton associated to \eqref{NLS} with $N=1$, $p=5$, i.e.\ the restriction to $[L,\infty)$ of the classical line soliton $\phi$ of $-\phi''+\phi=\phi^{5}$ peaked at the endpoint  ($\Psi'(L)=0$ and $\Psi=\Psi_{0, 5, 1}$ as in \eqref{eq:Theta-p}), and set
\[
K(\mathbb R_+):=\frac{3}{\|\Psi\|_{L^2(L,\infty)}^{4}}.
\]
Then the Gagliardo--Nirenberg inequality
\begin{equation}\label{eq:GN}
\|\mathbf U\|_{L^6(\mathcal G)}^6\le C\,\|\mathbf U'\|_{L^2(\mathcal G)}^2\,\|\mathbf U\|_{L^2(\mathcal G)}^4
\end{equation}
holds for any $\mathbf U \in H^1(\mathcal G)$, with the  sharp constant 
$$
C_{opt}= \underset{\mathbf U \in H^1(\mathcal G)-\{\mathbf 0\}} {sup} \frac{\|\mathbf U\|_{L^6(\mathcal G)}^6}{\|\mathbf U'\|_{L^2(\mathcal G)}^2\,\|\mathbf U\|_{L^2(\mathcal G)}^4}
$$
given by $C_{opt}=K(\R_+)$, attained by the single--edge half--soliton $(\Psi,0,\dots,0)$ and its permutations, rotations and critical rescalings.

\end{lemma}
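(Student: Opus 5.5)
We reduce the graph inequality to the half-line problem via symmetric decreasing rearrangement on the star graph, and then identify the optimizer by comparing with the real line.

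First, the half-line inequality. For $u\in H^1(L,\infty)$, extend $u$ evenly about $x=L$ to $\tilde u\in H^1(\R)$. Then $\|\tilde u\|_{L^6(\R)}^6=2\|u\|_6^6$, $\|\tilde u'\|_{L^2(\R)}^2=2\|u'\|_2^2$ and $\|\tilde u\|_{L^2(\R)}^4=4\|u\|_2^4$. The sharp line inequality $\|\phi\|_6^6\le C_{\R}\|\phi'\|_2^2\|\phi\|_2^4$ has $C_{\R}=3/\|\phi_0\|_{L^2(\R)}^4$, where $\phi_0$ is the line soliton of $-\phi''+\phi=\phi^5$. This constant follows from the Pohozaev identities $\|\phi_0'\|_2^2=\tfrac13\|\phi_0\|_6^6$ together with $\|\phi_0'\|^2+\|\phi_0\|^2=\|\phi_0\|_6^6$ (Weinstein). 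Hence
\[
\|u\|_6^6\le 4C_{\R}\|u'\|_2^2\|u\|_2^4 .
\]
Since $\|\phi_0\|_{L^2(\R)}^2=2\|\Psi\|_{L^2(L,\infty)}^2$, we get $4C_{\R}=3/\|\Psi\|^4_{L^2(L,\infty)}=K(\R_+)$. Equality holds exactly for the even extensions attaining the line constant, that is, for $\Psi$ up to rotation and critical rescaling.

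Next, the graph inequality. For $\mathbf U\in H^1(\dg)$, note that no continuity at the vertex is imposed. Let $|U|^*$ be the decreasing rearrangement of $|\mathbf U|$ onto the single half-line $[L,\infty)$, defined by the distribution function $\mu(t)=\sum_j|\{|u_j|>t\}|$. It preserves the $L^2$ and $L^6$ norms. We then apply the Pólya–Szegő inequality on the half-line, $\|(|U|^*)'\|_2\le\|\mathbf U'\|_{L^2(\dg)}$. This is the step needing care. It should follow from the coarea argument: for a.e. $t$, each level $t$ is attained at least once on each edge where $\sup|u_j|>t$, whereas the half-line rearrangement attains it exactly once. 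Since $\sum_{x\in\{|U|=t\}}|U'|^{-1}=-\mu'(t)$, the Cauchy–Schwarz inequality yields
\[
\sum_{x\in\{|U|=t\}}|U'|\ \ge\ \frac{1}{\sum|U'|^{-1}}\ \ge\ |(|U|^*)'|\big|_{|U|^*=t}.
\]
Integrating in $t$ gives the claim. Applying the half-line inequality to $|U|^*$ then yields \eqref{eq:GN} with $C=K(\R_+)$, so $C_{opt}\le K(\R_+)$.

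For sharpness, observe that $(\Psi,0,\dots,0)\in H^1(\dg)$ is admissible precisely because the space imposes no vertex continuity. Its quotient equals $K(\R_+)$, so $C_{opt}=K(\R_+)$ and the supremum is attained. To characterize the optimizers, equality forces equality in the Pólya–Szegő step and in the half-line step. The former requires each level set to contain exactly one point for a.e. $t$, so $|\mathbf U|$ is supported on a single edge and is monotone there. The latter then gives $|U|^*=$ a rescaled $\Psi$. The phase is constant, since $\||u|'\|=\|u'\|$ forces $u=e^{i\theta}|u|$ on the support. Together with edge permutations, this produces the stated family.

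The main obstacle is the rigorous Pólya–Szegő step on a graph without vertex continuity, together with its equality case. I would handle both via the coarea formula for the finitely many absolutely continuous edge functions, following the standard star-graph rearrangement argument of Adami–Serra–Tilli. The $\delta'$ term in $E_Z$ plays no role, since it does not appear in \eqref{eq:GN}.
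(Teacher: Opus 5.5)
Your argument is correct, but for the passage from the half-line to $\mathcal G$ it takes a much heavier route than the paper. The half-line step coincides with the paper's: both use even reflection of Weinstein's sharp inequality on $\mathbb R$. Your check $4C_{\mathbb R}=3/\|\Psi\|_{L^2(L,\infty)}^4$ is equivalent to the paper's evaluation through $\|\Psi\|_{L^2}^2=2\|\Psi'\|_{L^2}^2$ and $\|\Psi\|_{L^6}^6=3\|\Psi'\|_{L^2}^2$.

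After that, the paper does no rearrangement at all. Because $H^1(\mathcal G)=\bigoplus_{j}H^1(L,\infty)$ carries no vertex coupling, $\mathcal G$ behaves as a disjoint union of half-lines. One simply applies the half-line inequality edge by edge and uses $\|u_j\|_{L^2}^4\le\|\mathbf U\|_{L^2(\mathcal G)}^4$:
\[
\|\mathbf U\|_{L^6(\mathcal G)}^6\le K(\mathbb R_+)\sum_{j=1}^N\|u_j'\|_{L^2}^2\,\|u_j\|_{L^2}^4\le K(\mathbb R_+)\,\|\mathbf U\|_{L^2(\mathcal G)}^4\,\|\mathbf U'\|_{L^2(\mathcal G)}^2 .
\]
So the step you single out as the main obstacle is entirely avoidable here. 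That step is the P\'olya--Szeg\H{o} inequality for the half-line decreasing rearrangement, together with its coarea justification and its equality case.

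The same chain also gives your characterization of optimizers with no extra work. Since $\|u_j'\|_{L^2}>0$ whenever $u_j\neq 0$, equality forces $\|u_j\|_{L^2}=\|\mathbf U\|_{L^2(\mathcal G)}$ for every nonzero component. Hence only one edge is active, and equality in the half-line inequality then identifies that edge as a rotated and rescaled $\Psi$.

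Your rearrangement approach buys robustness rather than anything needed for this lemma. It is the Adami--Serra--Tilli mechanism, which still works on graphs with bounded edges or a compact core, where no edgewise inequality with the half-line constant is available. For the decoupled energy space here, the edgewise estimate is both shorter and fully elementary.
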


\begin{proof}
We first record the sharp one--dimensional $L^2$--critical Gagliardo--Nirenberg
inequality on the half--line (see \cite{ASTcri}; the sharp constant on the
whole line is due to \cite{WE}, the half--line case following by even
reflection): for every $w\in H^1(L,\infty)$,
\begin{equation}\label{eq:GNhalf}
\|w\|_{L^6(L,\infty)}^6\le K(\mathbb R_+)\,\|w'\|_{L^2(L,\infty)}^2\,\|w\|_{L^2(L,\infty)}^4 .
\end{equation}
The optimizer of \eqref{eq:GNhalf} is the half--soliton $\Psi$, at which,
evaluating \eqref{eq:GNhalf} together with the identities
$\|\Psi\|_{L^2}^2=2\|\Psi'\|_{L^2}^2$ and $\|\Psi\|_{L^6}^6=3\|\Psi'\|_{L^2}^2$,
yields $K(\mathbb R_+)=3\|\Psi\|_{L^2}^{-4}$. Note by the critical scaling that
$\|\Psi_\omega\|_{L^2}=\|\Psi\|_{L^2}$ for any $\omega>0$, with $\Psi_\omega=\Psi_{0, 5, \omega}$ in  \eqref{eq:Theta-p}.

\smallskip
Applying \eqref{eq:GNhalf} on each edge and using $\|u_j\|_{L^2}^4\le\big(\sum_k\|u_k\|_{L^2}^2\big)^2=\|\mathbf U\|_{L^2(\mathcal G)}^4$,
\[
\|\mathbf U\|_{L^6(\mathcal G)}^6
\le K(\mathbb R_+)\sum_{j=1}^N\|u_j'\|_{L^2}^2\,\|u_j\|_{L^2}^4
\le K(\mathbb R_+)\,\|\mathbf U\|_{L^2(\mathcal G)}^4\sum_{j=1}^N\|u_j'\|_{L^2}^2
=K(\mathbb R_+)\,\|\mathbf U'\|_{L^2(\mathcal G)}^2\,\|\mathbf U\|_{L^2(\mathcal G)}^4,
\]
so $C_{opt}\le K(\mathbb R_+)$. Testing with $\mathbf W=(\Psi,0,\dots,0)$, where every norm reduces to that of $\Psi$ and \eqref{eq:GNhalf} is an equality, gives equality in \eqref{eq:GN}; hence $C_{opt}=K(\mathbb R_+)$ in $H^1(\dg)$.

\end{proof}

\begin{remark}\label{rmk:theta-not-extremal}
The symmetric $\boldsymbol\Theta=(\Psi,\dots,\Psi)$, although a genuine bound state and an element of $X_{\mathrm{mean}}(n)$ for every $n$, is \emph{not} an extremizer of \eqref{eq:GN}. In fact, using $\|\boldsymbol\Theta\|_{L^q(\mathcal G)}^q=N\|\Psi\|_{L^q(L, \infty)}^q$ one finds
\[
\frac{\|\boldsymbol\Theta\|_{L^6}^6}{\|\boldsymbol\Theta'\|_{L^2}^2\,\|\boldsymbol\Theta\|_{L^2}^4}
=\frac{1}{N^2}\,\frac{3}{\|\Psi\|_{L^2(L,\infty)}^4}=\frac{C_{opt}}{N^2}<C_{opt}.
\]
\end{remark}

\begin{theorem}\label{thm:GWPcrit}
Let $p=5$ and let $\Psi$ be the half--soliton of \eqref{NLS} with $N=1$.
If $\mathbf U_0\in H^1(\mathcal G)$ satisfies
\begin{equation}\label{eq:subthreshold}
\|\mathbf U_0\|_{L^2(\mathcal G)}<\|\Psi\|_{L^2(L,\infty)},
\end{equation}
then the corresponding solution $\mathbf U$ of \eqref{NLS} with $\mathbf U(0)=\mathbf U_0$ is global  with the properties $\mathbf U \in C(\mathbb R;H^1(\mathcal G))$ and $\displaystyle\sup_{t\in\mathbb R}\|\mathbf U(t)\|_{H^1(\mathcal G)}<\infty$.
\end{theorem}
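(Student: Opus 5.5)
The plan is the standard continuation argument: combine Theorem~\ref{global} (local well-posedness plus conservation of $E_Z$ and $Q$) with the sharp Gagliardo--Nirenberg inequality of Lemma~\ref{lem:GN} to obtain an a priori bound on $\|\mathbf U'(t)\|_{L^2(\mathcal G)}$ that is uniform on the maximal existence interval. Since the local existence time depends only on the $H^1$-norm of the datum, such a bound yields global existence together with $\sup_t\|\mathbf U(t)\|_{H^1}<\infty$.

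\emph{First step.} Let $\mathbf U$ be the maximal solution on $(-T_-,T_+)$. Mass conservation gives $\|\mathbf U(t)\|_{L^2}=\|\mathbf U_0\|_{L^2}=:m<\|\Psi\|_{L^2}$.

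\emph{Second step.} Energy conservation and \eqref{eq:GN} with $C_{opt}=3\|\Psi\|_{L^2}^{-4}$ give
\[
E_Z(\mathbf U_0)\ \ge\ \frac12\Big(1-\frac{m^4}{\|\Psi\|_{L^2}^4}\Big)\|\mathbf U'(t)\|_{L^2}^2+\frac1{2Z}\Big|\sum_{j=1}^N u_j(t,L)\Big|^2 .
\]
The coefficient of $\|\mathbf U'\|^2$ is strictly positive by \eqref{eq:subthreshold}.

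\emph{Third step.} Control the vertex term. For $Z>0$ it is nonnegative and can be dropped. For $Z<0$, which is the case of interest, it is negative and must be absorbed. Use the trace estimate on each half-line, $|u_j(L)|^2\le 2\|u_j\|_{L^2}\|u_j'\|_{L^2}$, together with Cauchy--Schwarz:
\[
\Big|\sum_j u_j(L)\Big|^2\le N\sum_j|u_j(L)|^2\le 2N\,m\,\|\mathbf U'\|_{L^2}\le \varepsilon\|\mathbf U'\|_{L^2}^2+\frac{N^2m^2}{\varepsilon}.
\]
Choose $\varepsilon$ so that $\varepsilon/(2|Z|)$ is at most half of $\frac12(1-m^4/\|\Psi\|^4)$. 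This gives
\[
\|\mathbf U'(t)\|_{L^2}^2\le C\big(E_Z(\mathbf U_0),m,N,Z,\|\Psi\|_{L^2}\big)
\]
for all $t$ in the existence interval.

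\emph{Fourth step.} Conclude with the blow-up alternative implicit in Theorem~\ref{global}. A uniform $H^1$ bound lets us restart the local solution with a fixed time step, so $T_\pm=\infty$.

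The only delicate point is the third step, absorbing the negative boundary term for $Z<0$. Because this term grows only linearly in $\|\mathbf U'\|$ while the positive part is quadratic, Young's inequality suffices, and no sharpness is lost at the mass threshold.
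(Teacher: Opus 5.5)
Your proposal is correct and follows essentially the same route as the paper's proof. In both, mass conservation and the sharp Gagliardo--Nirenberg inequality of Lemma~\ref{lem:GN} give the coercive coefficient $\tfrac12\bigl(1-\|\mathbf U_0\|_{L^2}^4/\|\Psi\|_{L^2}^4\bigr)$ in front of $\|\mathbf U'\|_{L^2}^2$; the negative vertex term for $Z<0$ is absorbed using the trace inequality, Cauchy--Schwarz and Young's inequality; and the blow-up alternative concludes. The only difference is cosmetic: the paper writes the coefficient through $\eta=\|\mathbf U_0\|_{L^2}/\|\Psi\|_{L^2}$ and fixes the Young weight explicitly as $\tfrac{1-\eta^4}{2}$.
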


\begin{proof}
By Theorem~\ref{global} the existence time depends only on $\|\mathbf U_0\|_{H^1(\mathcal G)}$, so the solution extends to a maximal interval $(-T_{\min},T_{\max})$ and the classical \emph{blow--up alternative} holds (see Cazenave \cite{caz}). Since $\|\mathbf U(t)\|_{L^2(\mathcal G)}=\|\mathbf U_0\|_{L^2(\mathcal G)}$ by charge conservation, it suffices to bound $\|\mathbf U'(t)\|_{L^2(\mathcal G)}$ uniformly in $t$.

\smallskip

Set $C=C_{\mathrm{opt}}$, the optimal constant in the Gagliardo--Nirenberg inequality \eqref{eq:GN} on $H^1(\mathcal G)$, and $M^\ast:=(3/C)^{1/4}=\|\Psi\|_{L^2(L,\infty)}$. By assumption $\|\mathbf U_0\|_{L^2(\mathcal G)}<M^\ast$. Put
\[
\eta:=\frac{\|\mathbf U_0\|_{L^2(\mathcal G)}}{M^\ast}\in[0,1),
\]
so that $C\,\|\mathbf U_0\|_{L^2(\mathcal G)}^4=C\,(M^\ast)^4\eta^4=3\eta^4$. Conservation of energy \eqref{eq:Ecrit} gives, at any time $t$,
\begin{equation}\label{eq:Eident}
\tfrac12\|\mathbf U'(t)\|_{L^2(\mathcal G)}^2
= E_Z(\mathbf U_0)-\frac{1}{2Z}\Big|\sum_{j=1}^N u_j(t,L)\Big|^2+\tfrac16\|\mathbf U(t)\|_{L^6(\mathcal G)}^6 ,
\end{equation}
while by \eqref{eq:GN} and charge conservation
\[
\tfrac16\|\mathbf U(t)\|_{L^6(\mathcal G)}^6
\le \frac{C}{6}\,\|\mathbf U_0\|_{L^2(\mathcal G)}^4\,\|\mathbf U'(t)\|_{L^2(\mathcal G)}^2
=\frac{\eta^4}{2}\,\|\mathbf U'(t)\|_{L^2(\mathcal G)}^2 .
\]
Substituting into \eqref{eq:Eident} yields
\begin{equation}\label{eq:dagger}
\frac{1-\eta^4}{2}\,\|\mathbf U'(t)\|_{L^2(\mathcal G)}^2
\le E_Z(\mathbf U_0)-\frac{1}{2Z}\Big|\sum_{j=1}^N u_j(t,L)\Big|^2 .
\end{equation}
If $Z>0$ the boundary term is nonpositive and $\|\mathbf U'(t)\|_{L^2(\mathcal G)}^2\le 2E_Z(\mathbf U_0)/(1-\eta^4)$. If $Z<0$, the trace inequality $|w(L)|^2\le 2\|w\|_{L^2}\|w'\|_{L^2}$ with Cauchy--Schwarz gives
\[
\Big|\sum_{j=1}^N u_j(t,L)\Big|^2\le 2N\,\|\mathbf U_0\|_{L^2(\mathcal G)}\,\|\mathbf U'(t)\|_{L^2(\mathcal G)},
\]
and Young's inequality (weight $\tfrac{1-\eta^4}{2}$) absorbs the gradient term, leaving
\[
\frac{1-\eta^4}{4}\,\|\mathbf U'(t)\|_{L^2(\mathcal G)}^2
\le E_Z(\mathbf U_0)+\frac{N^2\|\mathbf U_0\|_{L^2(\mathcal G)}^2}{(1-\eta^4)\,Z^2}<\infty .
\]
In either case $\sup_t\|\mathbf U'(t)\|_{L^2(\mathcal{G})} <\infty$, hence $\sup_t\|\mathbf U(t)\|_{H^1(\mathcal{G})}<\infty$, and the blow--up alternative forces $T_{\min}=T_{\max}=\infty$.
\end{proof}

\section{Existence of discontinuous standing waves in $\dom$}\label{sec:exists}

In this section we show, in the star graph $\dg$, the existence of up to $N-1$ (modulo reordering of the edges and depending of the position of the phase-velocity $\omega$ with respect to the threshold value $\omega=2N^2 /Z^2$,  see for instance Figure \ref{fig:placeholder} in the case $N=4$) different discontinuous-at-the-vertex standing waves solutions $e^{i\omega t}\Theta_\omega$, $\Theta_\omega=(\Psi_{a_i})_{i=1}^N$, for the NLS equation \eqref{NLS} with focusing cubic nonlinearity $p=3$ and  that belong to the set
\begin{equation}\label{Cdelta'}
D(\mathcal{H}_Z^{\delta'})=\Big\{\mathbf{U}=(\psi_1,\dots,\psi_N)\in H^2(\dg) 
\Bigm| \psi_1'(L)=\cdots=\psi_N'(L), \ \ \sum_{j=1}^N \psi_j(L)=Z \psi_1'(L) \Big\},
\end{equation}
provided $Z<0$. The results of this section extend \cite[Theorem~4.2]{G} to an optimal range of existence $\omega_n^*<2N^2/Z^2$. 

We consider the half-line profiles $\Psi_j$ as positive solutions of
\[
-\Psi''+\omega \Psi-\Psi^{3}=0, \qquad x\ge L.
\]
The unique positive decaying solution of this differential equation is the one-dimensional soliton
\[
\mathfrak{S}_\omega(y)=\sqrt{2\omega}\,\operatorname{sech}\!\big(\sqrt{\omega}\,y\big),\quad \omega>0
\]
so that the half-line profiles  $\Psi_{a_j}$ take the form
\begin{equation}\label{soliton}
\Psi_{a_j}(x) = \mathfrak{S}_\omega\!\Big(x-L+\tfrac{1}{\sqrt{\omega}}\,a_j\Big), 
\qquad x \geq L, \qquad 1\le j\le N,  \omega>0,
\end{equation}
where the shifts $a_j$ are to be determined via the $\delta'$-conditions in  \eqref{Cdelta'}.

\subsection{Derivative matching condition}
For each half-line profile $\Psi_{a_j}$ the boundary values at the vertex $\nu=L$ read
\[
\Psi_{a_j}(L)=\sqrt{2\omega}\,\operatorname{sech}(a_j),
\qquad
\Psi_{a_j}'(L)=-\omega\sqrt{2}\,\operatorname{sech}(a_j)\tanh(a_j).
\]
Since $Z<0$ and we will further need to verify the sum condition 
\[
0<\sum_{j=1}^N \Psi_{a_j}(L)=Z\,\Psi_{a_1}'(L),
\] 
it follows that $\Psi_{a_j}'(L)<0$ for all $j$, and therefore $a_j>0$. The continuity of derivatives at the vertex, $
\Psi_{a_1}'(L)=\cdots=\Psi_{a_N}'(L)$, 
is equivalent to the existence of a common parameter $\mu>0$ such that
\begin{equation}\label{eqmu}
\mu\equiv \operatorname{sech}(a_j)\tanh(a_j),
\qquad j=1,\dots,N.
\end{equation}
Setting $t_j:=\tanh(a_j)\in(0,1)$ and using $\operatorname{sech}^2(a_j)=1-t_j^2$, 
equation~\eqref{eqmu} reduces to
\begin{equation}\label{eq:gp}
t_j\sqrt{1-t_j^2}=\mu, \qquad j=1,\dots,N.
\end{equation}
The function $t\mapsto t\sqrt{1-t^2}$ attains its unique maximum on $(0,1)$ at 
$t=1/\sqrt{2}$, with value $1/2$, and for every $\mu\in(0,1/2)$ equation~\eqref{eq:gp} 
admits exactly two solutions (see Figure~\ref{fig:gp}),
\begin{figure}
    \centering
    \resizebox{0.5\linewidth}{!}{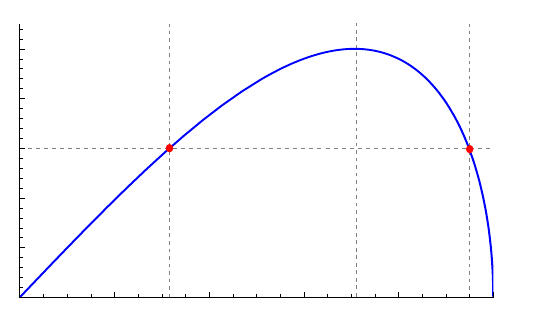}
    \caption{Plot of $t\sqrt{1-t^2}$ on $(0,1)$. For any $\mu\in (0,\frac{1}{2})$ there are two solutions $t_A(\mu)<1/\sqrt{2}<t_B(\mu)$.}
    \label{fig:gp}
\end{figure}
\begin{equation}
\label{eq:42a}
t_A(\mu)\in(0,1/\sqrt{2})
\quad\text{and}\quad
t_B(\mu)\in(1/\sqrt{2},1).
\end{equation}
Note $t_A(\mu)$ and $t_B(\mu)$ satisfy the convenient algebraic identity
\begin{equation}\label{eq:tAtB}
t_A^2(\mu)+t_B^2(\mu)=1.
\end{equation}

From the uniqueness of $t_A(\mu)$ and $t_B(
\mu)$, equation~\eqref{eqmu} admits, for each $j$, exactly two distinct 
shift solutions
\begin{equation}
\label{eq:42b}
a_A(\mu)\equiv \operatorname{arctanh}(t_A(\mu))
\quad\text{and}\quad
a_B(\mu)\equiv \operatorname{arctanh}(t_B(\mu)).
\end{equation}
Hence, for fixed $\mu$, each half-line shift $a_j$ in \eqref{soliton} must belong 
to the set $\{a_A(\mu),a_B(\mu)\}$. Moreover 
one has 
\begin{equation}\label{AB}
0<a_A<a_*<a_B, \;\;\text{with threshold}\;\;  a_*:=\operatorname{arctanh}(1/\sqrt{2}).
\end{equation} 
The latter motivates the following definition:

\begin{definition}\label{tipoAB}
The profile \eqref{soliton} is said to be a tail of type $A$ if 
$0<a_j<a_*$ and of type $B$ if $a_j>a_*$. We may refer to type $B$ 
tail profiles as \textit{pure tail} profiles in the sense $\Psi''(x)>0$ for all $x>L$. 
\end{definition}

It is worth mentioning that $a_*$ represents some critical scenarios in our study:
\begin{enumerate}\label{criticos}
\item[1)] the specificity of the profile type because $\Psi_j''(L)=0$ at $a_*$ and we may have a transition from type A to type B profiles (or vice versa) at $a_*$.
\item[2)] a threshold value for the Morse index of the half-line Neumann problem \eqref{eq:Lplus-halfline-p-intro} central to our stability study (see Lemma \ref{lem:LA-negative-generalp}).
\item[3)] the sum condition in \eqref{Cdelta'} for $\Psi_{a_*}$ implies $\omega=\frac{2N^2}{Z^2}$. This value of the phase velocity is a threshold value for Morse index values and (in)stability scenarios (see 
Theorem \ref{thm:morse1} and Theorem \ref{thm:insta} or Tables \ref{table:1}-\ref{table:2}).
\end{enumerate}

In the subsequent we seek instead for a clean formulation of the initial shifts 
depending on $\omega$. 

\subsection{The sum condition and its constraints}
Let $n\in\{0,\dots,N\}$ denote the number of half-lines selecting one of the two
shifts $a_A, a_B$, and $N-n$ the number selecting the complementary one. Without loss of 
generality assume they are grouped in the same edge ordering 
$a_1=\dots=a_n$ and $a_{n+1}=\dots=a_N$. Introducing the parameter
\[
\alpha=\alpha(\omega):=-Z\sqrt{\omega}>0,
\]
for $Z<0$ fixed, the sum condition
\begin{equation}\label{sumcond}
\sum_{j=1}^N \Psi_j(L)=Z\,\Psi_1'(L)
\end{equation}
reduces, after dividing through by a constant, to
\[
n\,\operatorname{sech}(a_1)+(N-n)\,\operatorname{sech}(a_N)
=\alpha\,\operatorname{sech}(a_1)\tanh(a_1).
\]
Using \eqref{eqmu} in the form $\operatorname{sech}(a_j)=\mu/t_j$, 
the parameter $\mu$ cancels out and the relation simplifies to
\begin{equation}\label{eq:kn}
\frac{n}{t_1}+\frac{N-n}{t_N}=\alpha.
\end{equation}
In particular if $t_1=t_N$, that is, if all half-line profiles are continuous at the vertex (mandatory for $n=0$,
$n=N$ or for a single half-line $N=1$), then \eqref{eq:kn} reduces to (see \cite{AngGol17b}) \[
t_1=\cdots=t_N=\frac{N}{-Z\sqrt{\omega}}, \qquad\text{implying}\quad \omega>\frac{N^2}{Z^2},
\] 
and we recover the profiles studied in Angulo and Goloshchapova  \cite{AngGol17b}.  Here, we are interested in discontinuous configurations, we assume from here that $t_1\neq t_N$, which forces us to consider only $N\ge 2$ and $n\in \{1,\dots,N-1\}$. By \eqref{eq:tAtB}, one has $t_N=\sqrt{1-t_1^2}$, so that the compatibility 
relation \eqref{eq:kn} reads
\begin{equation}\label{eq:alphan}
\text{for}\;\;\;\;\alpha_{n}(t):=\frac{n}{t}+\frac{N-n}{\sqrt{1-t^2}},\quad \alpha_{n}(t)=\alpha(\omega), 
\qquad t\in(0,1),
\end{equation}
where now $t_1=t$ and $t_N=\sqrt{1-t^2}$.

\medskip
A direct computation shows that for fixed $n\in\{1,\dots,N-1\}$ the function 
$\alpha_n$ is smooth on $(0,1)$, blows up to infinity as $t\to 0^+$ or 
$t\to 1^-$, and satisfies
\[
\alpha_n'(t)=-\frac{n}{t^2}+\frac{(N-n)\,t}{(1-t^2)^{3/2}}.
\]
Setting $\alpha_n'(t)=0$ yields a unique critical point $t_n^*\in(0,1)$, which 
is therefore the unique global minimizer of $\alpha_n$ (see Figure~\ref{fig:p=3}),
\begin{figure}
    \centering
    \resizebox{0.8\linewidth}{!}{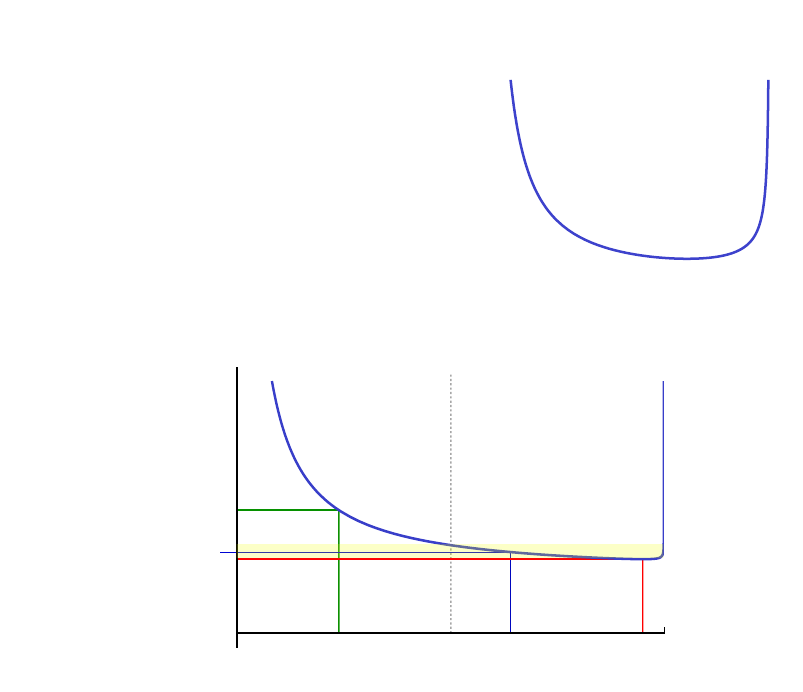}
    \caption{Three scenarios for $\alpha_n$: When $2n<N$ the unique minimizer for $\alpha_n$  is less than $1/\sqrt{2}$ so $t_1(\omega)$ is below $1/\sqrt{2}$ for all admissible $\omega> \omega_n^*$. When $2n=N$ then the unique minimizer is attained exactly at $1/\sqrt{2}$ but still, in the decreasing branch, $t_1(\omega)$ does not exceed $1/\sqrt{2}$. When $2n>N$, the yellow strip has ceiling $\alpha(\widehat{\omega})$ and floor $\alpha_{n}^*$. For frequencies $\omega_n^*\le\omega_1<\widehat{\omega}$ (inside the yellow strip) the position of $t_1(\omega_1)$ is on the right  of $1/\sqrt{2}$ while for large frequencies $\omega_2>\widehat{\omega}$ (above the yellow strip) $t_1(\omega_2)$ remains to the left of $1/\sqrt{2}$.}
    \label{fig:p=3}
\end{figure}
explicitly given by
\begin{equation}\label{eq:cubic-infimum}
(t_n^*)^2=\frac{n^{2/3}}{n^{2/3}+(N-n)^{2/3}}, 
\qquad 
\alpha_n(t_n^*)=\big(n^{2/3}+(N-n)^{2/3}\big)^{3/2}.
\end{equation}
The minimum value $\alpha_n(t_n^*)$ is monotone increasing in 
$n\in\{1,\dots,N-1\}$, and the position of the minimizer satisfies
\[
t_n^*\le \tfrac{1}{\sqrt{2}}\iff 2n\le N, 
\qquad 
t_n^*\ge \tfrac{1}{\sqrt{2}}\iff 2n\ge N,
\]
with equality precisely when $2n=N$ (only possible for an even amount of half-lines $N$).

\medskip 
The position of $t_n^*$ relative to $1/\sqrt{2}$ has a direct geometric 
consequence for the type of tail profile defined by $t_1(\omega)$ (see Definition \ref{tipoAB}, \eqref{eq:42a} and \eqref{AB}). By 
construction $\alpha_n$ is strictly decreasing on $(0,t_n^*)$, so for 
$\alpha(\omega)=-Z\sqrt{\omega}>\alpha_n(t_n^*)$ there exists a unique solution 
$t_1(\omega)\in(0,t_n^*)$ of \eqref{eq:alphan}. Recall that $t_1(\omega)
\in(0,1/\sqrt{2})$ defines a type-$A$ profile, while 
$t_1(\omega)\in(1/\sqrt{2},1)$ defines a type-$B$ profile.

When $2n\le N$ one has $t_n^*\le 1/\sqrt{2}$ and the whole decreasing branch 
on $(0,t_n^*)$ lies in the type-$A$ region. Therefore the configuration always produces 
exactly $n$ type-$A$ profiles together with $N-n$ type-$B$ profiles, for any 
$\omega$ above the {\it existence threshold }
$\omega_n^*$ defined by
\begin{equation}\label{ExThr}
\omega_n^*= \frac{\alpha^2_n(t_n^*)}{Z^2}=  \frac{(n^{2/3}+(N-n)^{2/3})^{3}}{Z^2}.
\end{equation}

When $2n>N$, on the contrary, $t_n^*>1/\sqrt{2}$ and for $\alpha(\omega)>\alpha_n(t_n^*)$ (see Figure \ref{fig:p=3}) the position of $t_1(\omega)$ 
inside $(0,t_n^*)$ clearly now depends on the size of $\omega$ and  a natural new {\it threshold value emerges}. Indeed, evaluating $\alpha_n$ 
at $1/\sqrt{2}$ yields $\alpha_n(1/\sqrt{2})=N\sqrt{2}$, which corresponds to $\alpha(\widehat{\omega})$ for the frequency \[
\widehat{\omega}=\frac{2N^2}{Z^2}.
\]
By the strict monotonicity of $\alpha_n$ on $(0,t_n^*)$, for $2n>N$ one has
\[
t_1(\omega)\in(0,1/\sqrt{2}) \iff -Z\sqrt{\omega}> N\sqrt{2} \iff \omega>\widehat{\omega},
\]
whereas for the low frequencies $\omega<\widehat{\omega}$ ($-Z\sqrt{\omega}< N\sqrt{2}$) 
the solution of equation $\alpha_{n}(t)=\alpha(\omega)$ satisfies $t_1(\omega)\in(1/\sqrt{2},t_n^*)$ and the $n$ half-lines 
sharing the shift $t_1$ are then of type $B$, and only the $N-n$ complementary 
half-lines are of type $A$. The increasing branch $(t_n^*,1)$ of $\alpha_n$, when $2n>N$, 
also lies entirely above $1/\sqrt{2}$ and produces the same type of former configurations. We can  
therefore conclude the following:

\begin{proposition}\label{nonexis}
If $2n>N$ and $\omega<2N^2/Z^2$ ($-Z\sqrt{\omega}< N\sqrt{2}$), no discontinuous-at-$\nu=L$ standing waves carrying exactly $n$ type-$A$ profiles exists.  In other words, in our notation, when $2n>N$ and $\omega<\widehat{\omega}$ we have always
$$
a_1=a_B(\omega), \;\; a_N=a_A(\omega) 
$$
and thus the profile $\Theta_{\omega,n}=( \underbrace{\Psi_{a_B},...,\Psi_{a_B}}_{n\; \text{times}}, \underbrace{\Psi_{a_A},...,\Psi_{a_A}}_{N-n\; \text{times}})$ has the $n$ first components always of type-$B$.
\end{proposition}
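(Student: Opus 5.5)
The plan is to reduce the statement to the behaviour of $\alpha_n$ near $t=1/\sqrt{2}$. By the derivative-matching analysis leading to \eqref{eq:42b}, every discontinuous profile in $\dom$ with $n$ components sharing one shift and $N-n$ sharing the other has $t_1\in\{t_A(\mu),t_B(\mu)\}$ and $t_N=\sqrt{1-t_1^2}$, with $t_1\neq t_N$. Moreover, the sum condition \eqref{sumcond} is equivalent to $\alpha_n(t_1)=-Z\sqrt{\omega}$ by \eqref{eq:alphan}. So the claim reduces to the following: when $2n>N$ and $-Z\sqrt{\omega}<N\sqrt{2}$, every root $t\in(0,1)$ of $\alpha_n(t)=-Z\sqrt{\omega}$ satisfies $t>1/\sqrt{2}$. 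Then $t_1=t_B$, so $a_1=a_B(\omega)$ and $a_N=a_A(\omega)$, and no configuration with exactly $n$ type-$A$ components exists.

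To show this, I will prove that $\alpha_n(t)\ge N\sqrt{2}$ for all $t\in(0,1/\sqrt{2}]$ whenever $2n>N$. On this interval $t\le 1/\sqrt 2\le \sqrt{1-t^2}$, so I write
\[
\alpha_n(t)=\frac{n}{t}+\frac{N-n}{\sqrt{1-t^2}}=\frac{N-n}{t}+\frac{N-n}{\sqrt{1-t^2}}+(2n-N)\frac1t .
\]
The function $t\mapsto \frac1t+\frac1{\sqrt{1-t^2}}$ is symmetric under $t\leftrightarrow\sqrt{1-t^2}$ and is minimized at $t=1/\sqrt2$, where it equals $2\sqrt2$; this can be checked either from the derivative formula $\alpha'$ with $n=N-n$, or from AM--GM together with $t\sqrt{1-t^2}\le 1/2$. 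Also, $\frac1t\ge\sqrt2$ on this interval. Hence
\[
\alpha_n(t)\ge 2\sqrt2(N-n)+\sqrt2(2n-N)=\sqrt2 N,
\]
with equality only at $t=1/\sqrt2$. This contradicts $\alpha_n(t_1)=-Z\sqrt\omega<N\sqrt2$, so $t_1>1/\sqrt2$.

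Alternatively, one can use the monotonicity already recorded: $t_n^*>1/\sqrt2$ when $2n>N$, so $\alpha_n$ is strictly decreasing on $(0,1/\sqrt2]\subset(0,t_n^*)$ and therefore $\alpha_n(t)\ge\alpha_n(1/\sqrt2)=N\sqrt2$ there. I will present this version, since it uses only facts already established; the direct inequality serves as a check. The increasing branch $(t_n^*,1)$ lies entirely to the right of $1/\sqrt2$, so it automatically gives type $B$ for $t_1$, consistent with the claim.

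I do not expect a real obstacle here. The only point needing care is the boundary case $t_1=1/\sqrt2$: it forces $t_1=t_N$, which gives a continuous profile that is excluded by assumption, and in any case it requires $\omega=\widehat\omega$. The conclusion about the form of $\Theta_{\omega,n}$ then follows from the ordering convention in \eqref{eq:thetan}.
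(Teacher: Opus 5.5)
Your proposal is correct, and the version you intend to present is essentially the paper's argument: for $2n>N$ one has $t_n^*>1/\sqrt{2}$, so $\alpha_n$ is strictly decreasing on $(0,t_n^*)$ and $\alpha_n(1/\sqrt{2})=N\sqrt{2}$, which forces any root of $\alpha_n(t)=-Z\sqrt{\omega}<N\sqrt{2}$ to lie in $(1/\sqrt{2},1)$, while the increasing branch $(t_n^*,1)$ already lies to the right of $1/\sqrt{2}$. Your AM--GM bound $\alpha_n(t)\ge 2\sqrt{2}(N-n)+\sqrt{2}(2n-N)=N\sqrt{2}$ on $(0,1/\sqrt{2}]$ is also valid, and it has the advantage of not using the location or uniqueness of the minimizer $t_n^*$.
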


It may result useful to visualize Proposition~\ref{nonexis} in the particular case $N=4$. For $n=1$ and $n=2$ one has $2n\le 4$ and we have for every $\omega> \omega_n ^*$ exactly two standing waves profiles $\Theta_{\omega,1}=(\Psi_{a_A}, \Psi_{a_B}, \Psi_{a_B}, \Psi_{a_B})$, $\Theta_{\omega,2}=(\Psi_{a_A}, \Psi_{a_A}, \Psi_{a_B}, \Psi_{a_B})$ (see the two-top figures in Figure \ref{fig:placeholder}). Note that for $n=2$, $\omega_2^*= 2N^2/Z^2$. On the other hand, for $n=3$ one has $2n>4$, $t_3^*>1/\sqrt{2}$ and so we have the following options for $\omega$ and profiles: for $\omega > 2N^2/Z^2$, emerges another configuration $\Theta_{\omega,3}=(\Psi_{a_A}, \Psi_{a_A}, \Psi_{a_A}, \Psi_{a_B})$ (see the bottom-left graph in Figure \ref{fig:placeholder}) while for $\omega<2N^2/Z^2$ the resulting configuration $\widetilde{\Theta}_{\omega,3}=(\Psi_{a_B},\Psi_{a_B},\Psi_{a_B},\Psi_{a_A})$ is, up to permutation of the edges, the same configuration of the case $n=1$ (compare the bottom-right and top-left graphs in Figure~\ref{fig:placeholder}). We emphasize $\widetilde{\Theta}_{\omega,3}\neq \Theta_{\omega,1}$ since the definition of $t_1(\omega)$ is $n$-dependent. 

\begin{remark}\label{rem:w=2N^2/Z^2}
In the cluster configuration $2n>N$ and at the particular frequency $\omega=2N^2/Z^2$ the decreasing branch solution $t_1(2N^2/Z^2)$ takes the value $t_1=1/\sqrt{2}=t_N$ and therefore defines a continuous profile $\Theta_{2N^2/Z^2,\,n}$. However, the latter does not imply that only continuous configurations in $\dom$ exists when $\omega=2N^2/Z^2$ and $2n>N$. If one choose instead $t_1(2N^2/Z^2)$ to be the solution of \eqref{eq:alphan} in the increasing branch $(t_n^*,1)$ of $\alpha_{n}$ then $t_1>1/\sqrt{2}$ (see Figure~\ref{fig:p=3}) and the resulting (increasing branch) profile $\Theta_{2N^2/Z^2,\,n}$ would be genuinely discontinuous. 

It is worth mentioning that when $2n\le N$ the decreasing branch solution $t_1(2N^2/Z^2)$ is always less than $1/\sqrt{2}$ and therefore no pathological frequencies generate continuous stationary solutions. 
\end{remark}

\subsection{The existence of tail-type profiles for the cubic NLS via  a phase-plane analysis}
\label{rem:phase}
One can foresee the existence of tail-type profiles solutions of the stationary cubic NLS and in the set $\dom$ by looking at the phase-plane. In Figure~\ref{fig:phase-plane-weighted} the purple curve represents the right-hand piece of the homoclinic orbit. The derivative level $u'=d$ (dashed orange) represents the derivative matching condition $\Phi_{a_j}'(L)=d$. The points $P_1, P_2$ determined by the crossing at the derivative level $d$ determine the shifts $a_A, a_B$ and therefore their $u$-coordinates agrees with $u_1=\Psi_{a_A}(L)$ and $u_2=\Psi_{a_B}(L)$ respectively. The yellow trajectory represents the weighted middle point $\tfrac{n}{N}u_1+\tfrac{(N-n)}{N}u_2$ along the values of $d\in(0,-\omega/\sqrt{2})$. The red line agrees with the parametric line $(\tfrac{Z}{N}d,d)$. The sum condition in $\dom$ then represents the intersection $P_3$ at the common derivative level $d$ since its $u$-coordinate satisfies \[
\frac{n}{N}u_1+\frac{(N-n)}{N}u_2=\frac{n}{N}\Psi_{a_A}(L)+\frac{(N-n)}{N}\Psi_{a_B}(L)=\frac{Z}{N}d.
\]
    \begin{figure}[t]
  \centering
  \resizebox{0.4\linewidth}{!}{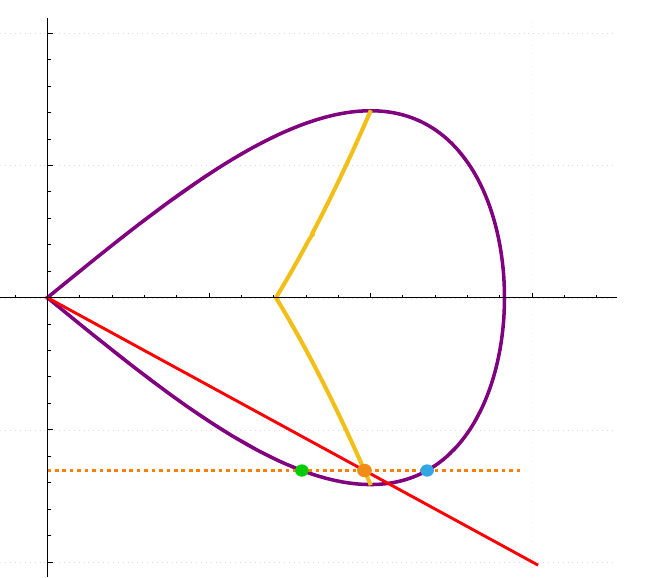}
  \caption{Phase plane analysis visualization of Remark~\ref{rem:phase}}
  \label{fig:phase-plane-weighted}
\end{figure}

\subsection{Existence of solutions for  \eqref{eq:alphan}: the formal analysis}
For fixed $n\in\{1,\dots,N-1\}$, equation \eqref{eq:alphan} admits a solution 
$t\in(0,1)$ if and only if $\alpha(\omega)\ge \alpha_n(t_n^*)$. Defining the threshold
\begin{equation}\label{eq:omega-n-star}
\omega_n^*:=\frac{\alpha_n(t_n^*)^2}{Z^2}
=\frac{\big(n^{2/3}+(N-n)^{2/3}\big)^{3}}{Z^2},
\end{equation}
existence is then guaranteed for $\omega\ge \omega_n^*$.

The map $n\mapsto \omega_n^*$ is symmetric with respect to $2n=N$, being strictly increasing for $2n<N$ and decreasing for $2n>N$, so it attains its minimum at both $n=1$ and $n=N-1$, defining the threshold
\begin{equation}\label{eq:omega-1-star}
\omega_1^*=\frac{\big(1+(N-1)^{2/3}\big)^{3}}{Z^2}
=\min_{1\le n\le N-1}\omega_n^*.
\end{equation}
No positive discontinuous configuration exists in $\dom$ whenever 
$\omega<\omega_1^*$ independent of $n$. Moreover, from the tail-type inversion analysis from 
the previous subsection, we conclude that when $2n>N$, no discontinuous standing wave carrying exactly $n$ 
type-$A$ profiles exists when $\omega\le 2N^2/Z^2$, whereas when $2n\le N$ the 
threshold is exactly $\omega_n^*$.

\medskip
We now record the existence of solutions to \eqref{eq:alphan} with a smooth 
dependence of $t_1=t_1(\omega)$ on the frequency parameter $\omega$.

\begin{proposition}\label{prop:existencet}
Let $Z<0$, $N\ge 2$, and fix $n\in\{1,\dots,N-1\}$. For every 
$\omega>\omega_n^*$ there exists a unique $t_1(\omega)\in(0,t_n^*)$ satisfying
\begin{equation}\label{eq:leftbranch}
\alpha_n(t_1(\omega))=-Z\sqrt{\omega}.
\end{equation}
Moreover, the map $\omega\mapsto t_1(\omega)$ is smooth on $(\omega_n^*,\infty)$ 
and satisfies $t_1'(\omega)<0$ with $\alpha_n'(t_1(\omega))<0$.
\end{proposition}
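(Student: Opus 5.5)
The proof is a one-variable monotonicity argument on the left branch of $\alpha_n$, followed by the implicit function theorem.

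First I will establish the strict monotonicity of $\alpha_n$ on $(0,t_n^*)$ directly from the derivative formula
\[
\alpha_n'(t)=-\frac{n}{t^2}+\frac{(N-n)t}{(1-t^2)^{3/2}}.
\]
Write $\alpha_n'(t)=\frac{1}{t^2}\bigl(-n+h(t)\bigr)$ with $h(t):=(N-n)t^3(1-t^2)^{-3/2}$. The function $h$ is strictly increasing on $(0,1)$, since both $t^3$ and $(1-t^2)^{-3/2}$ are increasing and positive. It satisfies $h(0^+)=0<n$ and $h(1^-)=+\infty$.

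It follows that $\alpha_n'$ has exactly one zero $t_n^*$, and that $\alpha_n'<0$ on $(0,t_n^*)$ and $\alpha_n'>0$ on $(t_n^*,1)$. Solving $h(t)=n$ gives $t^2/(1-t^2)=(n/(N-n))^{2/3}$, which recovers \eqref{eq:cubic-infimum}.

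Next comes existence and uniqueness. The restriction $\alpha_n:(0,t_n^*)\to\R$ is continuous and strictly decreasing. Its limit at $0^+$ is $+\infty$, because of the term $n/t$ with $n\ge1$, and its limit at $t_n^*$ is $\alpha_n(t_n^*)=(n^{2/3}+(N-n)^{2/3})^{3/2}$. It is therefore a bijection onto $(\alpha_n(t_n^*),\infty)$.

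For $\omega>\omega_n^*$, the definition \eqref{eq:omega-n-star} together with $Z<0$ gives $-Z\sqrt{\omega}>-Z\sqrt{\omega_n^*}=\alpha_n(t_n^*)$. Hence there is a unique $t_1(\omega)\in(0,t_n^*)$ with $\alpha_n(t_1(\omega))=-Z\sqrt\omega$. By construction $t_1(\omega)<t_n^*$, so $\alpha_n'(t_1(\omega))<0$ strictly.

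For smoothness, consider $F(t,\omega):=\alpha_n(t)+Z\sqrt{\omega}$ on $(0,t_n^*)\times(\omega_n^*,\infty)$. It is $C^\infty$, being a rational function of $t$ and $\sqrt{1-t^2}$ plus $\sqrt\omega$. Since $\partial_tF=\alpha_n'(t_1(\omega))\neq0$, the implicit function theorem gives a smooth local solution near each point. Uniqueness on $(0,t_n^*)$ identifies this local solution with $t_1(\omega)$, so $t_1$ is smooth on the whole interval. Differentiating the identity yields
\[
t_1'(\omega)=\frac{Z}{2\sqrt{\omega}\,\alpha_n'(t_1(\omega))}>0?
\]
Checking signs: $Z<0$ and $\alpha_n'<0$, so the quotient is positive, which would contradict the claimed $t_1'<0$. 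Redoing the differentiation: $\alpha_n'(t_1)\,t_1'=\frac{d}{d\omega}(-Z\sqrt\omega)=\frac{-Z}{2\sqrt\omega}>0$. Therefore $t_1'(\omega)=\frac{-Z}{2\sqrt\omega\,\alpha_n'(t_1(\omega))}<0$, consistent with the statement and with the picture: as $\omega$ grows, $\alpha$ grows, and on the decreasing branch $t_1$ moves toward $0$.

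The only step needing care is the sign and uniqueness analysis of $\alpha_n'$, which is what guarantees that the zero is the single point $t_n^*$. The rest is routine.
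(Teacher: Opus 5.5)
Your proof is correct and follows the paper's argument. Existence and uniqueness come from the strict monotonicity of $\alpha_n$ on $(0,t_n^*)$ together with $-Z\sqrt{\omega}>\alpha_n(t_n^*)$, and smoothness comes from the implicit function theorem applied to $F(t,\omega)=\alpha_n(t)+Z\sqrt{\omega}$. Your sign analysis of $\alpha_n'$ simply re-derives the unique critical point the paper takes from its discussion around \eqref{eq:cubic-infimum}. In a final write-up, delete the erroneous displayed formula with the question mark. Keep only the corrected $t_1'(\omega)=-Z/(2\sqrt{\omega}\,\alpha_n'(t_1(\omega)))<0$, which agrees with \eqref{eq:t1linha}.
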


\begin{proof}
Existence and uniqueness of $t_1(\omega)\in(0,t_n^*)$ with $\alpha_n'(t_1(\omega))<0$ follow from the strict 
monotonicity of $\alpha_n$ on $(0,t_n^*)$ together with the fact that 
$\alpha(\omega)>\alpha_n(t_n^*)$ whenever $\omega>\omega_n^*$. 

For smoothness, define $F:(0,t_n^*)\times(\omega_n^*,\infty)\to\R$ by
\[
F(t,\omega):=\alpha_n(t)+Z\sqrt{\omega}.
\]
Then $F(t_1(\omega),\omega)=0$ and 
$\partial_t F(t_1(\omega),\omega)=\alpha_n'(t_1(\omega))\neq 0$, so the 
implicit function theorem applies. Differentiating $F(t_1(\omega),\omega)=0$ 
in $\omega$ gives
\begin{equation}
    \label{eq:t1linha} t_1'(\omega)=-\frac{Z}{2\sqrt{\omega}\,\alpha_n'(t_1(\omega))}<0,
\end{equation}
since $Z<0$ and $\alpha_n'(t_1(\omega))<0$ on $(0,t_n^*)$.
\end{proof}

\medskip
As a direct consequence of Proposition~\ref{prop:existencet}, we obtain 
Theorem~\ref{thm:existence}. The choice of the shifts of the two tail types are recovered 
from $t_1(\omega)$ and are
governed by $\omega$ and by the relative size of $n$ and $N$. Note
\begin{equation}\label{t1-0}
t_1(\omega)\in(0,1/\sqrt{2}) 
\iff 
2n\le N, \ \text{or}\ \ 2n>N \ \text{and}\ \omega>\frac{2N^2}{Z^2},
\end{equation}
\begin{equation}\label{t1-1}
t_1(\omega)\in(1/\sqrt{2},1) 
\iff 
2n>N \ \text{and}\ \ \omega_n^*<\omega<\frac{2N^2}{Z^2}.
\end{equation}
Accordingly, the type-$A$ and type-$B$ shifts are defined by
\[
a_A(\omega):=
\begin{cases}
\operatorname{arctanh}\!\big(t_1(\omega)\big) & \text{if } 2n\le N,\text{ or } 2n>N\text{ and }\omega>2N^2/Z^2,\\[4pt]
\operatorname{arctanh}\!\big(t_N(\omega)\big) & \text{if } 2n>N\text{ and }\omega_n^*<\omega<2N^2/Z^2,
\end{cases}
\]
\[
a_B(\omega):=
\begin{cases}
\operatorname{arctanh}\!\big(t_N(\omega)\big) & \text{if } 2n\le N,\text{ or } 2n>N\text{ and }\omega>2N^2/Z^2,\\[4pt]
\operatorname{arctanh}\!\big(t_1(\omega)\big) & \text{if } 2n>N\text{ and }\omega_n^*<\omega<2N^2/Z^2.
\end{cases}
\]
The resulting standing wave has $n$ tails with shift $a_A$ and $N-n$ tails with shift $a_B$ when $t_1(\omega)\in(0,1/\sqrt{2})$,while when $t_1(\omega)\in(1/\sqrt{2},1)$ the roles are exchanged and the configuration 
has $n$ tails with shift $a_B$ and $N-n$ tails with shift $a_A$.

\section{Spectral Framework: Morse and Nullity Indices}\label{sec:spectral}

In this section we develop in detail the spectral analysis outlined in the introduction, using the decomposition \eqref{eq:l2orth} to localize the instability directions on each of the subspaces $\xa$, $\xb$ and $\xm$.

\medskip
Fix $\omega>\omega^*_n$ and consider the standing wave solution
\[
\mathbf{U}(x,t)=e^{i\omega t}\Theta_{\omega,n}(x),
\qquad 
\Theta_{\omega,n}\in D(\mathcal{H}_Z^{\delta'}),
\]
of \eqref{NLS} (see \eqref{eq:thetan}) provided by Theorem~\ref{thm:existence}. Introducing the action functional
\begin{equation}\label{S1}
\mathbf{S}(\mathbf{U})=E_Z(\mathbf{U})+\omega Q(\mathbf{U}),
\qquad \mathbf{U}\in H^1(\dg),
\end{equation}
we note that $\Theta_{\omega,n}$ is a critical point of $\mathbf{S}$, namely,
$\mathbf{S}'(\Theta_{\omega,n})=\mathbf{0}$.

Next, write $\mathbf{U}=\mathbf{U}_1+i\mathbf{U}_2$ and
$\mathbf{W}=\mathbf{W}_1+i\mathbf{W}_2$, where each $\mathbf{U}_j$ and
$\mathbf{W}_j$ ($j=1,2$) has real-valued components. A standard computation shows
that the Hessian of $\mathbf{S}$ at $\Theta_{\omega,n}$ can be expressed as
\begin{equation}\label{S2}
\mathbf{S}''(\Theta_{\omega,n})(\mathbf{U},\mathbf{W})
=
\langle \mathcal{L}_{+,Z}\mathbf{U}_1,\mathbf{W}_1\rangle
+
\langle \mathcal{L}_{-,Z}\mathbf{U}_2,\mathbf{W}_2\rangle,
\end{equation}
where $\mathcal{L}_{\pm,Z}$ are diagonal $N\times N$ operators given by
\begin{equation}\label{L+}
\begin{aligned}
\mathcal{L}_{+,Z}
&=\mathrm{diag}\Big(\underbrace{\mathcal{L}_{+,1},\dots,\mathcal{L}_{+,1}}_{n\ \text{times}}, \underbrace{ \mathcal{L}_{+,N}, \dots, \mathcal{L}_{+,N}}_{N-n\ \text{times}}\Big),\\[0.5em]
\mathcal{L}_{-,Z}
&=\mathrm{diag}\Big(\underbrace{\mathcal{L}_{-,1},\dots,\mathcal{L}_{-,1}}_{n\ \text{times}}, \underbrace{ \mathcal{L}_{-,N}, \dots, \mathcal{L}_{-,N}}_{N-n\ \text{times}}\Big)
\end{aligned}
\end{equation}
with 
\begin{equation}
\mathcal{L}_{+,j}:=-\partial_x^2+\omega-3\,\Psi_{a_j}^{\,2}, 
\qquad 
\mathcal{L}_{-,j}:=-\partial_x^2+\omega-\Psi_{a_j}^{\,2}, 
\qquad j\in\{1,N\}.
\end{equation}

The operators $\mathcal{L}_{\pm,Z}$ are self-adjoint on the common domain
$D(\mathcal{L}_{\pm,Z})\equiv D(\mathcal{H}_Z^{\delta'})$. Furthermore, since each component of
$\Theta_{\omega,n}\in D(\mathcal{H}_{Z}^{\delta'})$ solves the stationary system
\begin{equation}\label{eq:stationary-p}
-\Psi_{a_j}''+\omega\Psi_{a_j}-\Psi_{a_j}^{\,3}=0, \qquad j\in\{1,N\},
\end{equation}
it follows that $\mathcal{L}_{-,Z}\Theta_{\omega,n}=\mathbf{0}$, and consequently 
$\ker(\mathcal{L}_{-,Z})$ is nontrivial.

\medskip
It is well known from the GSS framework that a fruitful stability theory for the
profiles $\Theta_{\omega,n}$ requires a detailed study of the self-adjoint operators
$\mathcal{L}_{\pm,Z}$ on $D(\mathcal{H}_Z^{\delta'})$, and this represents the core of our
analysis. Before specializing to either operator, we record the basic vertex 
boundary values that will be used throughout this section.

\begin{lemma}\label{lem:vertex-vals}
Let $Z<0$, $N\ge2$ and $n\in\{1,\dots,N-1\}$ be fixed. Assume $\omega>\omega_n^*$, 
and additionally $\omega\neq 2N^2/Z^2$ when $2n>N$. Then
\begin{enumerate}
\item[\rm (i)] The boundary values of the half-line profiles satisfy
\begin{equation}\label{eq:Theta-boundary-p}
\Psi_{a_j}'(L)=-\sqrt{\omega}\,\Psi_{a_j}(L)\,t_j,
\qquad
\Psi_{a_j}''(L)=\omega\,\Psi_{a_j}(L)\bigl(2t_j^2-1\bigr),
\qquad j\in\{1,N\}.
\end{equation}
Moreover, 
$\{t_1,t_N\}=\{t_A(\omega),t_B(\omega)\}$ with $0<t_A<1/\sqrt{2}<t_B<1$, so 
exactly one of $\Psi_{a_1}''(L)$, $\Psi_{a_N}''(L)$ is strictly negative and 
the other strictly positive.

\item[\rm (ii)] Define
\begin{equation}\label{eq:Gdef}
\mathfrak H(t):=n\,(1-t^2)^{3/2}-(N-n)\,t^3.
\end{equation}
Then $\mathfrak H$ is connected to the function $\alpha_n$ from 
Section~\ref{sec:exists} through the algebraic identity
\begin{equation}\label{eq:Galpha}
\alpha_n'(t)
=-\frac{\mathfrak H(t)}{t^2\,(1-t^2)^{3/2}},
\qquad t\in(0,1).
\end{equation}
\end{enumerate}
\end{lemma}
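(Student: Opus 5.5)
The proof is a direct computation from the explicit soliton formula \eqref{soliton}, together with the facts about $t_1,t_N$ established in Section~\ref{sec:exists}.

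\emph{Part (i).} Write $\Psi_{a_j}(x)=\sqrt{2\omega}\,\operatorname{sech}(y)$ with $y=\sqrt{\omega}(x-L)+a_j$. Differentiating once gives $\Psi_{a_j}'=-\sqrt{\omega}\,\Psi_{a_j}\tanh(y)$. At $x=L$ we have $y=a_j$ and $\tanh(a_j)=t_j$, which is the first identity in \eqref{eq:Theta-boundary-p}.

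For the second identity, use the stationary equation \eqref{eq:stationary-p}: $\Psi''=\omega\Psi-\Psi^3$. At the vertex, $\Psi_{a_j}(L)^2=2\omega\operatorname{sech}^2(a_j)=2\omega(1-t_j^2)$. Therefore
\[
\Psi_{a_j}''(L)=\omega\Psi_{a_j}(L)\bigl(1-2(1-t_j^2)\bigr)=\omega\Psi_{a_j}(L)(2t_j^2-1).
\]
Next, by \eqref{eq:tAtB} and Proposition~\ref{prop:existencet}, $t_N=\sqrt{1-t_1^2}$. By \eqref{t1-0}--\eqref{t1-1}, $t_1\neq 1/\sqrt2$ under the stated hypotheses. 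For $2n\le N$ this holds because $t_1<t_n^*\le1/\sqrt2$. For $2n>N$ it holds because $\omega\ne\widehat\omega$, and $\alpha_n$ is strictly monotone on $(0,t_n^*)$ with $\alpha_n(1/\sqrt2)=N\sqrt2$.

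Hence $\{t_1,t_N\}=\{t_A,t_B\}$ with $t_A<1/\sqrt2<t_B$. Since $\Psi_{a_j}(L)>0$ and $\omega>0$, the sign of $\Psi_{a_j}''(L)$ is the sign of $2t_j^2-1$. This gives exactly one negative and one positive value.

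\emph{Part (ii).} Start from
\[
\alpha_n'(t)=-\frac{n}{t^2}+\frac{(N-n)t}{(1-t^2)^{3/2}}.
\]
Put both terms over the common denominator $t^2(1-t^2)^{3/2}$:
\[
\alpha_n'(t)=\frac{-n(1-t^2)^{3/2}+(N-n)t^3}{t^2(1-t^2)^{3/2}}=-\frac{\mathfrak H(t)}{t^2(1-t^2)^{3/2}}.
\]
This is \eqref{eq:Galpha}.

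Neither part presents a real obstacle. The only point needing care is excluding $t_1=1/\sqrt2$, which is exactly why the hypothesis $\omega\ne 2N^2/Z^2$ is imposed when $2n>N$.
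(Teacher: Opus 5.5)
Your proof is correct and follows essentially the same route as the paper. You differentiate the explicit soliton, use the stationary equation with $\Psi_{a_j}^2(L)=2\omega(1-t_j^2)$, rule out $t_1=1/\sqrt{2}$ via the hypothesis $\omega\neq 2N^2/Z^2$, and compute $\alpha_n'$ over a common denominator. Your case-by-case justification that $t_1\neq 1/\sqrt{2}$ ($t_1<t_n^*\le 1/\sqrt{2}$ when $2n\le N$; strict monotonicity of $\alpha_n$ with $\alpha_n(1/\sqrt{2})=N\sqrt{2}$ when $2n>N$) is a bit more explicit than the paper's, which simply appeals to its earlier remark on the transition frequency.
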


\begin{proof}
For (i), the expression of $\Psi_{a_j}'(L)$ follows directly from 
differentiating \eqref{soliton}. The formula for $\Psi_{a_j}''(L)$ comes from 
the stationary equation \eqref{eq:stationary-p} together with the soliton 
identity $\Psi_{a_j}^{\,2}(L)=2\omega(1-t_j^2)$. Indeed
\[
\Psi_{a_j}''(L)=\omega\,\Psi_{a_j}(L)-\Psi_{a_j}^{\,3}(L)
=\omega\,\Psi_{a_j}(L)\bigl[1-2(1-t_j^2)\bigr]
=\omega\,\Psi_{a_j}(L)\bigl(2t_j^2-1\bigr).
\]
Hence $\Psi_{a_j}''(L)=0\iff t_j=1/\sqrt{2}$. Under the existence regime, the 
exclusion $\omega\neq 2N^2/Z^2$ when $2n>N$ (the unique frequency at which the 
two shifts collapse to $1/\sqrt{2}$, cf.\ Remark~\ref{rem:w=2N^2/Z^2}) guarantees 
$\{t_1,t_N\}=\{t_A,t_B\}$ with $0<t_A<1/\sqrt{2}<t_B<1$, and therefore one of 
$\Psi_{a_1}''(L)$, $\Psi_{a_N}''(L)$ is negative and the other positive.

For (ii), a direct computation gives
\[
\alpha_n'(t)=-\frac{n}{t^2}+\frac{(N-n)\,t}{(1-t^2)^{3/2}}
=\frac{-n(1-t^2)^{3/2}+(N-n)\,t^3}{t^2\,(1-t^2)^{3/2}}
=-\frac{\mathfrak H(t)}{t^2\,(1-t^2)^{3/2}}.
\]
\end{proof}

We now move to the study of the operator $\mathcal{L}_{-,Z}$.

\subsection{Spectral analysis of $\mathcal{L}_{-,Z}$}

The operator
$\mathcal{L}_{-,Z}$ acts diagonally on $N$ components, the first $n$ of which 
carry the shift $a_1$ and the remaining $N-n$ the shift $a_N$. Let us show the operator is non-negative.

\begin{proposition}\label{prop:l2} 
Let $Z<0$, $N\ge 2$ and $n\in\{1,\dots,N-1\}$ be fixed. Assume 
$\omega>\omega_n^*$. The 
operator 
\begin{equation}
\label{L2} 
\mathcal{L}_{-,Z}:=\operatorname{diag}\bigl(\mathcal{L}_{-,1},\dots,\underset{n\text{th}}{\mathcal{L}_{-,1}},\underset{(n+1)\text{th}}{\mathcal{L}_{-,N}},\dots,\mathcal{L}_{-,N}\bigr),
\quad D(\mathcal{L}_{-,Z})=D(\mathcal{H}_Z^{\delta'}),
\end{equation}
where 
\begin{equation}
\label{LALB}
\mathcal{L}_{-,1}:=-\partial_x^2+\omega-\Psi_{a_1}^{\,2}, 
\qquad 
\mathcal{L}_{-,N}:=-\partial_x^2+\omega-\Psi_{a_N}^{\,2},
\end{equation}
satisfies
\begin{enumerate}
\item If $\dom\cap X_{j,\perp}\neq \{\mathbf{0}\}$ then 
$\mathcal{L}_{-,Z}\geqq 0$ on $\dom\cap X_{j,\perp}$, where $j\in\{1,N\}$.
\item $\mathcal{L}_{-,Z}\ge 0$ on $\dom \cap X_{\mathrm{mean}}$.
\item $\ker(\mathcal{L}_{-,Z})=\operatorname{span}\{\Theta_{\omega,n}\}\subset 
\dom\cap X_{\mathrm{mean}}$.
\end{enumerate}
\end{proposition}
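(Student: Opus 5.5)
The approach is the classical ground-state factorization of $\mathcal L_{-}$ along the positive profiles. Since each $\Psi_{a_j}>0$ on $[L,\infty)$ and solves \eqref{eq:stationary-p}, for any real $\mathbf v\in\dom$ we can write $v_j=\Psi_{a_j}w_j$. Then $\mathcal L_{-,j}v_j=-\Psi_{a_j}^{-1}(\Psi_{a_j}^2w_j')'$, and integration by parts on each edge gives
\[
\langle \mathcal L_{-,Z}\mathbf v,\mathbf v\rangle=\sum_{j=1}^N\int_L^\infty \Psi_{a_j}^2|w_j'|^2\,dx\;+\;\sum_{j=1}^N\Big(v_j'(L)v_j(L)-\frac{\Psi_{a_j}'(L)}{\Psi_{a_j}(L)}v_j(L)^2\Big).
\]
The integral term is non-negative. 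The whole task is therefore to control the vertex term $\mathcal B(\mathbf v)$ on each invariant piece of the decomposition \eqref{eq:decompdom} from Lemma~\ref{lem:domain-decomp-deltaprime}. The term needs no limit justification at infinity because of the exponential decay of $\Psi$ and the $H^2$ decay of $v$. By Lemma~\ref{lem:vertex-vals}(i), $-\Psi_{a_j}'(L)/\Psi_{a_j}(L)=\sqrt\omega\,t_j>0$.

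For (1), take $\mathbf v\in\dom\cap X_{1,\perp}$. The common derivative is $0$ (as in the proof of Lemma~\ref{lem:domain-decomp-deltaprime}), so $\mathcal B(\mathbf v)=\sqrt\omega\,t_1\sum_{j\in A}v_j(L)^2\ge0$, and $\mathcal L_{-,Z}\ge0$. The case $X_{N,\perp}$ is identical.

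For (2), take $\mathbf v=(v_A,\dots,v_A,v_B,\dots,v_B)\in\dom$ with common derivative $d$ and $nv_A(L)+(N-n)v_B(L)=Zd$. Then
\[
\mathcal B=Zd^2+\sqrt\omega\big(n t_1 v_A(L)^2+(N-n)t_Nv_B(L)^2\big),
\]
and since $Z<0$ the first term is negative. Here I use the constraint. By Cauchy--Schwarz,
\[
Z^2d^2=\big(nv_A(L)+(N-n)v_B(L)\big)^2\le\Big(\frac{n}{t_1}+\frac{N-n}{t_N}\Big)\big(nt_1v_A(L)^2+(N-n)t_Nv_B(L)^2\big).
\]
By \eqref{eq:kn}, the first factor equals $\alpha=-Z\sqrt\omega$. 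Hence $|Z|d^2\le\sqrt\omega(\cdots)$, that is, $\mathcal B\ge0$. So $\mathcal L_{-,Z}\ge0$ on $\dom\cap X_{\mathrm{mean}}$. I expect this to be the key step: the sum condition defining the profiles is exactly the equality case of the Cauchy--Schwarz inequality, which is consistent with $\Theta_{\omega,n}$ itself lying in the kernel.

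For (3), non-negativity of $\mathcal L_{-,Z}$ on all of $\dom$ follows from (1), (2) and the orthogonal, $\mathcal L_{-,Z}$-compatible splitting; the operator is diagonal, so cross terms vanish by the orthogonality argument of Lemma~\ref{lem:cluster-decomp}. Now let $\mathbf v\in\ker$ be real, which suffices since $\mathcal L_{-,Z}$ is real. Then each piece has zero form, because the form is non-negative piecewise and additive over the pieces.

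On $X_{j,\perp}$, zero form forces $w_j'\equiv0$, so $v_j=c_j\Psi_{a_j}$ on the cluster. It also forces $\sum_{j\in A} v_j(L)^2=0$, so every $c_j=0$, and that component vanishes.

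On $X_{\mathrm{mean}}$, zero form forces $v_A=c_A\Psi_{a_1}$ and $v_B=c_B\Psi_{a_N}$, together with equality in Cauchy--Schwarz. Equality means $(v_A(L),v_B(L))\propto(1/t_1,1/t_N)$. Using $\Psi_{a_j}(L)=\sqrt{2\omega}\mu/t_j$ from \eqref{eqmu}, this gives $c_A=c_B$. Alternatively, derivative matching at $L$ combined with $\Psi_{a_1}'(L)=\Psi_{a_N}'(L)\ne0$ forces $c_A=c_B$ directly.

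Thus $\ker\mathcal L_{-,Z}=\operatorname{span}\{\Theta_{\omega,n}\}\subset\dom\cap X_{\mathrm{mean}}$. The main point needing care is the kernel characterization via zero form, which is legitimate because $\mathcal L_{-,Z}\ge0$ and $\langle\mathcal L\mathbf v,\mathbf v\rangle=0$ then implies $\mathbf v\in\ker$, and conversely.
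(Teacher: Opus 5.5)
Your proof is correct and follows essentially the same route as the paper. Both arguments use the ground-state factorization of $\mathcal L_{-,j}$ along $\Psi_{a_j}$, integration by parts to isolate the vertex term, a block-by-block analysis over $X_{1,\perp}$, $X_{N,\perp}$ and $X_{\mathrm{mean}}$, and Sturm--Liouville together with derivative matching to identify the kernel. The only deviation is in item 2: the paper completes a square, while you use a weighted Cauchy--Schwarz bound with $n/t_1+(N-n)/t_N=-Z\sqrt{\omega}$. Both amount to the identity $\mathcal B=\frac{n(N-n)}{|Z|\,t_1t_N}\bigl(t_1v_A(L)-t_Nv_B(L)\bigr)^2$, and your formulation makes its equality case $v_j(L)\propto 1/t_j$ (i.e.\ $\Theta_{\omega,n}$) transparent. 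This identity also shows that the cross term in the paper's displayed square should carry a minus sign.
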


\begin{proof}
We emphasize that in what follows the 
assignment of $a_1$ and $a_N$ to either type-$A$ or type-$B$ may vary with 
$\omega$ (see the discussion after Theorem~\ref{thm:existence}), but the proof 
relies only on the vertex relations $\Psi_{a_j}(L)>0$, $\Psi_{a_j}'(L)< 0$ and 
on the common-derivative condition $\Psi_{a_1}'(L)=\Psi_{a_N}'(L)$, all of 
which hold uniformly in $\omega$ in the admissible range.

\smallskip
First note that the projections \eqref{eq:projs} are given as linear 
combinations of the components and therefore commute with derivatives and 
multiplications. Thus, $\ldos$ commutes with the projections \eqref{eq:projs} 
and the decomposition is preserved by $\ldos$ in the sense that $\ldos(S)\subset S$ 
for $S\in\{\dom\cap X_{1,\perp},\,\dom\cap X_{N,\perp},\,\dom\cap X_{\mathrm{mean}}\}$, 
with the possibility of at most one of the decoupling spaces being trivial.

\smallskip
Let $j\in\{1,N\}$. Since each profile $\Psi_{a_j}$ satisfies 
\eqref{eq:stationary-p}, dividing through by $\Psi_{a_j}>0$ gives 
$\Psi_{a_j}''/\Psi_{a_j}=\omega-\Psi_{a_j}^{\,2}$, and a direct computation 
yields the factorization identity
\begin{equation}\label{eq:factorLp}
-v''+\omega v-\Psi_{a_j}^{\,2} v
=-\frac{1}{\Psi_{a_j}}\frac{d}{dx}\!\left[\Psi_{a_j}^{\,2}\,\frac{d}{dx}\!\left(\frac{v}{\Psi_{a_j}}\right)\right], 
\qquad x>L.
\end{equation}
Without loss of generality assume each component of $\mathbf{v}$ is real-valued. 
Integrating by parts on $[L,\infty)$, using that the boundary terms at $+\infty$ 
vanish by the exponential decay of $\Psi_{a_j}$ and the $H^2$-decay of $v$, we 
obtain for each $j\in\{1,\dots,N\}$
\begin{equation}\label{eq:eachj}
\int_L^\infty v_j \,\mathcal{L}_{-,k} v_j \,dx 
= \int_L^\infty \Psi_{a_k}^{\,2} \left[\frac{d}{dx}\!\left(\frac{v_j}{\Psi_{a_k}}\right)\right]^2 dx 
+ v_j'(L)v_j(L)-v_j^2(L)\,\frac{\Psi_{a_k}'(L)}{\Psi_{a_k}(L)},
\end{equation}
where $k=1$ for $1\le j\le n$ and $k=N$ for $n+1\le j\le N$.

Summing \eqref{eq:eachj} over all $j$ we obtain
\begin{equation}
\label{eq:positive}
\begin{split}
\langle \ldos \mathbf{v}, \mathbf{v} \rangle 
&= \sum_{j=1}^n\int_L^\infty \Psi_{a_1}^{\,2} \left[\frac{d}{dx}\!\left( \frac{v_j}{\Psi_{a_1}}\right)\right]^2dx
+\sum_{j=n+1}^N\int_L^\infty \Psi_{a_N}^{\,2} \left[\frac{d}{dx}\!\left( \frac{v_j}{\Psi_{a_N}}\right)\right]^2dx\\
&\qquad +\sum_{j=1}^N v_j(L)v_j'(L)
-\sum_{j=1}^n v_j^2(L)\frac{\Psi_{a_1}'(L)}{\Psi_{a_1}(L)}
-\sum_{j=n+1}^N v_j^2(L)\frac{\Psi_{a_N}'(L)}{\Psi_{a_N}(L)}.
\end{split}
\end{equation}

\medskip
Let $\mathbf{v}\in \dom\cap X_{1,\perp}$. By definition of $X_{1,\perp}$, the 
components $v_{n+1},\dots,v_N$ vanish, and the boundary condition 
$v_1'(L)=\cdots=v_n'(L)=0$ holds. Equation \eqref{eq:positive} reduces to
\[
\langle \ldos \mathbf{v}, \mathbf{v} \rangle 
= \sum_{j=1}^n\int_L^\infty \Psi_{a_1}^{\,2} \left[\frac{d}{dx}\!\left( \frac{v_j}{\Psi_{a_1}}\right)\right]^2dx
-\sum_{j=1}^n v_j^2(L)\frac{\Psi_{a_1}'(L)}{\Psi_{a_1}(L)}\ge 0,
\]
since $\Psi_{a_1}'(L)<0$ and $\Psi_{a_1}(L)>0$ by Lemma~\ref{lem:vertex-vals}\,(i). 

Now suppose $\mathbf{v}\in \dom\cap X_{1,\perp}$ satisfies $\ldos\mathbf{v}=0$. 
This is equivalent to $\mathcal{L}_{-,1} v_j=0$ for all $j\in\{1,\dots,n\}$, and 
therefore each $v_j$ is of the form $v_j=c_j\Psi_{a_1}$ by Sturm--Liouville 
theory (see for instance \cite{BerShu91}). The 
boundary condition $v_j'(L)=0$ together with $\Psi_{a_1}'(L)\neq 0$ then forces 
$c_j=0$ for all $j$. We conclude $\mathbf{v}\equiv 0$, proving that $\ldos$ is 
strictly positive on $\dom\cap X_{1,\perp}\neq\{\mathbf{0}\}$. The analogous 
argument applies to $\dom\cap X_{N,\perp}$ (assuming it is non-trivial), 
yielding item 1.

\medskip
Now consider $\mathbf{v}\in\dom\cap X_{\mathrm{mean}}$, so that 
$v_1=\dots=v_n$ and $v_{n+1}=\dots=v_N$ on $[L,\infty)$. From the definition of 
this space and using the sum condition in $\dom$, the boundary terms in 
\eqref{eq:positive} can be rewritten as
\begin{equation}\label{eq:510}
\begin{split}
&n\left\{v_1'(L) v_1(L)-v_1^2(L)\frac{\Psi_{a_1}'(L)}{\Psi_{a_1}(L)} \right\}
+(N-n)\left\{v_N'(L) v_N(L)-v_N^2(L)\frac{\Psi_{a_N}'(L)}{\Psi_{a_N}(L)} \right\}\\
&=\frac{1}{Z}\!\left(\sum_{j=1}^N v_j(L)\right)^{\!2}
-\underbrace{\frac{\Psi_{a_1}'(L)}{n\,\Psi_{a_1}(L)}\!\left(\sum_{j=1}^n v_j(L) \right)^{\!2}
-\frac{\Psi_{a_N}'(L)}{(N-n)\,\Psi_{a_N}(L)}\!\left(\sum_{j=n+1}^N v_j(L) \right)^{\!2}}_{\RNum{1}}.
\end{split}
\end{equation}
Using the boundary conditions on $\dom$, the definition of $X_{\mathrm{mean}}$, 
and the common-derivative property from Lemma~\ref{lem:vertex-vals}\,(ii), 
$\RNum{1}$ in \eqref{eq:510} satisfies
\begin{equation}
\begin{split}
Z\,\RNum{1}&= \left(1+\frac{N-n}{n}\frac{\Psi_{a_N}(L)}{\Psi_{a_1}(L)}\right)\!\left(\sum_{j=1}^n v_j(L)\right)^{\!2}
+\left(1+\frac{n}{N-n}\frac{\Psi_{a_1}(L)}{\Psi_{a_N}(L)}\right)\!\left(\sum_{j=n+1}^N v_j(L)\right)^{\!2}\\
&=\bigl(n\,v_1(L)\bigr)^2+\bigl((N-n)\,v_N(L)\bigr)^2
+n(N-n)\!\left(v_1^2(L)\frac{\Psi_{a_N}(L)}{\Psi_{a_1}(L)}+v_N^2(L)\frac{\Psi_{a_1}(L)}{\Psi_{a_N}(L)}\right).
\end{split}
\end{equation}
Substituting back into \eqref{eq:510} we obtain
\[
\eqref{eq:510}=-\frac{n(N-n)}{Z}\!\left(v_1(L)\sqrt{\frac{\Psi_{a_N}(L)}{\Psi_{a_1}(L)}}\, +\, v_N(L)\sqrt{\frac{\Psi_{a_1}(L)}{\Psi_{a_N}(L)}}\,\right)^{\!2}\ge 0,
\]
since $Z<0$. This proves item 2.

\medskip
Finally, to prove item 3, we note items 1 and 2 imply that 
$\ker(\ldos)\subset\dom\cap X_{\mathrm{mean}}$. Assume that 
$\mathbf{v}=(v_1,\dots,\underset{n\text{th}}{v_1},\underset{(n+1)\text{th}}{v_N},\dots,v_N)\in\ker(\ldos)$. 
Then $\mathcal{L}_{-,1}v_1=0$ and $\mathcal{L}_{-,N}v_N=0$. Since 
$v_1,v_N\in H^2([L,\infty))$, Sturm--Liouville theory (see for instance 
\cite{BerShu91}) gives $v_1=c_1\Psi_{a_1}$ and $v_N=c_N\Psi_{a_N}$. From the 
derivative condition in $\dom$ we conclude 
$c_1\Psi_{a_1}'(L)=c_N\Psi_{a_N}'(L)$, and since 
$\Psi_{a_1}'(L)=\Psi_{a_N}'(L)\neq 0$ this forces $c_1=c_N$, proving 
$\mathbf{v}\in\operatorname{span}\{\Theta_{\omega,n}\}$.
\end{proof}

\subsection{Spectral analysis of $\luno$}

We now turn to $\mathcal L_{+,Z}$, beginning with its Morse index.

\begin{theorem}[Morse index of $\mathcal L_{+,Z}$]
\label{thm:morse1}
Let $Z<0$, $N\ge 2$ and $n\in\{1,\dots,N-1\}$ be fixed. Assume $\omega>\omega_n^*$. 
\begin{enumerate}
\item If $2n\le N$, or $2n>N$ and $\omega>2N^2/Z^2$, then
\[
n(\mathcal L_{+,Z})=n,
\qquad
n\bigl(\mathcal L_{+,Z}\,\bigr|_{\dom\cap X_{\mathrm{mean}}}\bigr)=1.
\]
\item If $2n>N$ and $\omega_n^*<\omega<2N^2/Z^2$, then
\[
n(\mathcal L_{+,Z})=N-n+1,
\qquad
n\bigl(\mathcal L_{+,Z}\,\bigr|_{\dom\cap X_{\mathrm{mean}}}\bigr)=2.
\]
\item If $2n>N$ and $\omega=2N^2/Z^2$, then \[
n(\mathcal L_{+,Z})=n\bigl(\mathcal L_{+,Z}\,\bigr|_{\dom\cap X_{\mathrm{mean}}}\bigr)=1.
\]
\end{enumerate}
\end{theorem}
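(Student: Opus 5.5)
The plan is to use the orthogonal decomposition of Lemma~\ref{lem:domain-decomp-deltaprime}. As in the proof of Proposition~\ref{prop:l2}, the projections \eqref{eq:projs} commute with $\mathcal L_{+,Z}$, because they are linear combinations of components and $\mathcal L_{+,Z}$ is constant on each cluster. Hence
\[
n(\mathcal L_{+,Z})=n\bigl(\mathcal L_{+,Z}|_{\dom\cap X_{1,\perp}}\bigr)+n\bigl(\mathcal L_{+,Z}|_{\dom\cap X_{N,\perp}}\bigr)+n\bigl(\mathcal L_{+,Z}|_{\dom\cap X_{\mathrm{mean}}}\bigr),
\]
and I will compute the three pieces separately.

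On $\dom\cap X_{1,\perp}$ every component satisfies $v_j'(L)=0$ and $\sum_{j\in A}v_j\equiv0$. Choose an orthonormal basis $e^{(1)},\dots,e^{(n-1)}$ of $\{c\in\R^n:\sum_j c_j=0\}$ and write $\mathbf v=\sum_k w_k e^{(k)}$ with $w_k\in H^2$, $w_k'(L)=0$. Then the restricted operator is unitarily equivalent to $n-1$ copies of the Neumann operator $\mathfrak L_{+,a_1}$ from \eqref{eq:Lplus-halfline-p-intro}. Lemma~\ref{lem:LA-negative-generalp} then gives $(n-1)\,n(\mathfrak L_{+,a_1})$ negative eigenvalues, that is, $n-1$ if $a_1<a_*$ and $0$ if $a_1\ge a_*$. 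In the same way $X_{N,\perp}$ contributes $(N-n-1)\,n(\mathfrak L_{+,a_N})$. By Theorem~\ref{thm:existence}, in case (1) we have $a_1=a_A<a_*<a_B=a_N$, which gives $n-1$ and $0$. In case (2) the roles are swapped, giving $0$ and $N-n-1$. In case (3) $a_1=a_N=a_*$, giving $0$ and $0$. Once the mean-space counts $1$, $2$, $1$ are established, the totals $n$, $N-n+1$, $1$ follow immediately.

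The main obstacle is the mean space. There $\mathbf v\leftrightarrow(v_1,v_N)$, with the weighted inner product $n\langle\cdot,\cdot\rangle+(N-n)\langle\cdot,\cdot\rangle$ and the vertex conditions $v_1'(L)=v_N'(L)=d$ and $nv_1(L)+(N-n)v_N(L)=Zd$.

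For the lower bound I test with $\Theta_{\omega,n}\in X_{\mathrm{mean}}$. Since $\mathcal L_{+,Z}\Theta=-2\Theta^3$, this gives $\langle\mathcal L_{+,Z}\Theta,\Theta\rangle<0$.

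For the upper bound I compare with the decoupled Neumann operator $\mathfrak L_{+,a_1}\oplus\mathfrak L_{+,a_N}$ in the same weighted space. Both operators are self-adjoint extensions of the symmetric restriction defined by $v_1'(L)=v_N'(L)=0$ and $nv_1(L)+(N-n)v_N(L)=0$. Each domain contains this restriction's domain with codimension one, so by the standard Krein/min-max argument the two Morse indices differ by at most one. The Neumann index is $n(\mathfrak L_{+,a_1})+n(\mathfrak L_{+,a_N})$, which equals $1$ in cases (1) and (2) and $0$ in case (3). Case (3) is therefore settled: the mean index is squeezed to exactly $1$.

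In cases (1) and (2) the count is either $1$ or $2$, and I must decide which. I would first try to identify the extra direction through the decaying solutions of $\mathfrak L_{+}u=0$ on each half-line, namely $u_j=\Psi_{a_j}'$. The Krein/Weyl-function criterion then reduces the question to the sign of a single boundary quantity at $\lambda=0$. This quantity is built from the data $\Psi_{a_j}'(L)$ and $\Psi_{a_j}''(L)=\omega\Psi_{a_j}(L)(2t_j^2-1)$, combined through the vertex conditions, and I expect it to be proportional to $\mathfrak H(t_1)$. By \eqref{eq:Galpha}, the sign of $\mathfrak H(t_1)$ is fixed by $\alpha_n'(t_1(\omega))<0$ (Proposition~\ref{prop:existencet}). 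This sign, combined with which of $\Psi_{a_1}''(L),\Psi_{a_N}''(L)$ is negative (Lemma~\ref{lem:vertex-vals}(i)), should give index $1$ when the $n$-cluster is of type $A$ and $2$ when it is of type $B$.

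As a fallback I would use a continuity argument in $\omega$. The index is constant on intervals where $\ker\mathcal L_{+,Z}|_{X_{\mathrm{mean}}}=\{0\}$, so it suffices to evaluate it at one frequency in each interval, for instance $\omega\to\infty$ in case (1) and $\omega$ near $2N^2/Z^2$ in case (2), where the perturbation from the continuous profile can be tracked explicitly.
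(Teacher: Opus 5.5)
There is a genuine gap at the decisive step. Several parts are correct:
\begin{itemize}
\item the block decomposition;
\item the decoupling counts (your unitary equivalence with $n-1$, resp.\ $N-n-1$, copies of the Neumann operator is a clean rephrasing of the paper's count);
\item the test direction $\Theta_{\omega,n}$, with $\mathcal L_{+,Z}\Theta_{\omega,n}=-2\Theta_{\omega,n}^3$;
\item the codimension-one comparison of the mean block with $\mathfrak L_{+,a_1}\oplus\mathfrak L_{+,a_N}$.
\end{itemize}
On the mean block this differs from the paper, which works only with $F(\lambda)=n/m_1(\lambda)+(N-n)/m_N(\lambda)$. Your sandwich settles case (3) more cheaply than the paper's equation $m(\lambda)=N/Z$. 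In cases (1)--(2), however, it only gives $1\le n\bigl(\mathcal L_{+,Z}|_{D(\mathcal H_Z^{\delta'})\cap X_{\mathrm{mean}}}\bigr)\le 2$, and the essential content of the theorem there is \emph{which} value occurs. That is exactly where the proposal stops being a proof. You \emph{expect} the relevant boundary quantity to be proportional to $\mathfrak H(t_1)$ and say the signs \emph{should} give $1$ or $2$, but you never identify or compute that quantity. The fallback is not carried out either. Constancy of the index on $\omega$-intervals with trivial kernel is fine. Evaluating it, though, needs an asymptotic analysis as $\omega\to\infty$, or a first-order perturbation of the mean-space kernel vector $\bigl((N-n)\Psi_{a_*}',\dots,-n\Psi_{a_*}',\dots\bigr)$ at $\widehat\omega$, and neither is done.

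To close the gap you need the paper's computation. For $\lambda<0$ let $u_{j,\lambda}$ be the decaying solution of $(\mathcal L_{+,j}-\lambda)u=0$ with $u(L)=1$, and put $m_j(\lambda):=u_{j,\lambda}'(L)$ (with $1/m_j:=0$ at Dirichlet eigenvalues). A $\lambda<0$ that is not a Neumann eigenvalue is a mean-space eigenvalue iff $F(\lambda)=Z$. Differentiating in $\lambda$ gives $m_j'(\lambda)=\int_L^\infty u_{j,\lambda}^2\,dx>0$. Hence $F$ is strictly decreasing between poles. Its only pole in $(-\infty,0)$ is the negative Neumann eigenvalue $\lambda_A$ of the type-$A$ cluster ($\lambda_1$ in case (1), $\lambda_N$ in case (2)), and $F(\lambda)\to 0^-$ as $\lambda\to-\infty$. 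At $\lambda=0$ one has $u_{j,0}=\Psi_{a_j}'/\Psi_{a_j}'(L)$, so $1/m_j(0)=-t_j/\bigl(\sqrt\omega\,(2t_j^2-1)\bigr)$. Using $-Z\sqrt\omega=n/t_1+(N-n)/t_N$ and $t_1^2+t_N^2=1$,
\[
F(0)-Z=\frac{\mathfrak H(t_1)}{\sqrt\omega\,t_1t_N\,(t_N^2-t_1^2)},
\]
with $\mathfrak H(t_1)>0$ because $\alpha_n'(t_1)<0$. Hence $F(0)>Z$ in case (1) and $F(0)<Z$ in case (2).

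Contrary to your sketch, this one sign is not sufficient on its own. In case (1) you need the monotonicity of $F$ to rule out roots on $(\lambda_A,0)$. In case (2), the lower bound from $\Theta_{\omega,n}$ does not exclude that the only negative eigenvalue is the root on $(\lambda_A,0)$, so you must also produce an eigenvalue below $\lambda_A$. You can get it either from $F\to 0^->Z$ at $-\infty$ (the paper's route), or, within your comparison framework, from the form inequality
\[
q_{\mathrm{mean}}(\mathbf v)=q_{\mathrm{Neu}}(\mathbf v)+\tfrac1Z\bigl(nv_1(L)+(N-n)v_N(L)\bigr)^2\le q_{\mathrm{Neu}}(\mathbf v)\quad\text{on } H^1\times H^1 .
\]
This puts the lowest mean-space eigenvalue at or below $\lambda_A$, and $\lambda_A$ itself is not a mean-space eigenvalue.
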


By convenience of the reader, we record a fast-reference summary of the Morse index of $\mathcal{L}_{+,Z}$ in Table~\ref{table:2} of Appendix D.

\begin{proof}
For short we write $t_1:=t_1(\omega)\in (0,1)$ and $t_N:=t_N(\omega)=\sqrt{1-t_1^2}$ 
throughout the proof. Recall from  
\eqref{t1-0} and \eqref{t1-1} that the three regimes in the statement of the theorem correspond 
exactly to $t_1\in(0,1/\sqrt{2})$ ($t_N> 1/\sqrt{2}$), $t_1\in(1/\sqrt{2},1)$ ($t_N< 1/\sqrt{2}$) and $t_1=1/\sqrt{2}=t_N$ respectively.

\medskip
\noindent\textbf{Case 1: $t_1\in(0,1/\sqrt{2})$ ($t_N>1/\sqrt{2}$).}

\smallskip
As in the proof of Proposition~\ref{prop:l2}, since $\luno$ commutes with the
projections \eqref{eq:projs} the decomposition \eqref{eq:decompdom} is invariant
under $\luno$, and we may compute the Morse index block by block.

\smallskip
For $j\in\{1,N\}$, any $\mathbf v\in \dom\cap X_{j,\perp}$ the derivative
matching condition in $D(\mathcal H_Z^{\delta'})$ forces the common derivative
at the vertex to be zero, hence $v_k'(L)=0$ for all $k$. On each active
edge the components of the diagonal operator $\luno$ reduces to the scalar Neumann problem
\begin{equation}
\label{eq:neumann}
\mathfrak L_{+,j}:=-\partial_x^2+\omega-3\,\Psi_{a_j}^{\,2},
\qquad
D(\mathfrak L_{+,j})=\{u\in H^2([L,\infty)):\ u'(L)=0\},
\end{equation}
where we write $\mathfrak L_{+,j}$ instead of simply $\mathcal L_{+,j}$ again to emphasize the Neumann-type domain of $\mathfrak L_{+,j}$. 

By Lemma~\ref{lem:LA-negative-generalp} with $p=3$, the threshold is 
$a_3^{\sharp}=\operatorname{arctanh}(1/\sqrt{2})\equiv a_*$, and the operator
$\mathfrak L_{+,j}$ has a simple negative Neumann eigenvalue if and only if
the shift satisfies $a_j< a_*$ (equivalently $t_j<1/\sqrt{2}$). 
Otherwise $\mathfrak L_{+,j}$ has no negative eigenvalue. In case of existence 
we denote the Neumann eigenvalue by $\lambda_j<0$ and the eigenfunction by 
$\chi_j$.  In this form, under the case assumption $t_1<1/\sqrt{2}$, the operator $\mathfrak L_{+,1}$ has a potential of type-$A$ and only 
$\lambda_1$ is guaranteed to exists leading to $n(\mathfrak L_{+,1})=1$, while $\mathfrak L_{+,N}\ge 0$ since $\mathfrak L_{+,N}$ has a potential of type-$B$.

\bigskip
 In the following, we account the Morse index contribution of the decoupling spaces $X_{j,\perp}$ for $\luno$.

\noindent On $\dom\cap X_{1,\perp}$: Note that if $\lambda\in(-\infty,0)$ is such that 
$\mathcal{L}_{+,Z}\mathbf{v}=\lambda \mathbf{v}$ with 
$\mathbf{0}\neq \mathbf{v}=(v_j)_{j=1}^N\in \dom\cap X_{1,\perp}$, then $\lambda=\lambda_1$. In fact, 
in such case one immediately gets
\[
\mathcal{L}_{+,1}v_k=\lambda v_k, \quad v_k'(L)=0, \quad k=1,\dots,n.
\]
From the former comments and since at least one of the 
$v_k$'s is not null, we conclude $\lambda=\lambda_1$.  In this form, it is not hard to see that (because $\chi_1\neq 0$)
\begin{equation}
    \label{eq:K1n-1} \ker\!\left(\mathcal{L}_{+,Z}\bigr|_{\dom\cap X_{1,\perp}}-\lambda_1 I\right)
=\left\{ \mathbf{v}\in \dom\cap X_{1,\perp} \Bigm| v_k=c_k \chi_1, \quad \sum_{k=1}^n c_k=0 \right\}
:=\mathcal{K}_{1}.
\end{equation}
Since $\mathcal{K}_{1}$ is isomorphic to the $(n-1)$-dimensional space 
$\{\mathbf{c}=(c_k)\in \R^n \mid \sum c_k=0\}$, we conclude
\[
n\bigl(\mathcal{L}_{+,Z}\bigr|_{\dom\cap X_{1,\perp}}\bigr)=n-1.
\]
(When $n=1$, the set $X_{1,\perp}$ is trivial and the contribution is $0$).

On $\dom\cap X_{N,\perp}$:  the null derivative matching reduces each active 
edge to $\mathfrak L_{+,N}$. Since $\mathfrak L_{+,N}\geqq 0$ by the remarks above, we get
\[
\langle\mathcal L_{+,Z}\mathbf v,\mathbf v\rangle
=\sum_{k=n+1}^N\langle\mathfrak L_{+,N}v_k,v_k\rangle\ge 0,
\qquad\mathbf v\in\dom\cap X_{N,\perp},
\]
and so we conclude $n\bigl(\mathcal L_{+,Z}|_{\dom\cap X_{N,\perp}}\bigr)=0$.

\bigskip
We now focus on the mean space $\xm$ contribution to the Morse index of  $\mathcal L_{+,Z}$, this will be the most delicate part of the spectral analysis. We want to show that $n\bigl(\mathcal L_{+,Z}|_{\dom\cap\xm}\bigr)=1$. 

\smallskip
On $\dom\cap \xm$,  the 
eigenvalue problem $\mathcal L_{+,Z}\mathbf v=\lambda \mathbf v$ with $\lambda<0$, 
$\mathbf{v}=(v_1,\dots,v_1,v_N,\dots,v_N)$ reduces to the coupled half-line
system
\begin{equation}\label{eq:mean-eig-ODE}
\mathcal L_{+,1}v_1=\lambda v_1,
\qquad
\mathcal L_{+,N}v_N=\lambda v_N,
\qquad x>L,
\end{equation}
subject to the $\delta'$ vertex conditions
\begin{equation}\label{eq:mean-vertex}
v_1'(L)=v_N'(L)=:d,
\qquad
n\,v_1(L)+(N-n)\,v_N(L)=Z\,d.
\end{equation}

\smallskip
Fix $j\in\{1,N\}$. Since for arbitrary $\lambda<0$ and each index $j$ the potential of the operator $\mathcal{L}_{+,j}-\lambda$ is bounded below by a positive constant (recall  $\sigma_{\mathrm{ess}}(\mathcal L_{+,j})=[\omega, +\infty)$), there exists (see for instance
\cite[Theorem~3.3-Chapter 2]{BerShu91}), up to a multiplicative constant, a unique
decaying solution $\rho_{j,\lambda}\in H^2([L,\infty))$ of the ODE's problem
\begin{equation}\label{Peigenvalue}
(\mathcal L_{+,j}-\lambda)\rho_{j,\lambda}=0,
\qquad \rho_{j,\lambda}(x)\to0\ \text{as }x\to+\infty.
\end{equation}
Therefore, if $\lambda$ is not an eigenvalue of the Dirichlet problem
\[
\mathcal{L}_{+,j}f=\lambda f, \qquad f\in H^2([L,\infty))\cap\{f\mid f(L)=0\},
\]
then $\rho_{j,\lambda}(L)\neq 0$ and we may normalize
$u_{j,\lambda}:=\rho_{j,\lambda}/\rho_{j,\lambda}(L)$ so that
$u_{j,\lambda}(L)=1$. Inspired by some of the ideas present in \cite{NPS,G} we define
\[
m_j(\lambda):=u_{j,\lambda}'(L).
\]

\medskip
At a (discrete) Dirichlet eigenvalue $\lambda_0<0$   one has $\rho_{j,\lambda_0}(L)=0$ but
$\rho_{j,\lambda_0}'(L)\neq 0$ (otherwise $\rho_{j,\lambda_0}\equiv 0$),
hence $m_j(\lambda)$ has a pole at $\lambda_0$ while
\[
\frac{1}{m_j(\lambda)}=\frac{\rho_{j,\lambda}(L)}{\rho_{j,\lambda}'(L)}
\longrightarrow 0
\qquad \mbox{as }\quad \lambda\to\lambda_0,
\]
so Dirichlet eigenvalues yield only removable discontinuities for
$\frac{1}{m_j}$.

\medskip
Now, from the discussion associated to problem \eqref{Peigenvalue} on $[L, +\infty)$ follows that any decaying solution of \eqref{eq:mean-eig-ODE} has the form
$v_j=c_j\,\rho_{j,\lambda}$, with $c_j$ constant. Imposing the vertex
conditions \eqref{eq:mean-vertex} and eliminating the constants, we get the
scalar eigenvalue equation
\begin{equation}\label{eq:F-lambda}
F(\lambda):=\frac{n}{m_1(\lambda)}+\frac{N-n}{m_N(\lambda)}=Z,
\end{equation}
where $F$ is understood with the continuous extension
$\frac{1}{m_j(\lambda)}:=0$ at Dirichlet eigenvalues.

\medskip
Returning to \eqref{Peigenvalue}, from Appendix~\ref{ap:2} we can differentiate
$(\mathcal L_{+,j}-\lambda)u_{j,\lambda}=0$ with respect to $\lambda$.
Denoting $w_{j,\lambda}:=\partial_\lambda u_{j,\lambda}$, we obtain
\[
(\mathcal L_{+,j}-\lambda)w_{j,\lambda}=u_{j,\lambda},
\qquad
w_{j,\lambda}(L)=0.
\]
Taking the $L^2$ inner product with $u_{j,\lambda}$ and integrating by parts
over $[L,\infty)$ yields
\[
\int_L^\infty u_{j,\lambda}^2\,dx
=
\langle (\mathcal{L}_{+,j}-\lambda)w_{j,\lambda},u_{j,\lambda}\rangle
=
\big[-w_{j,\lambda}'(x)u_{j,\lambda}(x)+w_{j,\lambda}(x)u_{j,\lambda}'(x)\big]_{L}^{\infty}.
\]
Since $u_{j,\lambda},w_{j,\lambda}$ decay exponentially (see  \cite{NPS}) and
$u_{j,\lambda}(L)=1$, $w_{j,\lambda}(L)=0$ we obtain
\[
\partial_\lambda u_{j,\lambda}'(L)
=
\int_L^\infty u^2_{j,\lambda}(x)\,dx>0.
\]
Therefore
\begin{equation}\label{eq:m-derivative}
m_j'(\lambda)>0
\qquad\text{for all }\lambda<0.
\end{equation}
Hence whenever $m_1(\lambda)m_N(\lambda)\neq0$ we have
\[
F'(\lambda)
=
-\frac{n\,m_1'(\lambda)}{m^2_1(\lambda)}
-\frac{(N-n)\,m_N'(\lambda)}{m^2_N(\lambda)}
<0,
\]
proving $F$ is strictly decreasing on each connected component of its domain.
The vertical asymptotes of $F$ occur at the zeros of $m_1$ or $m_N$, which
coincide with the negative Neumann eigenvalues of $\mathfrak L_{+,1}$ and
$\mathfrak L_{+,N}$ respectively. Since $\mathfrak L_{+,N}\ge 0$ has no
negative eigenvalue, $m_N$ has no zero on $(-\infty,0)$, and $F$ has exactly
one pole on $(-\infty,0)$, namely at $\lambda_1$. This pole partitions
$(-\infty,0)$ into two decreasing branches.

\medskip
As $\lambda\to-\infty$, the potentials $-3\,\Psi_{a_j}^{\,2}$
are negligible compared to $\omega-\lambda$, giving
$m_j(\lambda)\sim -\sqrt{\omega-\lambda}$ (see, for instance, formula (5.6) in \cite{NPS}). Then $F(\lambda)\to 0^-$. At the
pole $\lambda_1$ we have $F(\lambda)\to-\infty$ from the left and
$F\to\infty$ from the right. We have the following counting for solutions of
$F(\lambda)=Z$ (see Figure~\ref{fig:F-lambda} for illustration in the particular case $N=3$, $n=2$ and $Z=-1$):
\begin{figure}
    \centering
    \resizebox{\linewidth}{!}{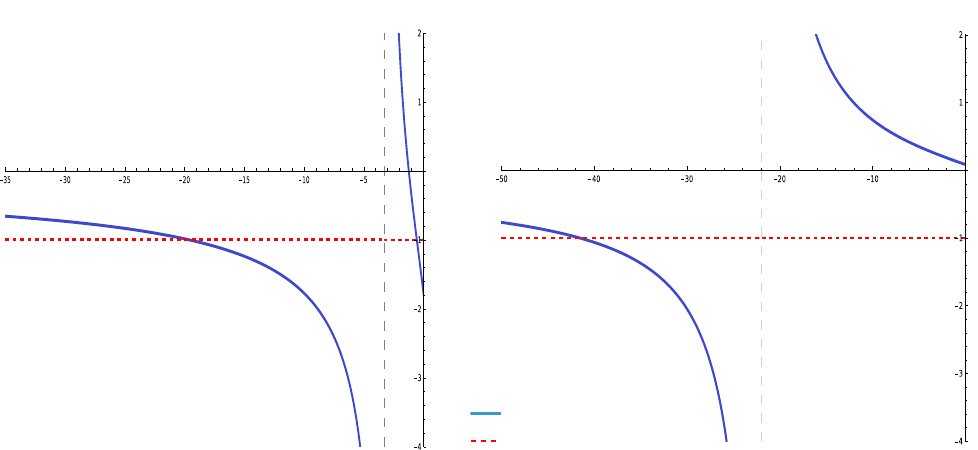}
    \caption{Two plots of $F(\lambda)$ for $N=3$, $n=2$ and $Z=-1$. The existence threshold $\omega_n^*\approx 17.32$ while the secondary threshold $2N^2/Z^2$ agrees with $18$. When $\omega>18$ (right-hand plot) The function $F$ has a unique pole at $\lambda_N$ 
    (resp.\ at $\lambda_1$ if $n=1$). The leftmost branch contributes one 
    solution to $F(\lambda)=Z$; the rightmost branch contributes none ($F(0)>Z$). When $\omega_n^*<\omega<18$, $F(0)<Z$ and therefore there is an extra negative $\lambda$ solving $F(\lambda)=Z$}
    \label{fig:F-lambda}
\end{figure}
\begin{itemize}
\item On the leftmost branch $(-\infty,\lambda_1)$, $F$ runs from $0^-$ down
to $-\infty$, crossing $Z<0$ exactly once.
\item On the rightmost branch $(\lambda_1,0)$, $F$ decreases from $+\infty$
to $F(0^-)$; an additional solution of $F(\lambda)=Z$ exists if and only if $F(0)\le Z$.
\end{itemize}

\medskip
We show $F(0)>Z$ strictly. Differentiating \eqref{eq:stationary-p} in $x$ 
shows that $\Psi_{a_j}'$ solves $\mathcal L_{+,j}\Psi_{a_j}'=0$ and decays 
exponentially at infinity, hence $u_{j,0}(x)=\Psi_{a_j}'(x)/\Psi_{a_j}'(L)$ 
and $1/m_j(0)=\Psi_{a_j}'(L)/\Psi_{a_j}''(L)$. Substituting the boundary 
values \eqref{eq:Theta-boundary-p} from Lemma~\ref{lem:vertex-vals} - (i),
\begin{equation}
    \label{eq:1-over-m} 
\frac{1}{m_j(0)}=\frac{\Psi_{a_j}'(L)}{\Psi_{a_j}''(L)}
=\frac{-\sqrt{\omega}\,\Psi_{a_j}(L)\,t_j}{\omega\,\Psi_{a_j}(L)\,(2t_j^2-1)}
=\frac{-t_j}{\sqrt{\omega}\,(2t_j^2-1)}.
\end{equation}
Writing $D_1:=1-2t_1^2>0$ and $D_N:=2t_N^2-1>0$,
\[
F(0)=\frac{n\,t_1}{\sqrt{\omega}\,D_1}-\frac{(N-n)\,t_N}{\sqrt{\omega}\,D_N}.
\]
Combining with the vertex sum condition 
$Z\sqrt{\omega}=-\bigl(n/t_1+(N-n)/t_N\bigr)$ we obtain
\[
\sqrt{\omega}\,(F(0)-Z)
=\frac{n(1-t_1^2)}{t_1\,D_1}-\frac{(N-n)(1-t_N^2)}{t_N\,D_N}.
\]
Substituting the level-set identity $t_1^2+t_N^2=1$ and noting that 
$D_1=1-2t_1^2=t_N^2-t_1^2=2t_N^2-1=D_N$, we obtain after combination over a 
common denominator
\begin{equation}\label{eq:F0minusZ-bis}
F(0)-Z
=\frac{n\,t_N^{3}-(N-n)\,t_1^{3}}{\sqrt{\omega}\,t_1\,t_N\,(t_N^2-t_1^2)}
=\frac{\mathfrak H(t_1)}{\sqrt{\omega}\,t_1\,t_N\,(t_N^2-t_1^2)},
\end{equation}
with \[
\mathfrak H(t_1)=n\,(1-t_1^2)^{3/2}-(N-n)\,t_1^{3}=n\,t_N^{3}-(N-n)\,t_1^{3}.
\] as in \eqref{eq:Gdef}. 

The denominator of \eqref{eq:F0minusZ-bis} is strictly positive in the regime 
$t_1\in(0,1/\sqrt{2})$, $t_N\in(1/\sqrt{2},1)$ (this case hypothesis). Hence the sign of $F(0)-Z$ 
coincides with the sign of $\mathfrak H(t_1)$, which is strictly positive by 
Lemma~\ref{lem:vertex-vals} - (ii) since on $(0,1/\sqrt{2})$ the denominator in 
\eqref{eq:Galpha} is strictly positive and $\alpha_n$ is strictly decreasing.

We conclude $F(0)-Z>0$ strictly, and the rightmost branch contributes no 
extra solution.

\bigskip
From all the above, summing the three contributions in Case~1:
\begin{equation}\label{Morse1}
n(\mathcal L_{+,Z})=(n-1)+0+1=n,
\qquad 
n\bigl(\mathcal L_{+,Z}|_{\dom\cap\xm}\bigr)=1.
\end{equation}
Finally, from \eqref{t1-0}, we conclude the Morse index values in \eqref{Morse1} happen for $2n\le N$, or $2n>N$ and $\omega>2N^2/Z^2$.

\bigskip
\noindent\textbf{Case 2: $t_1\in(1/\sqrt{2},1)$  ($t_N<1/\sqrt{2}$).}

\smallskip
The structure of the proof is identical to Case~1; we only highlight the 
differences arising from the swap of the position of $t_1$ and $t_N$ relative 
to $1/\sqrt{2}$. Now $t_N=\sqrt{1-t_1^2}\in(0,1/\sqrt{2})$ with $t_N<t_1$, so by 
Lemma~\ref{lem:LA-negative-generalp} the operator $\mathfrak L_{+,N}$ has a 
simple negative Neumann eigenvalue $\lambda_N<0$ with eigenfunction $\chi_N$, 
while $\mathfrak L_{+,1}\ge 0$.

\smallskip
The decoupling-space analysis carries over with the roles of $\{1\}$ and 
$\{N\}$ exchanged in the sense that $\mathcal L_{+,Z}|_{\dom\cap X_{1,\perp}}\ge 0$, hence 
$n(\mathcal L_{+,Z}|_{\dom\cap X_{1,\perp}})=0$, whereas the $\lambda_N$-block 
on $\dom\cap X_{N,\perp}$ contributes a kernel isomorphic to 
$\{\mathbf c=(c_k)\in\R^{N-n}\mid \sum c_k=0\}$, of dimension $N-n-1$ and
\[
n\bigl(\mathcal L_{+,Z}|_{\dom\cap X_{N,\perp}}\bigr)=N-n-1.
\]
(When $N-n=1$, that is $n=N-1$, the set $X_{N,\perp}$ is trivial and the 
contribution is again $0$).

\smallskip
On the mean space, the construction of $m_j(\lambda)$ and the function 
$F(\lambda)$ as in \eqref{eq:F-lambda} is unchanged; so are the monotonicity 
properties \eqref{eq:m-derivative} and the asymptotics $F(\lambda)\to 0^-$ as 
$\lambda\to-\infty$. The only structural change is that the unique pole of 
$F$ on $(-\infty,0)$ is now located at $\lambda_N$ instead of $\lambda_1$ 
(since now $m_1$, not $m_N$, has no zero on $(-\infty,0)$). The leftmost 
branch $(-\infty,\lambda_N)$ still contributes exactly one solution of 
$F(\lambda)=Z$ on the same grounds as in Case~1.

\smallskip
The sign of $F(0)-Z$ is the crucial change. Repeating the computation 
leading to \eqref{eq:1-over-m}–\eqref{eq:F0minusZ-bis} verbatim, but now with 
$D_1=2t_1^2-1>0$, $D_N=1-2t_N^2>0$, $D_1=D_N=t_1^2-t_N^2$, the same combination 
over a common denominator yields
\[
F(0)-Z
=\frac{(N-n)\,t_1^{3}-n\,t_N^{3}}{\sqrt{\omega}\,t_1\,t_N\,(t_1^2-t_N^2)}
=-\frac{\mathfrak H(t_1)}{\sqrt{\omega}\,t_1\,t_N\,(t_1^2-t_N^2)}.
\]
The denominator is strictly positive in this case regime $t_1\in(1/\sqrt{2},1)$, 
$t_N\in(0,1/\sqrt{2})$, and $\mathfrak H(t_1)>0$ still holds because $t_1$ lies 
on the decreasing branch of $\alpha_n$, where $\alpha_n'(t_1)<0$ forces 
$\mathfrak H(t_1)>0$ via \eqref{eq:Galpha} (see Figure \ref{fig:p=3}). Therefore $F(0)-Z<0$ strictly, and 
on the rightmost branch $(\lambda_N,0)$ the function $F$ decreases from 
$+\infty$ to $F(0^-)<Z$, crossing $Z$ exactly once (see once more Figure \ref{fig:F-lambda} for illustration).  This contributes one 
additional eigenvalue on the mean space.

\smallskip
Summing the three contributions in Case~2:
\begin{equation}\label{Morse2}
n(\mathcal L_{+,Z})=0+(N-n-1)+2=N-n+1,
\qquad
n\bigl(\mathcal L_{+,Z}|_{\dom\cap\xm}\bigr)=2.
\end{equation}
As before, from \eqref{t1-1}, the Morse index relations in \eqref{Morse2} happen exactly when $2n>N$ and $\omega_n^*<\omega<2N^2/Z^2$ as stated.

\medskip
\noindent\textbf{Case 3}: $t_1=1/\sqrt{2}=t_N$. 

\smallskip
At this frequency $a_1=a_N=:a_*$ with $a_*=\operatorname{arctanh}(1/\sqrt2)$, both
clusters carry the same half-line profile $\Psi_{a_*}:=\Psi_{a_1}=\Psi_{a_N}$, and
$\mathcal L_{+,Z}=\operatorname{diag}(\mathcal L_{+,*},\dots,\mathcal L_{+,*})$
with $\mathcal L_{+,*}=-\partial_x^2+\omega-3\Psi_{a_*}^2$. Since $\mathcal L_{+,*}$
still commutes with the projections \eqref{eq:projs}, the decomposition
\eqref{eq:decompdom} remains invariant and we again argue block by block. The
only --- but decisive --- difference with Case~1 is that, by
Lemma~\ref{lem:vertex-vals}-(i) evaluated at $t_*=1/\sqrt2$,
\begin{equation}\label{eq:Psi-star-pp-zero}
\Psi_*''(L)=\omega\,\Psi_*(L)\,(2t_*^2-1)=0,
\end{equation}
so the quantity $1/m_j(0)=\Psi_{a_j}'(L)/\Psi_{a_j}''(L)$ from \eqref{eq:1-over-m}
is no longer defined and the $F(0)$ analysis of Case~1 must be replaced.

\smallskip
\noindent\emph{Decoupling blocks.} Exactly as in Case~1, derivative matching
forces $v_k'(L)=0$ on every active edge, decoupling the problem into the Neumann
operator $\mathfrak L_{+,*}:=\mathcal L_{+,*}$ with $u'(L)=0$. By
Lemma~\ref{lem:LA-negative-generalp}, the negative Neumann eigenvalue present for
$t<1/\sqrt2$ now sits exactly at $0$, with eigenfunction $\Psi_{a_*}'$ (indeed
$(\Psi_*')'(L)=\Psi_*''(L)=0$ by \eqref{eq:Psi-star-pp-zero}), and
$\mathfrak L_{+,*}\ge 0$. Hence both decoupling blocks contribute $0$ to the
Morse index,
\[
n\bigl(\mathcal L_{+,Z}|_{\dom\cap X_{1,\perp}}\bigr)
=n\bigl(\mathcal L_{+,Z}|_{\dom\cap X_{N,\perp}}\bigr)=0.
\]

\smallskip
\noindent\emph{Mean block.} Since $\mathcal L_{+,1}=\mathcal L_{+,N}=\mathcal
L_{+,*}$, the two functions $m_1,m_N$ of Case~1 collapse to a single
$m(\lambda):=m_1(\lambda)=m_N(\lambda)$ (see equation (5.5) in \cite{NPS}), and the vertex conditions
\eqref{eq:mean-vertex} read $v_1(L)\,m(\lambda)=v_N(L)\,m(\lambda)=d$ together with
$n\,v_1(L)+(N-n)\,v_N(L)=Z\,d$. Where $m(\lambda)\neq0$ one has
$v_1(L)=v_N(L)=d/m(\lambda)$, and the system reduces to the scalar equation
\begin{equation}\label{eq:trans-mean-eig}
m(\lambda)=\frac{N}{Z}.
\end{equation}
The monotonicity $m'(\lambda)>0$ from \eqref{eq:m-derivative} and the asymptotics
$m(\lambda)\sim-\sqrt{\omega-\lambda}\to-\infty$ as $\lambda\to-\infty$ carry over
verbatim from Case~1, while \eqref{eq:1-over-m} is now replaced by $m(0)=\Psi_*''(L)/\Psi_*'(L)=0$. Thus $m$ is a strictly increasing
bijection from $(-\infty,0)$ to $(-\infty,0)$ with no pole (recall $\mathfrak L_{+,*}\ge 0$),
and since $N/Z<0$ equation \eqref{eq:trans-mean-eig} has exactly one root
$\lambda_{\mathrm{mean}}<0$. The equation $m(\lambda)=0$ has its only solution on
$(-\infty,0]$ at $\lambda=0$ and contributes nothing to the negative count. Hence
\[
n\bigl(\mathcal L_{+,Z}|_{\dom\cap\xm}\bigr)=1.
\]

\smallskip
Summing the three blocks,
\[
n(\mathcal L_{+,Z})=0+0+1=1=n\bigl(\mathcal L_{+,Z}|_{\dom\cap\xm}\bigr),
\]
which is the content of item~(3).

This completes the proof.
\end{proof}

We now focus on the kernel of $\mathcal{L}_{+,Z}.$

\begin{proposition}[Kernel of $\mathcal{L}_{+,Z}$]
\label{prop:ker-L1Z}
Let $Z<0$, $N\ge 2$ and $n\in\{1,\dots,N-1\}$ be fixed. Assume 
$\omega>\omega_n^*$. Then 
\begin{enumerate}
    \item If $2n\le N$ or if $2n>N$ with $\omega\neq 2N^2/Z^2$ then $\ker(\mathcal{L}_{+,Z})=\{\mathbf{0}\}$.
    \item If $2n>N$ and $\omega=2N^2/Z^2$ then $\operatorname{dim}(\ker(\mathcal{L}_{+,Z}))=N-1$. 
\end{enumerate}

\end{proposition}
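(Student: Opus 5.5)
The plan is to use the invariant decomposition \eqref{eq:decompdom}. Since $\mathcal L_{+,Z}$ commutes with the projections \eqref{eq:projs}, its kernel splits as the direct sum of the kernels of its restrictions to $\dom\cap X_{1,\perp}$, $\dom\cap X_{N,\perp}$ and $\dom\cap\xm$. We compute each piece separately, reusing the objects $\mathfrak L_{+,j}$, $m_j$ and $F$ built in the proof of Theorem~\ref{thm:morse1}.

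\emph{Decoupling blocks.} On $\dom\cap X_{j,\perp}$ the common vertex derivative vanishes, so a kernel element solves $\mathfrak L_{+,k}v_k=0$ with the Neumann condition $v_k'(L)=0$ on each active edge. The decaying solution of $\mathcal L_{+,k}u=0$ is unique up to a constant and equals $\Psi_{a_k}'$. It satisfies the Neumann condition iff $\Psi_{a_k}''(L)=0$, i.e.\ iff $t_k=1/\sqrt2$ by Lemma~\ref{lem:vertex-vals}(i).

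Away from $\widehat\omega$ (when $2n>N$), or always when $2n\le N$, we have $t_1,t_N\neq1/\sqrt2$, so both decoupling kernels are trivial.

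At $2n>N$, $\omega=\widehat\omega$, every edge carries $\Psi_{a_*}$. The kernel on $\dom\cap X_{1,\perp}$ is then $\{(c_k\Psi_{a_*}')_{k\in A}:\sum c_k=0\}$, of dimension $n-1$. The kernel on $\dom\cap X_{N,\perp}$ has dimension $N-n-1$. Together they give $N-2$.

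\emph{Mean block.} A kernel element has the form $v_1=c_1\Psi_{a_1}'$, $v_N=c_N\Psi_{a_N}'$ (the $\lambda=0$ decaying solutions), subject to \eqref{eq:mean-vertex}.

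In the generic regime we argue as follows. If $\Psi_{a_j}'(L)\neq0$ (always true) and $\Psi_{a_j}''(L)\neq0$, eliminating constants gives exactly $F(0)=Z$. But \eqref{eq:F0minusZ-bis} and its Case~2 analogue show $F(0)-Z=\pm\mathfrak H(t_1)/(\cdots)\neq0$, because $\mathfrak H(t_1)>0$ on the decreasing branch by \eqref{eq:Galpha} and Proposition~\ref{prop:existencet}. The degenerate possibility $d=0$ would force $c_j\Psi_{a_j}''(L)=0$, hence $c_j=0$. So the mean kernel is trivial, which proves item~1.

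At $\widehat\omega$ we have $d=c_j\Psi_{a_*}''(L)=0$ automatically. The sum condition becomes $\Psi_{a_*}(L)\cdot(\text{nothing})$; more precisely, $n\,c_1\Psi_{a_*}'(L)+(N-n)c_N\Psi_{a_*}'(L)=0$. This is one linear constraint on $(c_1,c_N)$, so the mean kernel is $1$-dimensional. The total is $(N-2)+1=N-1$, which proves item~2.

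The main obstacle is justifying rigorously that $\lambda=0$ is handled by the $F$-formalism, i.e.\ that $0$ is not a Dirichlet eigenvalue and the elimination is legitimate, and that no nonzero $d$ slips through. I would avoid $F$ altogether and work directly with the explicit kernel candidates $c_j\Psi_{a_j}'$ and the $2\times2$ linear system from \eqref{eq:mean-vertex}. Its determinant is proportional to $n\Psi_{a_1}'(L)/\Psi_{a_1}''(L)+(N-n)\Psi_{a_N}'(L)/\Psi_{a_N}''(L)-Z$, i.e.\ to $F(0)-Z$ times $\Psi_{a_1}''(L)\Psi_{a_N}''(L)$, and we check its sign via \eqref{eq:F0minusZ-bis}.
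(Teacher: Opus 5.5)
Your argument is correct. Its analytic core is the paper's: kernel candidates are $c_k\Psi_{a_k}'$ by Sturm--Liouville, the $\delta'$ conditions reduce a nontrivial element to the compatibility relation $F(0)=Z$, and this is ruled out because it is equivalent to $\mathfrak H(t_1)=0$, i.e.\ $\alpha_n'(t_1)=0$, which cannot happen for $\omega>\omega_n^*$.

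What differs is the bookkeeping. The paper does not use the decomposition \eqref{eq:decompdom} in this proof; it treats all $N$ components at once. There, derivative matching $c_k\Psi_{a_k}''(L)=d_0$ alone forces $c_k=c_1$ on $A$ and $c_k=c_N$ on $B$ whenever $\Psi_{a_k}''(L)\neq 0$. The sum condition then yields the scalar compatibility equation, which the paper re-derives by subtracting $-Z\sqrt\omega=\alpha_n(t_1)$ rather than quoting \eqref{eq:F0minusZ-bis}. At $\omega=\widehat\omega$ it simply reads off $\sum_{j=1}^N c_j=0$.

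Your block-by-block route requires the commutation of $\mathcal L_{+,Z}$ with the projections, which you correctly note. In exchange it gives a finer description of the degenerate kernel: $n-1$ directions in $X_{1,\perp}$, $N-n-1$ in $X_{N,\perp}$, and one in $\xm$. This shows that even the restriction to $\xm\cap\dom$ has a nontrivial kernel at $\widehat\omega$, so passing to the mean space (as in part (2) of Theorem~\ref{thm:insta}) does not remove the obstruction at that frequency.

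The concern in your last paragraph is moot. Since $\Psi_{a_j}'(L)\neq 0$, $0$ is never a Dirichlet eigenvalue, and the direct $2\times 2$ elimination you propose is exactly what the paper does.
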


\begin{proof}
We write $t_1:=t_1(\omega)$ and $t_N:=t_N(\omega)=\sqrt{1-t_1^2}$ throughout. 

Let us prove item (1). Both case assumptions $2n\le N$ or $2n>N$ with $\omega\neq 2N^2/Z^2$ guarantee that $t_1\neq 1/\sqrt{2}$ (see Figure~\ref{fig:p=3}). Therefore $t_1\neq t_N$ 
and the two profiles $\Psi_{a_1},\Psi_{a_N}$ are genuinely distinct, so 
Lemma~\ref{lem:vertex-vals} applies with $\Psi_{a_j}''(L)\neq 0$.

\smallskip
Differentiating \eqref{eq:stationary-p} in $x$ shows that $\Psi_{a_j}'$ solves 
$\mathcal{L}_{+,j}\Psi_{a_j}'=0$ and decays exponentially at infinity.  Then, by the implications of the ODE's problem \eqref{Peigenvalue} with $\lambda=0$, the decaying solution space is 
one-dimensional with any $v_j\in H^2([L,\infty))$ satisfying 
$\mathcal{L}_{+,j}v_j=0$ being of the form
\begin{equation}\label{eq:v=cTheta'}
v_j(x)=c_j\,\Psi_{a_j}'(x),\qquad x\ge L,\qquad j\in\{1,N\}.
\end{equation}

\smallskip
Let $\mathbf v=(v_1,\dots,v_N)\in\ker(\mathcal{L}_{+,Z})$. Then 
\eqref{eq:v=cTheta'} implies the clustered form
 \begin{equation}\label{eq:cluster-form}
v_k=c_k\,\Psi_{a_1}',\ \,\   k=1,\dots,n,\qquad v_k=c_k\,\Psi_{a_N}',\ \ \ k=n+1,\dots,N,
\end{equation}
for some constants $c_k\in\R$. The relations $v'_j(L)=c_j\,\Psi_{a_j}''(L)$ and the $\delta'$ vertex conditions 
\begin{equation}\label{eq:deltaprime-BC-v}
v_1'(L)=\cdots=v_N'(L)=:d_0,
\qquad
\sum_{k=1}^N v_k(L)=Z\,d_0,
\end{equation}
imply immediately $c_k=c_1$ for $k=1,\dots,n$ and  $c_k=c_N$ for   $k=n+1,\dots,N$. Hence $d_0=c_1\Psi_{a_1}''(L)=c_N\Psi_{a_N}''(L)$. Thus, 
\begin{equation}\label{eq:cA-cB-d0}
c_1=\frac{d_0}{\Psi_{a_1}''(L)},\qquad c_N=\frac{d_0}{\Psi_{a_N}''(L)}.
\end{equation}
Using the common derivative of the standing wave profiles $d=\Psi_{a_1}'(L)=\Psi_{a_N}'(L)\neq 0$, the sum 
condition above gives
\[
Z\,d_0
=\sum_{k=1}^N v_k(L)
=d\bigl(nc_1+(N-n)c_N\bigr)
=d_0\,d\!\left(\frac{n}{\Psi_{a_1}''(L)}+\frac{N-n}{\Psi_{a_N}''(L)}\right).
\]
Note in case $d_0\neq 0$, then former equality implies the scalar compatibility condition
\begin{equation}\label{eq:exceptional-compat}
Z=d\!\left(\frac{n}{\Psi_{a_1}''(L)}+\frac{N-n}{\Psi_{a_N}''(L)}\right).
\end{equation}
A nontrivial $\mathbf{v}$ in $\ker(\mathcal{L}_{+,Z})$ therefore forces 
\eqref{eq:exceptional-compat} to be true, otherwise $d_0$ must vanish and one gets $c_1=c_N=0$ proving $\mathbf v\equiv 0$.

\smallskip 
In the following we see that \eqref{eq:exceptional-compat} contradicts the existence assumption $\omega>\omega_n^*$. From \eqref{eq:1-over-m}, we note 
\eqref{eq:exceptional-compat} is equivalent to
\begin{equation}\label{eq:alpha-compat-generalp}
-Z\sqrt\omega
=\frac{n\,t_1}{2t_1^2-1}+\frac{(N-n)\,t_N}{2t_N^2-1}.
\end{equation}
Subtracting the vertex relation $-Z\sqrt{\omega}=\alpha_n(t_1)$ in \eqref{eq:alphan} from the expression in \eqref{eq:alpha-compat-generalp} we obtain
\[
0=n\!\left(\frac{t_1}{2t_1^2-1}-\frac{1}{t_1}\right)
+(N-n)\!\left(\frac{t_N}{2t_N^2-1}-\frac{1}{t_N}\right)
=\frac{n(1-t_1^2)}{t_1(2t_1^2-1)}+\frac{(N-n)(1-t_N^2)}{t_N(2t_N^2-1)}.
\]
Using $t_1^2+t_N^2=1$ together 
with $2t_1^2-1=t_1^2-t_N^2$, the latter simplifies to
\[
0=\frac{n\,t_N^{2}}{t_1\,(t_1^2-t_N^2)}-\frac{(N-n)\,t_1^{2}}{t_N\,(t_1^2-t_N^2)}
=\frac{n\,t_N^{3}-(N-n)\,t_1^{3}}{t_1\,t_N\,(t_1^2-t_N^2)}.
\]
Since the denominator is non-zero (recall $t_1\neq t_N$ under our assumptions), 
this is equivalent to
\begin{equation}\label{eq:system-generalp}
n\,t_N^{3}=(N-n)\,t_1^{3},
\end{equation}
i.e., to $\mathfrak{H}(t_1)=0$, with $\mathfrak{H}$ as in \eqref{eq:Gdef}.

\smallskip
By Lemma~\ref{lem:vertex-vals} - (ii), $\mathfrak{H}(t_1)=0\iff\alpha_n'(t_1)=0$, 
which by subsection 4.2  forces $t_1=t_n^*$, equivalently 
$\omega=\omega_n^*$ (see \eqref{eq:alphan} and  \eqref{ExThr}).  Therefore \eqref{eq:exceptional-compat} fails in all 
regimes (both $t_1\in(0,1/\sqrt{2})$ and 
$t_1\in(1/\sqrt{2},1)$). Therefore, in the case hypothesis, it must be the case $d_0=0$ proving (1). 

\medskip
To prove item (2) we note from the case assumption that $t_1(\omega)=\tfrac{1}{\sqrt{2}}=t_N(\omega)$. Therefore the stationary solution $\Theta_{\omega,n}$ becomes continuous with all of its components equal to $\Psi_{a_*}$ where  $a_*=\operatorname{arctanh(1/\sqrt{2})}.$ Arguing as in the previous case, Sturm-Liouville theory imply that any element $\mathbf{v}=(v_1,\dots,v_N)$ in the kernel of $\mathcal{L}_{+,Z}$ is of the form \begin{equation}\label{eq:transition-ansatz}
v_j(x)=c_j\,\Psi_{a_*}'(x),\qquad c_j\in\R,\qquad j=1,\dots,N.
\end{equation} Since $\Psi_{a_*}''(L)=0$ we have that $v_j'(L)=0$ for all $j$ so that the derivative matching condition in $\dom$ is satisfied automatically with common value $d=0$. The sum condition then translates to \[
\Psi_{a_*}'(L)\sum_{j=1}^N c_j=\sum_{j=1}^N v_j(L)=Z \cdot 0= 0.
\] Since $\Psi_{a_*}'(L)\neq 0$ we then conclude it must be the case $\sum c_j=0$. Arguing as done with the set $\mathcal{K}_1$ in \eqref{eq:K1n-1} one can see that $\ker(\mathcal{L}_{+,Z})$ is $N-1$ dimensional. Which finishes the proof.
\end{proof}
\begin{remark}
It is worth mentioning that when $2n>N$ and $\omega=2N^2/Z^2$ the stationary profile $\Theta_{\omega,n}$ becomes continuous and the kernel and Morse index analysis for $\mathcal{L}_{+,Z}$ corresponds to the one in \cite[Proposition~3.24(iii)]{AngGol17b}. For sake of completeness we included a detailed proof using our machinery involving decomposition \eqref{eq:l2orth}.
\end{remark}

\section{Slope condition} \label{sec:insta}

In this section we  compute the sign of $\partial_\omega Q(\Theta_{\omega,n})$, with the profiles $\Theta_{\omega,n}$ given  in Theorem \ref{thm:existence} and the mass $Q$ defined in \eqref{mass}. 

\begin{theorem}[Slope condition]\label{thm:slope-condition}
Let $Z<0$, $N\ge 2$ and $n\in\{1,\dots,N-1\}$ be fixed. 
Assume $\omega>\omega_n^*$. 
Then:
\begin{enumerate}
\item[\textup{(i)}] If $2n\ge N$, then 
$\partial_\omega Q(\Theta_{\omega,n})>0$ for every $\omega>\omega_n^*$.
\item[\textup{(ii)}] If $2n<N$, then there exists $\omega_n^\sharp>\omega_n^*$ 
such that
\[
\begin{cases}
\partial_\omega Q(\Theta_{\omega,n})>0, & \omega>\omega_n^\sharp,\\[2pt]
\partial_\omega Q(\Theta_{\omega,n})=0, & \omega=\omega_n^\sharp,\\[2pt]
\partial_\omega Q(\Theta_{\omega,n})<0, & \omega_n^*<\omega<\omega_n^\sharp.
\end{cases}
\]
\end{enumerate}
\end{theorem}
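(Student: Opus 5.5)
The plan is to compute the mass in closed form, reduce the sign of $\partial_\omega Q$ to the sign of an explicit function of the single parameter $t=t_1(\omega)$, and use Proposition~\ref{prop:existencet} to move between $\omega$ and $t$.

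\emph{Mass as a function of $t$.} For the shifted soliton \eqref{soliton} one has
\[
\int_L^\infty \Psi_{a}^2\,dx=2\sqrt{\omega}\,\bigl(1-\tanh a\bigr).
\]
With $t=t_1(\omega)$ and $s:=t_N=\sqrt{1-t^2}$ this gives
\[
Q(\Theta_{\omega,n})=2\sqrt{\omega}\,g(t),\qquad g(t):=n(1-t)+(N-n)(1-s).
\]
The sum condition \eqref{eq:leftbranch} gives $\sqrt{\omega}=\alpha_n(t)/|Z|$, so along the branch
\[
Q=\frac{2}{|Z|}\,\alpha_n(t)\,g(t),\qquad t\in(0,t_n^*).
\]
This formula does not depend on whether $t_1$ is of type $A$ or type $B$, so the inversion at $\widehat\omega$ plays no role here. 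By \eqref{eq:t1linha}, $t_1'(\omega)<0$, and $t_1$ decreases from $t_n^*$ to $0$ as $\omega$ runs over $(\omega_n^*,\infty)$. Hence
\[
\operatorname{sign}\partial_\omega Q=-\operatorname{sign}h(t),\qquad h(t):=\alpha_n'(t)\,g(t)+\alpha_n(t)\,g'(t),
\]
where $g'(t)=-n+(N-n)\,t/s$ and $\alpha_n'$ is expressed through $\mathfrak H$ by \eqref{eq:Galpha}.

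\emph{Endpoint behaviour.}
\begin{itemize}
\item As $t\to0^+$ (that is, $\omega\to\infty$): $\alpha_n'\sim -n/t^2$ and $g\to n$, so $h\to-\infty$ and $\partial_\omega Q>0$ for large $\omega$, in every case.
\item At $t=t_n^*$: $\alpha_n'(t_n^*)=0$, so $h(t_n^*)=\alpha_n(t_n^*)\,g'(t_n^*)$. The zero of $g'$ is at $t/s=n/(N-n)$, while \eqref{eq:cubic-infimum} gives $t_n^*/s_n^*=(n/(N-n))^{1/3}$.
\begin{itemize}
\item If $2n<N$, the ratio $n/(N-n)$ is below $1$, so its cube root exceeds it. Then $g'(t_n^*)>0$, so $h>0$ near $t_n^*$ and $\partial_\omega Q<0$ just above $\omega_n^*$.
\item If $2n>N$, then $g'(t_n^*)<0$.
\item If $2n=N$, the two points coincide and $h(t_n^*)=0$. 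Since $t_n^*$ is excluded, the sign of $h$ must then come from the next-order expansion.
\end{itemize}
\end{itemize}

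\emph{Counting sign changes (the main obstacle).} One must show that $h$ has exactly one zero on $(0,t_n^*)$, and that it is a simple sign change, when $2n<N$; and that $h<0$ on all of $(0,t_n^*)$ when $2n\ge N$. Continuity and the intermediate value theorem then give $\omega_n^\sharp$ and the three-case sign pattern in (ii).
\begin{itemize}
\item First attempt: substitute $t=\sin\theta$, $s=\cos\theta$, or $u=t/s\in(0,(n/(N-n))^{1/3})$. Multiplying $h$ by $t^2s^3>0$ turns it into an explicit expression in $u$ (rational after using $s=(1+u^2)^{-1/2}$). Isolate the parameter as $h=0\iff r:=n/(N-n)=R(u)$ for an explicit $R$, and prove $R$ is strictly monotone on the relevant interval. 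This yields at most one root, and the endpoint signs decide whether it exists.
\item If monotonicity of $R$ is messy, the fallback is a different route to uniqueness: show that $h'<0$ at every zero of $h$, for instance by writing $h=(\alpha_n g)'$ and checking that $\alpha_n g$ is convex in the variable $u$.
\item For $2n=N$, a direct check with $r=1$ should give $h<0$ on $(0,\pi/4)$ in the angle variable.
\end{itemize}
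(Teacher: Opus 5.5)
\textbf{There is a genuine gap at the decisive step.} Your reduction is correct and matches the paper's. You have $Q\propto(\alpha_n g)(t_1(\omega))$, and $t_1'<0$ gives $\operatorname{sign}\partial_\omega Q=-\operatorname{sign}(\alpha_n g)'(t_1)$. Your endpoint facts are also right: $h\to-\infty$ as $t\to0^+$, and $\operatorname{sign} h(t_n^*)=\operatorname{sign}(N-2n)$ via $t_n^*/s_n^*=(n/(N-n))^{1/3}$.

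The step you call the main obstacle, however, is only announced, and it is the whole content of the theorem. The endpoint signs do not determine the sign pattern on the branch. In case (i), knowing $h(0^+)<0$ and $h(t_n^*)\le 0$ does not rule out two zeros in $(0,t_n^*)$. In case (ii), it does not rule out three or more.

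Neither candidate route is carried out, and the fallback is stated with the wrong sign. Since $h(0^+)<0<h(t_n^*)$ when $2n<N$, the forced crossing is upward. So what you would need is $h'>0$ at every zero, which is what convexity of $\alpha_n g$ in $u$ would give, not $h'<0$. Likewise, for $2n=N$ a next-order expansion at $t_n^*$ only gives information near $t_n^*$. It does not give negativity on the whole branch $(0,t_n^*)$.

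\textbf{The missing idea, which is how the paper closes the argument.} Clear denominators as you suggested, but stay in the variable $t$ and work on all of $(0,1)$ rather than just on the branch. Using $s^4-t^4=1-2t^2$, all cross terms cancel and
\[
t^2s^3(\alpha_n g)'(t)=\mathcal N_n(t):=N\bigl((N-n)t^3-ns^3\bigr)-n(N-n)(2t^2-1).
\]
This function has the following properties:
\begin{itemize}
\item $\mathcal N_n(0^+)=-n^2<0$ and $\mathcal N_n(1^-)=(N-n)^2>0$.
\item $\mathcal N_n'(t)=t\,\phi(t)$ with $\phi(t)=3N[(N-n)t+ns]-4n(N-n)$.
\item $\phi$ is strictly concave, because $s''=-s^{-3}<0$.
\end{itemize}
Hence $\{\phi>0\}$ is an interval, and $\mathcal N_n$ is decreasing--increasing--decreasing. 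On an initial decreasing phase it stays below $-n^2$, and on a final decreasing phase it stays above $(N-n)^2$. So it has exactly one zero $t_n^\sharp\in(0,1)$, where it changes sign from negative to positive.

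Your computation $\operatorname{sign}\mathcal N_n(t_n^*)=\operatorname{sign}(N-2n)$ then locates this zero:
\begin{itemize}
\item if $2n<N$, then $t_n^\sharp\in(0,t_n^*)$;
\item if $2n=N$, then $t_n^\sharp=t_n^*$;
\item if $2n>N$, then $t_n^\sharp>t_n^*$.
\end{itemize}
This yields (i) and (ii) simultaneously, with $\omega_n^\sharp:=\alpha_n(t_n^\sharp)^2/Z^2>\omega_n^*$.
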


By convenience of the reader, we record a fast-reference summary in Table~\ref{table:3} of the Appendix D.

\begin{proof}
Write $s=s(t):=\sqrt{1-t^2}$. In particular $s(t_1)=t_N$. By 
Proposition~\ref{prop:existencet}, $\omega\mapsto t_1(\omega)$ is a smooth 
strictly decreasing bijection from $(\omega_n^*,\infty)$ onto the existence 
branch $(0,t_n^*)$.

\medskip
Since the half-line soliton is $\mathfrak S_\omega(y)=\sqrt{2\omega}\,\operatorname{sech}(\sqrt\omega\,y)$, 
the substitution $u=\sqrt\omega(x-L)+a_j$ gives
\[
\int_L^\infty\!\Psi_{a_j}^2\,dx
=2\sqrt\omega\int_{a_j}^\infty\!\operatorname{sech}^2(u)\,du
=2\sqrt\omega\,(1-t_j),
\qquad t_j=\tanh(a_j),
\]
so that
\begin{equation}\label{eq:Q-explicit-cubic}
Q(\Theta_{\omega,n})=\frac12\sum_{j=1}^N\!\int_L^\infty\!\Psi_{a_j}^2\,dx
=\sqrt\omega\;h(t_1),
\qquad
h(t):=n(1-t)+(N-n)\bigl(1-s(t)\bigr),
\end{equation}
with $h'(t)=-n+(N-n)t/s$.

\medskip
By Proposition~\ref{prop:existencet}, 
$\frac{d}{d\omega} t_1(\omega)=-Z/\bigl(2\sqrt\omega\,\alpha_n'(t_1)\bigr)$, where the derivative of $\alpha_{n}$ is with respect to $t$. Differentiating 
\eqref{eq:Q-explicit-cubic} and using $-Z\sqrt\omega=\alpha_n(t_1(\omega))$ we get for $t_1=t_1(\omega)$
\begin{equation}
    \label{eq:der-Qw} \partial_\omega Q(\Theta_{\omega,n})
=\frac{h(t_1)}{2\sqrt\omega}-\frac{Z\,h'(t_1)}{2\,\alpha_n'(t_1)}
=\frac{(h\,\alpha_n)'(t_1)}{2\sqrt\omega\,\alpha_n'(t_1)}.
\end{equation}

A direct computation, in which the identity 
$s^2+t^2=1$ cancels every cross term, gives the closed form
\begin{equation}\label{eq:Nn-cubic-closed}
(h\,\alpha_n)'(t)=\frac{\mathcal N_n(t)}{t^2\,s^3},
\qquad
\mathcal N_n(t):=N\bigl((N-n)\,t^3-n\,s^3\bigr)-n(N-n)\,(2t^2-1).
\end{equation}
Since $t_1^2 s^3(t_1)>0$ and 
$\alpha_n'(t_1)<0$ on the existence branch,
\begin{equation}\label{eq:sign-slope-cubic}
\operatorname{sign}\bigl(\partial_\omega Q(\Theta_{\omega,n})\bigr)
=-\operatorname{sign}\bigl(\mathcal N_n(t_1(\omega))\bigr).
\end{equation}

\medskip
From \eqref{eq:Nn-cubic-closed} we have the end-point behavior 
\[
\mathcal N_n(0^+)=-Nn+n(N-n)=-n^2<0,
\qquad
\mathcal N_n(1^-)=N(N-n)-n(N-n)=(N-n)^2>0.
\]
At the minimizer $t_n^*$ one has $\alpha_n'(t_n^*)=0$, hence 
$(h\,\alpha_n)'(t_n^*)=h'(t_n^*)\,\alpha_n(t_n^*)$ and thus 
$\operatorname{sign}\left(\mathcal N_n(t_n^*)\right)$ agrees with $\operatorname{sign} \left( h'(t_n^*)\right)$. By 
\eqref{eq:cubic-infimum} we have  $t_n^*/s(t_n^*)=(n/(N-n))^{1/3}$. Therefore
\[
h'(t_n^*)=-n+(N-n)\,\Big(\frac{n}{N-n}\Big)^{1/3}=n^{1/3}\bigl[(N-n)^{2/3}-n^{2/3}\bigr],
\] from where $\operatorname{sign}\left(\mathcal N_n(t_n^*)\right)=\operatorname{sign}(N-2n)$.

\medskip
Differentiating \eqref{eq:Nn-cubic-closed},
\[
\mathcal N_n'(t)=t\,\phi(t),
\qquad
\phi(t):=3N\,[(N-n)t+ns]-4n(N-n).
\]
Since $s''=-s^{-3}<0$, the function $s$ is strictly concave down on $(0,1)$, and 
therefore so is $\phi$. A strictly concave down function has $\{t\,|\,\phi(t)>0\}$ equal to 
an open subinterval of $(0,1)$. Since the sign of  $\mathcal N_n'$ and $\phi$ agree 
on $(0,1)$, the function $\mathcal N_n$ is of decreasing–increasing–decreasing 
type, with each phase possibly absent. the latter combined with $\mathcal N_n(0^+)=-n^2<0$ 
and $\mathcal N_n(1^-)=(N-n)^2>0$, on any initial decreasing phase $\mathcal N_n$ 
stays negative, and on any final decreasing phase it stays above 
$\mathcal N_n(1^-)>0$; hence the single sign change occurs within the middle 
increasing phase. Thus $\mathcal N_n$ has exactly one zero 
$t_n^\sharp\in(0,1)$, with $\mathcal N_n<0$ on $(0,t_n^\sharp)$ and 
$\mathcal N_n>0$ on $(t_n^\sharp,1)$. 

\medskip
\emph{Case \textup{(i)}, $2n\ge N$.}\ In the balanced split case $2n=N$ (obviously with $N$ even), $\mathcal N_n(t_n^*)=0$, so 
$t_n^\sharp=t_n^*$. Thus $\mathcal N_n<0$ on the whole existence 
branch $(0,t_n^*)$.  Similarly, if $2n>N$, $\mathcal N_n(t_n^*)<0$ 
while $\mathcal N_n(1^-)>0$, so the zero satisfies 
$t_n^\sharp\in(t_n^*,1)$, outside the decreasing existence branch. Therefore 
$\mathcal N_n<0$ on all of $(0,t_n^*)$. In both cases \eqref{eq:sign-slope-cubic} gives 
$\partial_\omega Q(\Theta_{\omega,n})>0$ for every $\omega>\omega_n^*$. 

\emph{Case \textup{(ii)}, $2n<N$.}\ Here $\mathcal N_n(t_n^*)>0$ while 
$\mathcal N_n(0^+)<0$, so the zero satisfies 
$t_n^\sharp\in(0,t_n^*)$, with $\mathcal N_n<0$ on $(0,t_n^\sharp)$ and 
$\mathcal N_n>0$ on $(t_n^\sharp,t_n^*)$. Setting
\begin{equation}\label{velha}
\omega_n^\sharp:=\frac{\alpha^2_n(t_n^\sharp)}{Z^2},
\end{equation}
the strict monotonicity of $\alpha_n$ on $(0,t_n^*)$ gives 
$\alpha_n(t_n^\sharp)>\alpha_n(t_n^*)$, hence $\omega_n^\sharp>\omega_n^*$. 
Since both $$\omega<\omega_n^\sharp\iff t_1(\omega)> t_n^\sharp \ \mbox{ and }\ \omega>\omega_n^\sharp\iff t_1(\omega)< t_n^\sharp,$$ the 
sign rule \eqref{eq:sign-slope-cubic} yields the stated trichotomy, with 
$\partial_\omega Q=0$ exactly at $\omega=\omega_n^\sharp$.
\end{proof}

\begin{remark} 
Consider again the case $N=4$ and, for simplicity, set $Z=-1$. In this case $\omega_1^*=\omega_3^*\approx 29.2205$, $\omega_2^*=\widehat{\omega}=32$ and $\omega_1^\sharp\approx 31.3094$.

\begin{enumerate}
\item In Figure~\ref{fig:nnp} we plot the behavior of $\mathcal{N}_n(t)$ in $(0,1)$ for all the admissible values of $n$. We highlight the fact the root of $\mathcal{N}_n$ is placed exactly at $1/\sqrt{2}$ exactly in the balanced case $n=2$. 

\item  Fix $\omega\in(\omega_1^*,\omega_1^\sharp)$. We note that even when the configuration (types of profiles) of $\Theta_{\omega,1}$ and $\Theta_{\omega,3}$ are the same, we have different charges behavior because $t_1(\omega)$ depends on $n$. See Figure~\ref{fig:13}.
\end{enumerate}

\begin{figure}
    \centering
    \resizebox{\linewidth}{!}{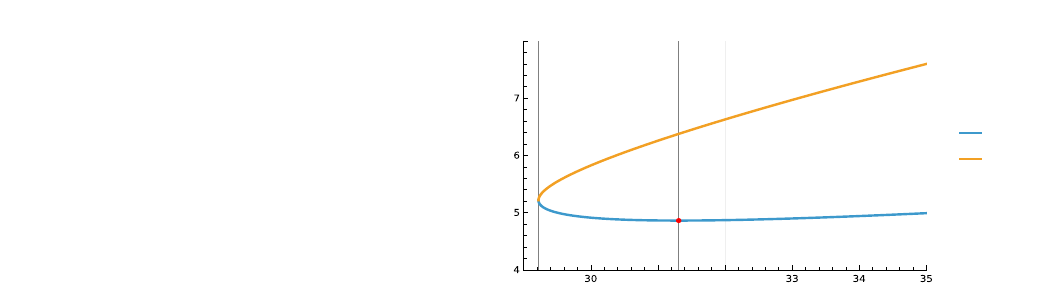}
    \caption{For $N=4$ and $Z=-1$ we plot the charge $Q$ (on the right) and $t_1$ (on the left) in terms of $\omega$ for $n=1$ and $n=3$.}
    \label{fig:13}
\end{figure}

\begin{figure}
    \centering
    \resizebox{0.55\linewidth}{!}{
\begingroup%
  \makeatletter%
  \providecommand\color[2][]{%
    \errmessage{(Inkscape) Color is used for the text in Inkscape, but the package 'color.sty' is not loaded}%
    \renewcommand\color[2][]{}%
  }%
  \providecommand\transparent[1]{%
    \errmessage{(Inkscape) Transparency is used (non-zero) for the text in Inkscape, but the package 'transparent.sty' is not loaded}%
    \renewcommand\transparent[1]{}%
  }%
  \providecommand\rotatebox[2]{#2}%
  \newcommand*\fsize{\dimexpr\f@size pt\relax}%
  \newcommand*\lineheight[1]{\fontsize{\fsize}{#1\fsize}\selectfont}%
  \ifx\svgwidth\undefined%
    \setlength{\unitlength}{314.87226105bp}%
    \ifx\svgscale\undefined%
      \relax%
    \else%
      \setlength{\unitlength}{\unitlength * \real{\svgscale}}%
    \fi%
  \else%
    \setlength{\unitlength}{\svgwidth}%
  \fi%
  \global\let\svgwidth\undefined%
  \global\let\svgscale\undefined%
  \makeatother%
  \begin{picture}(1,0.54123794)%
    \lineheight{1}%
    \setlength\tabcolsep{0pt}%
    \put(0,0){\includegraphics[width=\unitlength,page=1]{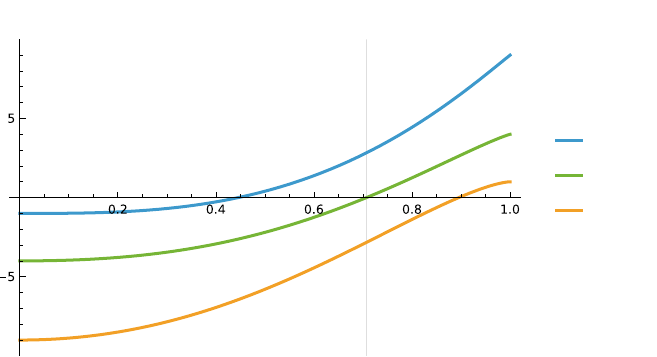}}%
    \put(0.90137573,0.32040147){\color[rgb]{0,0,0}\makebox(0,0)[lt]{\lineheight{1.25}\smash{\begin{tabular}[t]{l}$n=1$\end{tabular}}}}%
    \put(0.90325899,0.26500179){\color[rgb]{0,0,0}\makebox(0,0)[lt]{\lineheight{1.25}\smash{\begin{tabular}[t]{l}$n=2$\end{tabular}}}}%
    \put(0.90564091,0.21230914){\color[rgb]{0,0,0}\makebox(0,0)[lt]{\lineheight{1.25}\smash{\begin{tabular}[t]{l}$n=3$\end{tabular}}}}%
    \put(0.80067673,0.23228987){\color[rgb]{0,0,0}\makebox(0,0)[lt]{\lineheight{1.25}\smash{\begin{tabular}[t]{l}$t$\end{tabular}}}}%
    \put(0.02510092,0.49759295){\color[rgb]{0,0,0}\makebox(0,0)[lt]{\lineheight{1.25}\smash{\begin{tabular}[t]{l}$\mathcal{N}_n$\end{tabular}}}}%
    \put(0.44532553,0.51889466){\color[rgb]{0,0,0}\makebox(0,0)[lt]{\lineheight{1.25}\smash{\begin{tabular}[t]{l}Plot of $\mathcal{N}_n(t)$\end{tabular}}}}%
    \put(0.56732623,0.03573316){\color[rgb]{0,0,0}\makebox(0,0)[lt]{\lineheight{1.25}\smash{\begin{tabular}[t]{l}$t=\tfrac{1}{\sqrt{2}}$\end{tabular}}}}%
  \end{picture}%
\endgroup%
}
    \caption{Plot of the function $\mathcal{N}_n$ defined in \eqref{eq:Nn-cubic-closed} over $(0,1)$.}
    \label{fig:nnp}
\end{figure}

\end{remark}

\section{Proof of Theorem~\ref{thm:insta}} \label{sec:mainproof}

In the following we show the  (in)stability  behavior of the standing waves solutions establishes in Theorem \ref{thm:existence} by the flow of the cubic-NLS based in part on Theorem \ref{2main} in Appendix C.

\begin{proof}
From Theorem~\ref{global} we know that \eqref{NLS} is globally well-posed in $H^1(\dg)$ for any $N\ge 1$ and $p=3$ with data-solution map at least of class $C^2$. Moreover, from Theorem~\ref{thm:existence} we have the existence of a smooth curve of bound states \(\omega \mapsto e^{it\omega}\Theta_{\omega,n}\) in the energy space $H^1(\dg)$. Let us also note that such properties are inherited by the subspace $H^1(\dg)\cap \xm$ because of the uniqueness of the solutions to the Cauchy problem in $H^1(\dg)$ (Theorem \ref{global}) and the fact the group $e^{it\mathcal{H}_Z^{\delta'}}$ preserves the orthogonal decomposition $L^2(\dg)=\xa\oplus\xb\oplus\xm$ (see Lemma~\ref{lem:group}. See also a similar study in \cite{AC, AP3}).

\medskip
Using Theorem~\ref{2main} (and Remark~\ref{lintononlin}), and taking into account Propositions~\ref{prop:l2}, \ref{prop:ker-L1Z} and Theorem~\ref{thm:morse1}; we have the following summary. 
\begin{itemize}
    \item For the cases in (1) it follows $n(\mathcal{H})=1=\rho(\omega)$ and therefore Theorem~\ref{2main} yields orbital stability in $H^1(\dg)$. 
    \item For the cases in (2) it follows $n\big(\mathcal{H}\bigr|_{\xm\cap\dom}\big)=1=\rho(\omega)$ and therefore Theorem~\ref{2main} yields orbital stability in $\xm\cap H^1(\dg)$.    
    \item For case (3)(c) we note $n\big(\mathcal{H}\bigr|_{\xm\cap\dom}\big)=1$ while $\rho(\omega)=0$. Therefore $n\big(\mathcal{H}\bigr|_{\xm\cap\dom}\big)-\rho(\omega)$ is odd and Theorem~\ref{2main} combined with Remark~\ref{lintononlin} yields orbital instability in $\xm\cap H^1(\dg)$. Instability in a subspace induces instability in the whole $H^1(\dg)$.
    \item In case (3)(e) it follows $n\big(\mathcal{H}\bigr|_{\xm\cap\dom}\big)=2$ and $\rho(\omega)=1$. From Theorem~\ref{2main} and Remark~\ref{lintononlin} it follows orbital instability in $\xm\cap H^1(\dg)$. Finally, instability in a subspace induces instability in the whole $H^1(\dg)$.
    \end{itemize}
It remains to address cases (3)(a), (3)(b), and (3)(d). Propositions~\ref{prop:l2}, \ref{prop:ker-L1Z} and Theorem~\ref{thm:morse1} gives $n(\mathcal{H})=n$, while Theorem~\ref{thm:slope-condition} yields $\rho(\omega)=1$. Consequently, for  $n$ even, the difference $n(\mathcal{H})-\rho(\omega)$ is odd, so Theorem~\ref{2main} together with Remark~\ref{lintononlin} establishes orbital instability in $H^1(\dg)$. The parity restriction can nevertheless be removed by adapting the argument of \cite[Theorem~1.2]{Gril1988}: in view of \cite[Section~5.1]{G} and Remark~\ref{lintononlin}, it is enough to exhibit some $\lambda>0$ for which
\begin{equation}
    \label{eq:spectral-insta}
    \begin{pmatrix}
        0 & \mathcal{L}_{-,Z}\\
        -\mathcal{L}_{+,Z}&0
    \end{pmatrix} \begin{pmatrix}
        u_1\\ u_2
    \end{pmatrix}=\lambda \begin{pmatrix}
        u_1\\u_2
    \end{pmatrix}, \qquad u_1, u_2\in \dom.
\end{equation}
Combining \cite[Theorem~1.2]{Gril1988} with the spectral results of Section~\ref{sec:spectral}, the number of positive eigenvalues $\lambda$ satisfying \eqref{eq:spectral-insta} is at least
\[
n(P\mathcal{L}_{+,Z})-n\!\left(P(\mathcal{L}_{-,Z})^{-1}\right)=n-1-0=n-1,
\]
where $P$ denotes the orthogonal projection onto the codimension-one subspace $[\operatorname{span}\{\Theta_{\omega,n}\}]^\perp$. Since $n\ge 2$ in each of the cases the desired instability follows.

\end{proof}

\section{The general case \texorpdfstring{$p>1$, $p\neq 3$}{p>1}}\label{sec:generalp}

Throughout this section the half--line profiles are the positive decaying
solutions of
\[
-\Psi''+\omega \Psi-\Psi^{p}=0, \qquad x\ge L,
\]
namely the shifted one--dimensional solitons
\[
\mathfrak{S}_\omega(y)=\Big(\tfrac{(p+1)\omega}{2}\Big)^{1/(p-1)}
\operatorname{sech}^{2/(p-1)}\!\Big(\tfrac{p-1}{2}\sqrt{\omega}\,y\Big),
\]
so that
\begin{equation}\label{solitonpp}
\Psi_{a_j}(x) = \mathfrak{S}_\omega\!\Big(x-L+\tfrac{2}{\sqrt{\omega}(p-1)}\,a_j\Big),
\qquad x \geq L, \qquad 1\le j\le N .
\end{equation}
The construction of discontinuous-at-the-vertex stationary solutions carried out in Section~\ref{sec:exists} for the cubic NLS ($p=3$) extends to a general focusing power nonlinearity $p>1$, subject to certain restrictions; several objects that were explicit in the cubic case are now only implicit, which introduces additional obstacles. We develop in turn the branch system and its existence (Section~\ref{sec:generalp-existence}), the spectral analysis (Section~\ref{sec:spectral-generalp}, which remains unchanged for any $p>1$), and the slope condition (Section~\ref{sec:slope-generalp}). These ingredients are combined in Section~\ref{sec:insta-generalp} to yield a \emph{high-frequency} (in)stability statement valid for all $p>1$ (stability for $1<p\le 5$ and instability for $p>5$), which complements and extends previous results in the literature on delta-prime type interactions on star graphs (see \cite{ABG, AngGol17a, AngGol17b, G}). Remark~\ref{rem:generalp-remarks} discusses the low-frequency obstructions and states a global conjecture.

\subsection{Existence of standing waves}
\label{sec:generalp-existence}

We collect the points at which the general analysis departs from the cubic one;
all qualitative conclusions of Section~\ref{sec:exists} persist, but several
explicit objects available for $p=3$ must be replaced by their implicit
counterparts.

\subsubsection{Derivative matching.}
Setting $t_j:=\tanh(a_j)$, the derivative matching at the vertex reads
\[
g_p(t_j):=t_j(1-t_j^2)^{1/(p-1)}=\mu, \qquad j=1,\dots,N,
\]
and $g_p$ attains its unique maximum on $(0,1)$ at
\[
t_p^*=\sqrt{\tfrac{p-1}{p+1}},
\qquad a_*^{(p)}:=\operatorname{arctanh}(t_p^*),
\]
the analogue of the cubic value $1/\sqrt 2$. As before, for each admissible
$\mu$ there are exactly two shifts $a_A(\mu)<a_*^{(p)}<a_B(\mu)$, separating
type--$A$ ($t<t_p^*$) from type--$B$ ($t>t_p^*$) profiles.

The essential simplification of the cubic case is the explicit involution
$t_N=\sqrt{1-t_1^2}$ coming from the identity $t_A^2+t_B^2=1$. For general $p$
this relation becomes implicit: raising $g_p(t_A)=g_p(t_B)$ to the power $p-1$
gives the level--set equation
\[
h(t):=t^{p-1}(1-t^2),\qquad h(t_A)=h(t_B),
\]
which defines a smooth involution $\xi_p$ on $(0,1)\setminus\{t_p^*\}$ with
$\xi_p(t)\in[t_p^*,1)$ for $t\in(0,t_p^*]$ and $\xi_p(t)\in(0,t_p^*]$ for
$t\in[t_p^*,1)$. For $p=3$ this factors explicitly, $\xi_3(t)=\sqrt{1-t^2}$.

\subsubsection{The sum condition, a decreasing branch system and a existence threshold}
With $t_1=t$ and $t_N=\xi_p(t)$, the sum condition takes the form
\begin{equation}\label{eq:alphanp}
\alpha_{n,p}(t):=\frac{n}{t}+\frac{N-n}{\xi_p(t)}=\alpha(\omega)=-Z\sqrt{\omega},
\qquad t\in(0,1).
\end{equation}
For $p=3$ the function $\alpha_{n,3}$ has a closed--form minimizer; for general
$p$ no closed form for the infimum of $\alpha_{n,p}$ is available, and when $p$
is not an integer a unified proof of uniqueness of the minimizer on $(0,1)$
seems out of reach. Nevertheless the asymptotics $\alpha_{n,p}(t)\to\infty$ as
$t\to0^+$ and $t\to1^-$, together with the sign of
$\alpha_{n,p}'(t_p^*)=(N-2n)/(t_p^*)^2$, guarantee at least one interior
minimizer $t_{n,p}^*$ and a well--defined decreasing branch system
\begin{equation}\label{branch}
\{(0,t_{n,p}^*),\ \omega_{n,p}^*,\ t_1(\omega)\},
\end{equation}
 with existence threshold
\begin{equation}\label{eq:wnp*}
\omega_{n,p}^*=\frac{\big(\alpha_{n,p}(t_{n,p}^*)\big)^2}{Z^2}.
\end{equation}

\begin{remark}\label{p-thres}The threshold $\omega_{n,p}^*$ in \eqref{eq:wnp*} is optimal whenever the minimizer $t_{n,p}^*$ is unique on
$(0,1)$; numerical evidence suggests uniqueness for all $p>1$ (see
Remark~\ref{rem:generalp-remarks}), but no analytic unified result is available. The situation is markedly different for $p\in\{2,3,4\}$: in these cases the level-set equation $h(t)=h(\xi_p(t))$ reduces to a polynomial identity that can be explicitly factored, thereby yielding a closed-form expression for $\xi_p$. This explicit representation is precisely what makes the analytic proof of uniqueness feasible for $p\in\{2,3,4\}$, whereas for other values of $p$ no such factorization is available. 
\end{remark}
\subsubsection{The secondary threshold.}
The $n$--independent secondary threshold
\[
\widehat\omega=\frac{p+1}{p-1}\,\frac{N^2}{Z^2}
\]
replaces the cubic value $2N^2/Z^2$. It arises by evaluating $\alpha_{n,p}$ at
the symmetric point $t=t_p^*$, where $\alpha_{n,p}(t_p^*)=N/t_p^*$ independently
of $n$, and governs the inversion of tail type exactly as in the cubic case.

\medskip
The above construction yields the following existence statement, the analogue of
the cubic Theorem~\ref{thm:existence}.

\begin{theorem}\label{thm:existence-p}
Let $Z<0$, $N\ge 2$, $p>1$ and fix $n\in\{1,\dots,N-1\}$, and let $\omega_{n,p}^*$
be given by \eqref{eq:wnp*}. Then for every $\omega>\omega_{n,p}^*$ there exist
two shifts $0<a_A(\omega)<a_*^{(p)}<a_B(\omega)$ such that the stationary profile
$\Theta_{\omega,n}=(\Psi_{a_1},\dots,\Psi_{a_N})$, carrying $a_1$ on the first
$n$ edges and $a_N$ on the remaining $N-n$, belongs to $\dom$. Moreover
\[
(\omega_{n,p}^*,\infty)\ni \omega \longmapsto e^{it\omega}\Theta_{\omega,n}\in
D(\mathcal{H}_{Z}^{\delta'})
\]
is a smooth curve of standing waves for \eqref{NLS} on $\dg$, discontinuous at
the vertex.
\end{theorem}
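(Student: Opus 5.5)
The plan is to repeat the cubic construction of Section~\ref{sec:exists}, with the explicit involution $\sqrt{1-t^2}$ replaced by the implicit involution $\xi_p$. The argument has four steps.

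\textbf{Step 1 (derivative matching).} Each positive decaying solution of $-\Psi''+\omega\Psi-\Psi^p=0$ on $[L,\infty)$ is of the form \eqref{solitonpp}. A direct differentiation gives
\[
\Psi_{a_j}(L)=c_{p,\omega}\operatorname{sech}^{2/(p-1)}(a_j),\qquad
\Psi_{a_j}'(L)=-\sqrt{\omega}\,\Psi_{a_j}(L)\,t_j,\qquad t_j=\tanh a_j .
\]
The sum condition with $Z<0$ forces $\Psi_{a_j}'(L)<0$, hence $a_j>0$. Equality of the derivatives is then equivalent to $g_p(t_j)=\mu$ for a common $\mu$, with $g_p(t)=t(1-t^2)^{1/(p-1)}$.

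The function $g_p$ vanishes at $0$ and $1$, and its only critical point is $t_p^*$. So for $\mu\in(0,g_p(t_p^*))$ there are exactly two preimages $t_A<t_p^*<t_B$. The map $t\mapsto t_B$ defined by $h(t_A)=h(t_B)$ is the involution $\xi_p$. It is smooth on $(0,1)\setminus\{t_p^*\}$ by the implicit function theorem, since $h'(\xi_p(t))\neq0$ there.

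\textbf{Step 2 (reduction to $\alpha_{n,p}$).} Using $\operatorname{sech}^{2/(p-1)}(a_j)=\mu/t_j$, the sum condition reduces, exactly as in \eqref{eq:kn}, to
\[
\frac{n}{t_1}+\frac{N-n}{t_N}=-Z\sqrt\omega ,
\]
that is, to \eqref{eq:alphanp}. Any $t\in(0,1)\setminus\{t_p^*\}$ solving it gives $a_1=\operatorname{arctanh}t$ and $a_N=\operatorname{arctanh}\xi_p(t)$. These two shifts lie on opposite sides of $a_*^{(p)}$, which yields the two shifts $a_A<a_*^{(p)}<a_B$ claimed in the statement, and the corresponding profile lies in $\dom$.

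\textbf{Step 3 (existence of a solution).} We have $\alpha_{n,p}\to\infty$ as $t\to0^+$ and as $t\to1^-$, the latter because $\xi_p(t)\to0$. Hence $\alpha_{n,p}$ attains its infimum at some interior $t_{n,p}^*$. We fix $t_{n,p}^*$ to be the smallest minimizer, so that $\omega^*_{n,p}$ in \eqref{eq:wnp*} is the value of $-Z\sqrt\omega$ at the minimum level.

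For $\omega>\omega^*_{n,p}$ we have $-Z\sqrt\omega>\alpha_{n,p}(t_{n,p}^*)$. By the intermediate value theorem on $(0,t_{n,p}^*)$ there is a $t$ with $\alpha_{n,p}(t)=-Z\sqrt\omega$. We choose $t_1(\omega)$ to be the largest such root below $t_{n,p}^*$.

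Two points need care here. First, $\alpha_{n,p}$ is continuous across $t_p^*$, where $\xi_p(t_p^*)=t_p^*$. Second, the root must avoid $t_p^*$ itself, where the two shifts collapse. That happens only when $-Z\sqrt\omega=N/t_p^*$, i.e.\ at $\omega=\widehat\omega$; there one uses the other branch as in Remark~\ref{rem:w=2N^2/Z^2}, or excludes this single frequency.

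\textbf{Step 4 (smoothness: the main obstacle).} This is where I expect the real difficulty. It requires $\alpha_{n,p}'(t_1(\omega))\neq0$ in order to apply the implicit function theorem to $F(t,\omega)=\alpha_{n,p}(t)+Z\sqrt\omega$, as in Proposition~\ref{prop:existencet}. For $p=3$ this followed from the uniqueness of the critical point of $\alpha_{n,3}$.

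My first attempt would be to show that $\alpha_{n,p}$ is strictly decreasing on $(0,t_{n,p}^*)$, by computing
\[
\alpha_{n,p}'(t)=-\frac{n}{t^2}-\frac{(N-n)\,\xi_p'(t)}{\xi_p(t)^2},\qquad \xi_p'=\frac{h'(t)}{h'(\xi_p)},
\]
and proving that $\alpha_{n,p}'$ has a single sign change, for instance through a convexity property of $\alpha_{n,p}$ in a suitable variable such as $\log t$.

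If this fails for non-integer $p$, the fallback is the following. Critical values of a smooth function have measure zero (Sard), and the set of values $-Z\sqrt\omega$ hit at degenerate roots is discrete. Away from such values, each nondegenerate root $t_1(\omega)$ continues smoothly with $t_1'(\omega)=-Z/(2\sqrt\omega\,\alpha_{n,p}'(t_1))$. To keep the statement as written one then needs $\alpha_{n,p}'<0$ on the whole branch $(0,t_{n,p}^*)$. I would check this directly near $0$, where $\alpha_{n,p}'\sim-n/t^2$, and on the rest of the branch via a continuation argument together with the sign $\alpha_{n,p}'(t_p^*)=(N-2n)/(t_p^*)^2$.
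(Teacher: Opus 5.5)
Your Steps 1--2 reproduce the paper's construction: derivative matching through $g_p$, the implicit involution $\xi_p$, and the reduction of the vertex sum condition to $\alpha_{n,p}(t)=-Z\sqrt\omega$. Steps 3--4, however, contain a genuine gap.

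By fixing $t_{n,p}^*$ as the smallest \emph{global} minimizer of $\alpha_{n,p}$, you make the smooth-curve claim depend on $\alpha_{n,p}'<0$ on all of $(0,t_{n,p}^*)$. That is a property of the same kind as the uniqueness of the minimizer, which the paper says is not available for general $p$ (Remarks~\ref{p-thres} and \ref{rem:generalp-remarks}). Neither of your workarounds closes the gap:
\begin{itemize}
\item The sign $\alpha_{n,p}'(t_p^*)=(N-2n)/(t_p^*)^2$ is information at a single point and rules out no critical point inside $(0,t_{n,p}^*)$.
\item Sard's theorem only gives local smooth pieces away from a discrete set of frequencies, not one smooth curve on $(\omega_{n,p}^*,\infty)$.
\end{itemize}
Worse, your rule ``largest root below $t_{n,p}^*$'' can make $\omega\mapsto t_1(\omega)$ discontinuous. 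Suppose $\alpha_{n,p}$ had a local minimum at $s_1$ and a local maximum at $s_2$ with $s_1<s_2<t_{n,p}^*$. As the level $-Z\sqrt\omega$ decreases through $\alpha_{n,p}(s_2)$, the largest root jumps from $(0,s_1)$ to a neighbourhood of $s_2$.

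The missing idea is to give up optimality of the threshold instead of trying to prove monotonicity. Let $t_{n,p}^*$ be the \emph{first} zero of $\alpha_{n,p}'$ in $(0,1)$.
\begin{itemize}
\item It exists because $\alpha_{n,p}'(t)\sim -n/t^2$ as $t\to 0^+$, while $\alpha_{n,p}\to\infty$ as $t\to 1^-$ forces $\alpha_{n,p}'>0$ somewhere.
\item Then $\alpha_{n,p}'<0$ on $(0,t_{n,p}^*)$ by construction, so $\alpha_{n,p}$ is a smooth decreasing diffeomorphism of $(0,t_{n,p}^*)$ onto $(\alpha_{n,p}(t_{n,p}^*),\infty)$.
\item The implicit-function argument of Proposition~\ref{prop:existencet} then applies verbatim.
\end{itemize}
This is, in effect, the paper's decreasing branch system \eqref{branch}, whose threshold $\omega_{n,p}^*$ is explicitly allowed to be non-optimal (item (4) of Remark~\ref{rem:generalp-remarks}).

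Two smaller points.
\begin{itemize}
\item When $2n>N$ the branch may cross $t_p^*$, so you need $\xi_p$, and hence $\alpha_{n,p}$, to be smooth at the fixed point $t_p^*$, not merely continuous. This is also needed to define the first zero of $\alpha_{n,p}'$ by continuity. It follows from $h''(t_p^*)\neq 0$ via the one-dimensional Morse lemma, which gives $\xi_p'(t_p^*)=-1$; that is exactly what the formula for $\alpha_{n,p}'(t_p^*)$ uses.
\item You are right that the two shifts collapse at $\omega=\widehat\omega$ when the branch passes through $t_p^*$ (compare item 2) of Theorem~\ref{thm:existence}). That frequency must be excluded from the strict inequalities $a_A<a_*^{(p)}<a_B$ and from the discontinuity claim. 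Switching to the other branch there, as you suggest, would break the smoothness of the curve, so exclusion is the right fix.
\end{itemize}
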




\subsection{Spectral analysis}
\label{sec:spectral-generalp}

The spectral analysis of Section~\ref{sec:spectral} is, to a large extent,
insensitive to $p$. The Hessian splitting \eqref{S2}, the diagonal operators
$\mathcal L_{\pm,Z}$ of \eqref{L+} with
\begin{equation}
\mathcal{L}_{+,j}:=-\partial_x^2+\omega-p\,\Psi_{a_j}^{\,p-1},
\qquad
\mathcal{L}_{-,j}:=-\partial_x^2+\omega-\Psi_{a_j}^{\,p-1},
\qquad j\in\{1,N\},\ p>1,
\end{equation}
and the index decomposition
$\dom=(\dom\cap X_{1,\perp})\oplus(\dom\cap X_{N,\perp})\oplus(\dom\cap\xm)$ use
only that each cluster profile solves the stationary equation on a half--line,
that it is positive and exponentially decaying, and meets the $\delta'$ vertex
conditions. Consequently every statement of Section~\ref{sec:spectral} has a
$p>1$ counterpart; we highlight the modifications.

\subsubsection{The vertex--value lemma.}
Lemma~\ref{lem:vertex-vals} carries over with the $p$--dependent boundary
identities
\[
\Psi_{a_j}'(L)=-\sqrt{\omega}\,\Psi_{a_j}(L)\,t_j,
\qquad
\Psi_{a_j}''(L)=\frac{\omega\,\Psi_{a_j}(L)}{2}\bigl[(p+1)t_j^2-(p-1)\bigr],
\qquad j\in\{1,N\},
\]
so $\Psi_{a_j}''(L)=0$ iff $t_j=t_p^*$. The cubic involution $t_1^2+t_N^2=1$ is
replaced by 
$$
\xi^{p-1}_p(t)[1-\xi^2_p(t)]=t^{\,p-1}(1-t^2), 
$$
$t_N=\xi_p(t_1)$,
and the auxiliary function of Lemma~\ref{lem:vertex-vals}\,(ii) becomes
\[
\mathfrak H(t):=n\,\xi^p_p(t){\,p}\bigl[(p+1)\xi^2_p(t)-(p-1)\bigr]
-(N-n)\,t^{\,p}\bigl[(p-1)-(p+1)t^2\bigr],
\]
with implicit differentiation giving the structural identity
\begin{equation}\label{eq:5.1iip}
\alpha_{n,p}'(t)
=-\frac{\mathfrak H(t)}{t^2\,\xi^p_p(t)\,\bigl[(p+1)\xi^2_p(t)-(p-1)\bigr]},
\qquad t\in(0,t_p^*).
\end{equation}

\subsubsection{Non--negativeness and kernel of $\mathcal L_{-,Z}$.}
Proposition~\ref{prop:l2} transfers verbatim: its proof rests on the
factorization \eqref{eq:factorLp}, valid for every $p>1$, and invokes only
$\Psi_{a_j}(L)>0$, $\Psi_{a_j}'(L)<0$, the common--derivative condition and
$Z<0$.

\subsubsection{The kernel of $\mathcal L_{+,Z}$ on the decreasing branch system \eqref{branch}}
Proposition~\ref{prop:ker-L1Z} persists: a nontrivial kernel element reduces to
the compatibility $\mathfrak H(t_1)=0$, which by \eqref{eq:5.1iip} is equivalent
to $\alpha_{n,p}'(t_1)=0$ and happens only at $\omega=\omega_{n,p}^*$  where $t_1$ attains the decreasing branch minimum $t_{n,p}^*$.

\subsubsection{The Morse index.}
The Morse--index count of Theorem~\ref{thm:morse1} goes through unchanged, both
ingredients being $p$--general: the half--line Neumann theory of
Appendix~\ref{ap:2} produces monotone functions $m_1(\lambda),m_N(\lambda)$ and
the scalar equation $F(\lambda)=Z$ for any $p>1$, and
Lemma~\ref{lem:LA-negative-generalp} shows that $\mathfrak L_{+,j}$ has a simple
negative Neumann eigenvalue exactly when $t_j<t_p^*$. The quantity $F(0)-Z$
equals, up to a strictly positive factor, the sign of $\mathfrak H(t_1)$.
Phrasing the dichotomy directly through the position of $t_1(\omega)$ relative
to $t_p^*$, the conclusion of Theorem~\ref{thm:morse1} holds verbatim for all
$p>1$:
\begin{equation}\label{eq:morse-dichotomy-p}
\begin{aligned}
t_1(\omega)\in(0,t_p^*)\ &\Longrightarrow\
n(\mathcal L_{+,Z})=n,\qquad
n\bigl(\mathcal L_{+,Z}|_{\dom\cap\xm}\bigr)=1,\\[2pt]
t_1(\omega)\in(t_p^*,1)\ &\Longrightarrow\
n(\mathcal L_{+,Z})=N-n+1,\qquad
n\bigl(\mathcal L_{+,Z}|_{\dom\cap\xm}\bigr)=2.
\end{aligned}
\end{equation}
Translating the position of $t_1$ into clean inequalities on $N,n,\omega$ (as in
the cubic case statement) requires, for general $p$, either a closed form for
$t_{n,p}^*$ or its uniqueness as minimizer of $\alpha_{n,p}$; this is the
only point at which the cubic and general cases truly diverge, however it is purely
presentational.

\subsection{The slope condition}
\label{sec:slope-generalp}

\subsubsection{The charge and the slope functional on the decreasing branch system \eqref{branch}}
The cubic computation of the charge rests on the elementary primitive of
$\operatorname{sech}^2$. For general $p$ the same substitution produces
\[
J_p(t):=\int_t^1(1-\tau^2)^{\beta(p)}\,d\tau,
\qquad \beta(p)=\frac{3-p}{p-1}>-1,
\]
so that
\[
Q(\Theta_{\omega,n})=K_p\,\omega^{\gamma(p)}\,h_p(t_1),
\qquad
h_p(t):=n\,J_p(t)+(N-n)\,J_p\bigl(\xi_p(t)\bigr),
\qquad
\gamma(p)=\frac{5-p}{2(p-1)},
\]
with $K_p>0$ an $\omega$--independent constant; $\gamma(p)>0$ exactly on
$1<p<5$, vanishes at the mass--critical exponent $p=5$, and reverses sign
beyond it. Differentiating $Q$ along the branch yields the $p$--analogue of the
cubic identity \eqref{eq:Nn-cubic-closed}:
\begin{equation}\label{eq:Nnp}
\partial_\omega Q(\Theta_{\omega,n})
=\frac{K_p\,\omega^{\gamma(p)-1}}{2(p-1)\,\alpha_{n,p}'(t_1)}\,
\mathcal N_n^p\bigl(t_1(\omega)\bigr),
\qquad
\mathcal N_n^p(t):=(5-p)\,h_p(t)\,\alpha_{n,p}'(t)+(p-1)\,\alpha_{n,p}(t)\,h_p'(t).
\end{equation}
Since $K_p>0$, $\omega^{\gamma(p)-1}>0$, $(p-1)>0$ and $\alpha_{n,p}'(t_1)<0$ on the decreasing
branch \eqref{branch}, \eqref{eq:Nnp} gives the sign rule
\begin{equation}\label{eq:sign-slope-p}
\operatorname{sign}\bigl(\partial_\omega Q(\Theta_{\omega,n})\bigr)
=-\operatorname{sign}\bigl(\mathcal N_n^p(t_1(\omega))\bigr),
\end{equation}
exactly as in \eqref{eq:sign-slope-cubic}.

\subsubsection{Endpoint asymptotics of $\mathcal N_n^p$.}
As $t\to0^+$ one has $\xi_p(t)\to1^-$, hence
\[
\alpha_{n,p}(t)\sim \frac{n}{t},\qquad
\alpha_{n,p}'(t)\sim -\frac{n}{t^2},\qquad
h_p(t)\to n\,J_p(0)>0,\qquad
h_p'(t)\to -n,
\]
the last two following from $J_p(\xi_p(t))\to0$, $J_p'(t)\to-1$ and
$J_p'(\xi_p(t))\,\xi_p'(t)\to0$. The two contributions to $\mathcal N_n^p$ in
\eqref{eq:Nnp} are of \emph{different} orders: the $\alpha_{n,p}'$ term is
$O(t^{-2})$ while the $\alpha_{n,p}$ term is only $O(t^{-1})$. Therefore, the former term
dictates the limit whenever its coefficient $(5-p)$ does not vanish and the sign of
$\mathcal N_n^p$ near $t=0$ is \emph{definite for every} $p>1$. Analyzing $\mathcal{N}_n^p$ in \eqref{eq:Nnp} as $t\to0^+$:
\begin{itemize}
\item For $1<p<5$ the leading term $(5-p)\,h_p(t)\,\alpha_{n,p}'(t)\sim
-(5-p)\,n^2 J_p(0)\,t^{-2}\to-\infty$ and dominates the lower--order
$(p-1)\alpha_{n,p}(t)h_p'(t)\sim-(p-1)n^2 t^{-1}$. Therefore $\mathcal N_n^p(t)\to-\infty$.
\item For $p=5$ the leading term vanishes identically and the surviving term
$(p-1)\alpha_{n,p}(t)h_p'(t)\sim-4n^2 t^{-1}\to-\infty$, so again
$\mathcal N_n^p(t)\to-\infty$.
\item For $p>5$ the leading term reverses sign and so
$(5-p)\,h_p(t)\,\alpha_{n,p}'(t)\sim(p-5)\,n^2 J_p(0)\,t^{-2}\to+\infty$. However, it still dominates the lower-order $\alpha_{n,p}$ term, leading to $\mathcal N_n^p(t)\to+\infty$.
\end{itemize}
Hence
\begin{equation}\label{eq:Nn-limit}
\mathcal N_n^p(t)\xrightarrow[t\to0^+]{}
\begin{cases}
-\infty, & 1<p\le 5,\\[2pt]
+\infty, & p>5.
\end{cases}
\end{equation}
We conclude $\mathcal N_n^p$ is strictly negative on a right neighbourhood of $0$ when
$1<p\le5$ and strictly positive there when $p>5$.

\subsubsection{The high--frequency threshold for the decreasing branch system \eqref{branch}}
By \eqref{eq:Nn-limit}, $\mathcal N_n^p$ keeps a constant sign on a right
neighborhood of $0$, therefore the value
\[
t_{n,p}^\flat:=\sup\bigl\{\tau\in(0,t_p^*]:\ \mathcal N_n^p\ \text{has constant sign on}\ (0,\tau)\bigr\}\in(0,t_p^*]
\]
is well defined. Setting $\hat t_{n,p}:=\min\{t_{n,p}^\flat,\,t_{n,p}^*\}\le t_p^*$
we define the \emph{high--frequency threshold}
\begin{equation}\label{eq:omega-starstar}
\omega_{n,p}^{**}:=\frac{\bigl(\alpha_{n,p}(\hat t_{n,p})\bigr)^2}{Z^2}
\ \ \ge\ \omega_{n,p}^*.
\end{equation}
By construction $\omega>\omega_{n,p}^{**}$ is equivalent to
$t_1(\omega)\in(0,\hat t_{n,p})$, which places $t_1(\omega)$ in the type--$A$
region $(0,t_p^*)$ and pins down the sign of $\mathcal N_n^p(t_1(\omega))$. By the
sign rule \eqref{eq:sign-slope-p} the latter is translated as follows:
\begin{equation}\label{eq:slope-positive-hf}
\omega>\omega_{n,p}^{**}\ \Longrightarrow\
t_1(\omega)\in(0,t_p^*)
\quad\text{and}\quad
\partial_\omega Q(\Theta_{\omega,n})
\begin{cases}>0,&1<p\le5,\\[2pt]<0,&p>5.\end{cases}
\end{equation}

\subsection{(In)stability result for the decreasing branch system \eqref{branch}}
\label{sec:insta-generalp}

The Morse--index dichotomy \eqref{eq:morse-dichotomy-p} and the high--frequency
slope sign \eqref{eq:slope-positive-hf} combine into the main result of the
section, in which the mass--critical exponent $p=5$ separates a stability regime
from an instability one.
\begin{theorem}
\label{thm:insta-highfreq-p}
Let $N\ge 2$, $Z<0$, $p>1$ and $n\in\{1,\dots,N-1\}$, and let
$\omega_{n,p}^{**}\ge\omega_{n,p}^*$ be the high--frequency threshold
\eqref{eq:omega-starstar}. Then for every $\omega>\omega_{n,p}^{**}$ the
standing wave
\[
e^{i\omega t}\Theta_{\omega,n}=e^{i\omega t}\bigl(\Psi_{a_1(\omega)},\dots,\Psi_{a_N(\omega)}\bigr)
\]
provided by Theorem~\ref{thm:existence-p} satisfies:
\begin{enumerate}
\item[\rm(i)] if $1<p\le5$:
  \begin{enumerate}
  \item[\rm(a)] it is orbitally stable in $H^1(\dg)$ whenever $n=1$;
  \item[\rm(b)] it is orbitally stable in $\xm\cap H^1(\dg)$ for every
  $n\in\{1,\dots,N-1\}$;
  \item[\rm(c)] it is spectrally unstable (see Remark~\ref{lintononlin}) in $H^1(\dg)$ whenever $n\ge 2$ and orbitally unstable in $H^1(\dg)$ provided $n\ge2$ and $2<p\le 5$;
  \end{enumerate}
\item[\rm(ii)] if $p>5$: it is orbitally unstable in $\xm\cap H^1(\dg)$, and
hence in $H^1(\dg)$, for every $n\in\{1,\dots,N-1\}$.
\end{enumerate}
\end{theorem}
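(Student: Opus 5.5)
The proof will follow the cubic argument of Section~\ref{sec:mainproof}: we feed the $p$-general spectral facts of Section~\ref{sec:spectral-generalp} and the high-frequency slope sign \eqref{eq:slope-positive-hf} into Theorem~\ref{2main} together with Remark~\ref{lintononlin}.

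\emph{Step 1: reduce to the type-$A$ regime.} Fix $\omega>\omega_{n,p}^{**}$. By \eqref{eq:slope-positive-hf}, $t_1(\omega)\in(0,t_p^*)$, so $t_1\neq \xi_p(t_1)$ and the profile is genuinely discontinuous. Moreover $t_1<t_{n,p}^*$, so we are on the decreasing branch with $\alpha_{n,p}'(t_1)<0$.

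\emph{Step 2: collect the spectral data.}
\begin{itemize}
\item From the $p$-version of Proposition~\ref{prop:l2}: $\mathcal L_{-,Z}\ge0$ and $\ker\mathcal L_{-,Z}=\operatorname{span}\{\Theta_{\omega,n}\}\subset\xm$.
\item From the $p$-version of Proposition~\ref{prop:ker-L1Z}: $\ker\mathcal L_{+,Z}=\{0\}$. This holds because $\mathfrak H(t_1)\neq0$ whenever $\alpha_{n,p}'(t_1)\neq0$, by \eqref{eq:5.1iip}.
\item From the first line of \eqref{eq:morse-dichotomy-p}: $n(\mathcal L_{+,Z})=n$ and $n(\mathcal L_{+,Z}|_{\dom\cap\xm})=1$.
\end{itemize}
We also need global well-posedness in $H^1(\dg)$ and in $\xm\cap H^1(\dg)$ (Theorem~\ref{global} plus Proposition~\ref{lem:group}). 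For $p\le5$ with $p=5$, global existence is only guaranteed below the mass threshold (Theorem~\ref{thm:GWPcrit}). For that case, and for $p>5$, we instead invoke the local version of the GSS theory, which needs only local well-posedness for stability near the orbit (small data stay bounded by the Lyapunov argument) and for instability.

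\emph{Step 3: case split via the slope $\rho(\omega)$.}
\begin{itemize}
\item \textbf{Case $1<p\le5$.} Here $\partial_\omega Q>0$, so $\rho=1$. On $\xm$ the Morse index equals $1=\rho$, which gives (i)(b). When $n=1$ the full Morse index is $1=\rho$, which gives (i)(a).
\item \textbf{Case $n\ge2$.} As in cases (3)(a),(b),(d) of the cubic proof, \cite[Theorem~1.2]{Gril1988} yields at least $n(P\mathcal L_{+,Z})-n(P\mathcal L_{-,Z}^{-1})=n-1\ge1$ positive eigenvalues of the linearization, i.e.\ spectral instability. We will have to check that $n(P\mathcal L_{+,Z})=n-1$, using $\rho=1$, i.e.\ $\langle\mathcal L_{+,Z}^{-1}\Theta,\Theta\rangle<0$. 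Converting spectral to orbital instability (Remark~\ref{lintononlin}) requires the data-to-solution map to be $C^2$. Theorem~\ref{global} provides this only for $p>2$, which explains the restriction $2<p\le5$ in (i)(c).
\item \textbf{Case $p>5$.} Here $\partial_\omega Q<0$, so $\rho=0$. On $\xm$ we have $n(\mathcal L_{+,Z}|_{\xm})-\rho=1$, which is odd. Theorem~\ref{2main} on the invariant subspace $\xm$ gives instability there, and hence in $H^1(\dg)$; the map is $C^2$ since $p>2$.
\end{itemize}

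\emph{Main obstacle.} The delicate points are, first, verifying that the restriction to $\xm$ satisfies every GSS hypothesis: invariance of the flow, the kernel lying in $\xm$, and the symmetry group acting within $\xm$. Second, in the critical and supercritical range $p\ge5$, we must justify the instability argument with only local well-posedness and the requisite regularity of the flow. I would address these by citing the local formulation of Grillakis--Shatah--Strauss and Remark~\ref{lintononlin}.
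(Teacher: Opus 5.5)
Your proposal is correct and follows the paper's proof essentially step by step. You feed the $p$-general kernel and Morse data ($n(\mathcal L_{+,Z})=n$, with Morse index $1$ on $\xm$) and the high-frequency slope sign into Theorem~\ref{2main}, on $H^1(\dg)$ and on the invariant subspace $\xm\cap H^1(\dg)$; you treat $n\ge2$ via \cite[Theorem~1.2]{Gril1988} together with the $C^2$ data--solution map for $p>2$; and you settle $p=5$ by Lyapunov confinement plus the blow-up alternative. Your extra check that $n(P\mathcal L_{+,Z})=n-1$ follows from $\rho=1$ (via $\mathcal L_{+,Z}\partial_\omega\Theta_{\omega,n}=-\Theta_{\omega,n}$) is a correct detail that the paper leaves implicit.
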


\begin{proof}
Fix $\omega>\omega_{n,p}^{**}$. By \eqref{eq:slope-positive-hf},
$t_1(\omega)\in(0,t_p^*)$, so the dichotomy \eqref{eq:morse-dichotomy-p} gives the Morse index values
\[
n(\mathcal L_{+,Z})=n,\qquad
n\bigl(\mathcal L_{+,Z}|_{\dom\cap\xm}\bigr)=1,
\]
while the sign \eqref{eq:slope-positive-hf} governs the value of $\rho(\omega)$.
We feed these into Theorem~\ref{2main} as in the proof of Theorem~\ref{thm:insta} to land the conclusion for cases (i)(a), (i)(b) and (ii). It is worth mentioning that, at the $L^2$-critical exponent $p=5$, the stability statements in items (i)(a) and (i)(b) are conditional on global existence of the solutions near $\Theta_{\omega,n}$ which is extracted from the orbital confinement, as follows.
Let $\mathbf{U}$ denote the solution with initial datum $\mathbf{U}_0$, defined on its maximal interval $[0,T^*)$. By the localized Lyapunov argument underlying Theorem~\ref{2main}, there exists $\varepsilon_0>0$ such that for every $\varepsilon\in(0,\varepsilon_0)$ one can find $\delta>0$ with the following property: if $\|\mathbf{U}_0-\Theta_{\omega,n}\|_{H^1}<\delta$, then
\[
\inf_{\theta\in\mathbb{R}}\,\bigl\|\mathbf{U}(t)-e^{i\theta}\Theta_{\omega,n}\bigr\|_{H^1(\dg)}<\varepsilon
\qquad\text{for all }t\in[0,T^*).
\]
The reverse triangle inequality yields
\[
\|\mathbf{U}(t)\|_{H^1(\dg)}<\varepsilon+\|\Theta_{\omega,n}\|_{H^1(\dg)}
\qquad\text{for all }t\in[0,T^*).
\]
Thus $\sup_{t\in[0,T^*)}\|\mathbf{U}(t)\|_{H^1(\dg)}<\infty$, and the blow-up alternative forces $T^*=+\infty$. Hence $\mathbf{U}$ exists globally in $H^1(\dg)$, and the orbital stability statements of items (i)(a) and (i)(b) are unconditional (for item (b) we note the space $\xm$ is invariant under the flow and that the norm of $\xm\cap H^1(\dg)$ coincides with the
$H^1(\dg)$ norm).

Finally to see item (i)(c) we again use the same argument of the proof of Theorem~\ref{thm:insta} involving \cite[Theorem~1.2]{Gril1988} to obtain spectral instability for $p>1$. We note from Remark~\ref{lintononlin} that spectral instability can be improved to orbital instability provided the data-solution map is at least of class $C^2$, which is the case for $p>2$ according to Theorem~\ref{global}. This finishes the proof. 
\end{proof}

\begin{remark}
\label{rem:generalp-remarks}

We close the section by recording the obstructions that confine the
\emph{stability} conclusions of Theorem~\ref{thm:insta-highfreq-p} to high
frequencies, and by formulating the expected full statement for $1<p\le5$ as a
conjecture.
\begin{enumerate}
\item  The low--frequency analysis in the cubic case ($p=3$) depends on fine properties of the minimum of
$\alpha_{n,p}=\alpha_n$ in \eqref{eq:alphan}: the location of $t_{n,p}^*$ relative to $t_p^*$ and the
monotonicity of $n\mapsto t_{n,p}^*$ provided by the \emph{uniqueness} of
the minimizer of $\alpha_{n,p}$ on $(0,1)$. For general $p>1$, an analytic proof of this uniqueness is delicate, since it requires controlling the implicit involution $\xi_p$, which is not available in closed form, however numerical evidence consistently indicates a unique minimizer for all $p>1$.  The high--frequency
threshold \eqref{eq:omega-starstar}, by contrast, requires none of this (see Remark \ref{p-thres}), it uses
only $\mathcal N_n^p(t)\to\pm\infty$ as $t\to0^+$ and the (possibly) not-optimal decreasing branch system of Theorem~\ref{thm:existence-p}.

\item In the cubic case the control in the sign $\partial_\omega Q(\Theta_{\omega,n})$ was given by the uniqueness of a root of the function $\mathcal{N}_n^3$ in $(0,1)$. 
However, for any $p>1$, establishing such uniqueness is a
delicate task because the factor $\alpha_{n,p}'$ entering $\mathcal N_n^p$ ties this
question to the uniqueness of the minimizer of $\alpha_{n,p}$ discussed in (1).
We are therefore unable to draw an analytic low-frequency conclusion, even
though numerical evidence again points to a unique root of $\mathcal N_n^p$.

\item We conjecture that for $1<p\le5$ the complete conclusion of the cubic
Theorem~\ref{thm:insta} holds verbatim with the same case differentiation inequalities in terms of
$N$, $n$ and $\omega$, while for $p>5$ one only need to do a natural correction accounting the sign inversion of $\partial_\omega Q$ as $\omega$ is large. This conjecture can be proved under two uniqueness
assumptions: that $\alpha_{n,p}$ has a unique minimizer
$t_{n,p}^*\in(0,1)$ and that $\mathcal N_n^p$ has a unique root
$t_{n,p}^\sharp\in(0,1)$. Under these hypotheses the existence threshold
$\omega_{n,p}^*$ becomes optimal and $t_{n,p}^\flat=t_{n,p}^\sharp$ defines a single
frequency $\omega_{n,p}^\sharp$.

\item In case of failure of either uniqueness assumption in the conjecture presented in (4), every spectral and slope argument
above remains valid on a non--optimal decreasing branch system (one in which
the minimum $t_{n,p}^*$ of $\alpha_{n,p}$ is not necessarily the infimum).
The (in)stability conclusions then survive, but must be expressed through the
position of the branch parameter $t_1(\omega)$ relative to $t_p^*$ (which fixes
the Morse index via \eqref{eq:morse-dichotomy-p}) and relative to the possibly
several values $t_{n,p}^\sharp$ at which $\mathcal N_n^p$ changes sign (which fix
the slope of $Q$ via \eqref{eq:sign-slope-p}), rather than through clean
inequalities in $N$, $n$ and $\omega$.
\end{enumerate}
\end{remark}

\section*{Acknowledgments}
J. Angulo was partially funded by CNPq/Brazil Grant and  S\~ao Paulo Research Foundation-FAPESP/Thematic Project, process number 2023/13426-8. A.  Mu\~noz was financed by FAPESP, Brazil, process number 2024/20623-7. 
\vskip0.1in
 \noindent
{\bf Data availability statement}. Data sharing not applicable to this article as no datasets were generated or analyzed during the current study.

\vskip0.1in
 \noindent
{\bf Conflict of interest statement}. The authors declare that there are no conflicts of interest regarding the content of this paper.

\appendix

\section{Neumann problem on the half-line for $p>1$}
\label{ap:1}

In this appendix we record the one-dimensional spectral fact needed in the
Morse index computation of $\mathcal L_{+,Z}$, namely a sharp description of
the negative-eigenvalue count of the scalar Neumann half-line operator arising
from the linearization around a soliton tail.

\medskip

Let $p>1$ and $\omega,L>0$. The unique positive decaying solution of
\[
-\Psi''+\omega\Psi-\Psi^{p}=0,
\qquad x\ge L,
\]
is given (up to translation) by the one-dimensional soliton
\begin{equation}
    \label{eq:OmegaS_w}\mathfrak{S}_\omega(y)
=
\Big(\frac{(p+1)\omega}{2}\Big)^{\frac{1}{p-1}}
\operatorname{sech}^{\frac{2}{p-1}}
\!\Big(\frac{p-1}{2}\sqrt{\omega}\,y\Big).
\end{equation}

Fix a shift parameter $a>0$ and define the half-line tail profile $\Psi_{a,p, \omega}=\Psi_{a,p}$
\begin{equation}\label{eq:Theta-p}
\Psi_{a,p}(x)
:=
\Big(\frac{(p+1)\omega}{2}\Big)^{\frac{1}{p-1}}
\operatorname{sech}^{\frac{2}{p-1}}
\!\Big(\frac{p-1}{2}\sqrt{\omega}\,(x-L)+a\Big),
\qquad x\ge L.
\end{equation}

\medskip

We consider the scalar Neumann operator
\begin{equation}\label{eq:Lplus-halfline-p}
\mathfrak L_{+,a}^{(p)}
:=
-\partial_x^2+\omega-p\,\Psi_{a,p}^{p-1}(x),
\qquad
D(\mathfrak L_{+,a}^{(p)})
=
\{u\in H^2([L,\infty)):\ u'(L)=0\}.
\end{equation}

\medskip

It will be convenient to introduce the dimensionless parameter
\begin{equation}\label{eq:kappa-p}
\kappa_p:=\frac{2p(p+1)}{(p-1)^2}>0,
\end{equation}
and the shift threshold
\begin{equation}\label{eq:ap-threshold}
a_p^{\sharp}:=\;
\operatorname{arccosh}\!\biggl(\sqrt{\tfrac{p+1}{2}}\biggr)
=
\operatorname{arctanh}\!\biggl(\sqrt{\tfrac{p-1}{p+1}}\biggr).
\end{equation}
For $p=3$ one has $\kappa_3=6$ and $a_3^{\sharp}=\operatorname{arccosh}(\sqrt{2})\approx 0.881$.

\begin{lemma}\label{lem:LA-negative-generalp}
Let $p>1$ and $\omega,L>0$. For any shift $a>0$, the operator
$\mathfrak L_{+,a}^{(p)}$ defined in \eqref{eq:Lplus-halfline-p} satisfies
\[
n\bigl(\mathfrak L_{+,a}^{(p)}\bigr)
=
\begin{cases}
1, & 0<a<a_p^{\sharp},\\[2pt]
0, & a\ge a_p^{\sharp},
\end{cases}
\]
where $a_p^{\sharp}$ is defined in \eqref{eq:ap-threshold}.
Moreover, on the range $0<a<a_p^{\sharp}$ the unique negative eigenvalue
$\lambda_p(a)<0$ is strictly increasing, with
$\lambda_p(a)\uparrow 0$ as $a\uparrow a_p^{\sharp}$.
\end{lemma}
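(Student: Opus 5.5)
The plan is to view $\mathfrak L_{+,a}^{(p)}$ as the restriction to $[\sqrt{\omega}\,\cdot\,]$-rescaled half-line $y\ge a$ of the whole-line linearized operator. Set $y=\frac{p-1}{2}\sqrt{\omega}(x-L)+a$. Dividing by $\omega(p-1)^2/4$, the operator becomes unitarily equivalent to a positive multiple of the Pöschl--Teller operator
\[
T:=-\partial_y^2+\tfrac{4}{(p-1)^2}-\kappa_p\,\operatorname{sech}^2(y)\quad\text{on }[a,\infty),\qquad u'(a)=0.
\]
The negative spectrum, with multiplicities, is preserved. Here $\kappa_p=s(s+1)$ with $s=\frac{p+1}{p-1}$.

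On the whole line, $T$ has exactly one negative eigenvalue. Its ground state is $\phi_0=\operatorname{sech}^{s}(y)=\mathfrak S^{(p+1)/2}$ up to scaling, and its kernel is spanned by $\mathfrak S'$ (in $y$-variables, $\operatorname{sech}^{2/(p-1)}$ differentiated).

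The key object is the decaying solution $\rho_\lambda$ of $(T-\lambda)\rho=0$ on $[a,\infty)$, for $\lambda\le 0$. This is the same $\rho_{j,\lambda}$ used in the proof of Theorem~\ref{thm:morse1}. A Neumann eigenvalue $\lambda<0$ on $[a,\infty)$ occurs exactly when $\rho_\lambda'(a)=0$. By Sturm oscillation/comparison for Neumann problems on half-lines, the number of negative Neumann eigenvalues equals the number of zeros of $\rho_0'$ in $(a,\infty)$.

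At $\lambda=0$ the decaying solution is $\rho_0=\mathfrak S'(y)\propto-\operatorname{sech}^{2/(p-1)}(y)\tanh(y)$. So the relevant zeros are those of $\mathfrak S''$, i.e. the inflection points of the soliton. Since $\mathfrak S$ is even, unimodal, and solves $\mathfrak S''=c(\mathfrak S-\mathfrak S^p)$ after scaling, $\mathfrak S''$ has exactly one zero $y_*>0$ on $(0,\infty)$. It is located at $\mathfrak S^{p-1}=\frac{2}{p+1}\cdot\frac{(p+1)}{2}$ in normalized units, which gives $\cosh^2 y_*=\frac{p+1}{2}$, i.e. $y_*=a_p^\sharp$. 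This matches \eqref{eq:ap-threshold} and the vertex identity $\Psi''(L)=0\iff t=t_p^*$.

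I will justify the counting principle concretely. Consider the Neumann eigenvalue problem on $[a,\infty)$ as a function of $a$. For $a=0$ (even restriction), the Neumann problem inherits the even spectrum of the whole-line operator: exactly one negative eigenvalue ($\phi_0$), and $0$ is not a Neumann eigenvalue because $\mathfrak S'$ is odd. As $a$ increases, Neumann eigenvalues move continuously. Moreover, $0$ is a Neumann eigenvalue on $[a,\infty)$ iff $\mathfrak S''(a)=0$, iff $a=a_p^\sharp$. Hence the count is constant on $(0,a_p^\sharp)$ and on $(a_p^\sharp,\infty)$. For large $a$ the potential is uniformly small, so $T>0$ and the count is $0$ there.

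For the monotonicity, I will use the Hadamard-type formula for the Neumann eigenvalue $\lambda(a)$ with normalized eigenfunction $\chi_a$. Differentiating in the endpoint gives
\[
\lambda'(a)=\big(V(a)-\lambda(a)\big)\chi_a(a)^2,\qquad V(y):=\tfrac{4}{(p-1)^2}-\kappa_p\operatorname{sech}^2 y .
\]
This is obtained by extending $\chi_a$ and integrating $\chi_a(T-\lambda)\partial_a\chi_a$ by parts, in the same spirit as the computation of $m_j'(\lambda)$. At a Neumann endpoint, $\chi_a''(a)=(V(a)-\lambda)\chi_a(a)$.

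To get a strict sign, I will argue as follows. $\chi_a>0$ is the ground state with $\chi_a'(a)=0$, and it must decrease to $0$, so $\chi_a''(a)>0$. This needs $\chi_a(a)\neq 0$, which holds because otherwise $\chi_a\equiv 0$. Hence $V(a)-\lambda>0$, so $\lambda'(a)>0$. The same formula excludes a second eigenvalue crossing, which confirms simplicity, and continuity gives $\lambda(a)\uparrow 0$ as $a\uparrow a_p^\sharp$.

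The main obstacle I expect is making the continuity/crossing argument rigorous, in particular ruling out eigenvalues entering from $-\infty$ or from the essential spectrum edge. I would handle this with the uniform lower bound $T\ge \frac{4}{(p-1)^2}-\kappa_p$ and the positivity of the essential spectrum threshold. Alternatively, I would use the $m$-function monotonicity of Appendix~\ref{ap:2} to convert the count into the zero count of $\rho_\lambda'(a)$ via the Prüfer angle.
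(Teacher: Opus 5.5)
Your count of negative eigenvalues is correct and takes a different route from the paper. You run a continuation in $a$:
\begin{itemize}
\item at $a=0$ the Neumann problem is the even sector of the whole-line P\"oschl--Teller operator, so it has exactly one negative eigenvalue;
\item after translating $[a,\infty)$ to a fixed half-line, the potentials $V(a+\cdot)$ depend continuously on $a$ in sup norm, so the negative count can change only when $0$ is a Neumann eigenvalue;
\item since the decaying zero-energy solution is $\mathfrak S'$, this happens only at $a=a_p^\sharp$;
\item for large $a$ one has $V>0$ on $[a,\infty)$.
\end{itemize}
The paper never varies $a$ continuously. It gets $n\le 1$ from the even reflection $q[Ev]\le 2q_a[v]$. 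It gets $n=0$ at $a_p^\sharp$ because $\mathfrak S'|_{[a_p^\sharp,\infty)}$ is a zero mode without zeros, hence the ground state. It gets $n\ge 1$ for $a<a_p^\sharp$ by using the constant extension of that zero mode as a trial function. Your homotopy argument needs only the explicit kernel and the positivity of the essential-spectrum threshold. It also makes your general ``Sturm'' claim about zeros of $\rho_0'$ unnecessary; that claim is not a standard Neumann oscillation theorem as stated. At $a=a_p^\sharp$ itself you should add the no-zeros zero-mode observation, or note that a negative eigenvalue there would persist to the right.

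The monotonicity step, however, is wrong as written. Differentiate $-\chi_a''+V\chi_a=\lambda(a)\chi_a$ in $a$, pair with $\chi_a$, and integrate by parts on $[a,\infty)$. This gives $\lambda'(a)=(\partial_a\chi_a)'(a)\,\chi_a(a)$. Differentiating the boundary condition $\chi_a'(a)=0$ along $a$ gives $(\partial_a\chi_a)'(a)=-\chi_a''(a)$. Hence
\[
\lambda'(a)=-\chi_a''(a)\,\chi_a(a)=\bigl(\lambda(a)-V(a)\bigr)\chi_a(a)^2 ,
\]
which has the opposite sign to yours. As a check, for $V\equiv 0$ on $[a,b]$ with Neumann ends, $\pi^2/(b-a)^2$ has $a$-derivative $2\pi^2/(b-a)^3$, as this formula predicts.

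Your claim $\chi_a''(a)>0$ is also false. A positive $\chi_a$ with $\chi_a'(a)=0$ that decreases away from $a$ has a maximum there. Concretely, at $a=0$ one has $\chi_0=\operatorname{sech}^{s}$ with $s=\frac{p+1}{p-1}$, $\lambda(0)=1-2s$ and $V(0)=1-3s$, so $V(0)-\lambda(0)=-s<0$. For $p=3$ this reads $V(0)=-5<-3=\lambda(0)$, and the same inequality persists for small $a>0$. Your two sign errors cancel, so the conclusion survives but the argument does not.

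The missing step is this. For $a>0$, $V$ is strictly increasing on $[a,\infty)$. Suppose $V(a)\ge\lambda(a)$. Then $V-\lambda(a)>0$ on $(a,\infty)$, and $\chi_a''=(V-\lambda)\chi_a$ with $\chi_a(a)>0$, $\chi_a'(a)=0$ forces $\chi_a'>0$ on $(a,\infty)$, contradicting decay. Thus $V(a)<\lambda(a)$, so $\lambda'(a)>0$. At the crossing, $\lambda'(a_p^\sharp)=-V(a_p^\sharp)\chi^2(a_p^\sharp)=\frac{4}{p-1}\chi^2(a_p^\sharp)>0$, so $\lambda(a)\uparrow 0$ transversally.

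The same inequality $V<\lambda$ is what is actually needed to make the paper's constant-extension comparison strict. Strictness of its Rayleigh-quotient inequality amounts to $\int_{a_1}^{a_2}\bigl(V_p-\mu_p(a_2)\bigr)\,dy<0$, not merely $V_p<0$.
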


\begin{proof}
Set
\[
y:=\frac{p-1}{2}\sqrt{\omega}\,(x-L)+a,
\qquad y\in[a,\infty),
\]
and $u(y):=f(x(y))$. Since
\begin{equation}\label{changes}
\Psi_{a,p}^{p-1}(x)
=
\frac{(p+1)\omega}{2}\,\operatorname{sech}^{2}(y),
\qquad
\partial_x^2=\frac{(p-1)^2\omega}{4}\,\partial_y^2,
\end{equation}
the eigenvalue problem $\mathfrak L_{+,a}^{(p)}f=\lambda f$ becomes
\begin{equation*}\label{eqsoliton}
-\frac{(p-1)^2\omega}{4}\,u''+\omega\,u
-\frac{p(p+1)\omega}{2}\operatorname{sech}^2(y)\,u=\lambda u.
\end{equation*}
Dividing by $\tfrac{(p-1)^2\omega}{4}$, this is equivalent to
\[
\widetilde{\mathfrak L}_{a,p}\,u=\mu\,u,
\qquad \mu:=\frac{4\lambda}{(p-1)^2\omega},
\]
where
\begin{equation}\label{eq:Ltilde-p}
\widetilde{\mathfrak L}_{a,p}
:=
-\partial_y^2+\frac{4}{(p-1)^2}-\kappa_p\,\operatorname{sech}^2(y),
\qquad
D(\widetilde{\mathfrak L}_{a,p})
=
\{u\in H^2([a,\infty)):\ u'(a)=0\},
\end{equation}
and $\kappa_p$ is the constant \eqref{eq:kappa-p}. Note that the expression
$\mu=4\lambda/((p-1)^2\omega)$ preserves the order for $\lambda, \mu$ in $(-\infty,0)$. Hence it suffices to prove
\[
n(\widetilde{\mathfrak L}_{a,p})=
\begin{cases}
1, & 0<a<a_p^{\sharp},\\[2pt]
0, & a\ge a_p^{\sharp}.
\end{cases}
\]

\medskip
Let us first show that $n(\tilde{\mathfrak{L}}_{a,p})\le 1$ for any $a>0$.
\medskip
Consider the whole-line operator
\begin{equation}\label{allR}
\widetilde{\mathfrak L}_{p}
:=
-\partial_y^2+\tfrac{4}{(p-1)^2}-\kappa_p\,\operatorname{sech}^2(y),
\qquad
D(\widetilde{\mathfrak L}_{p})=H^2(\R),
\end{equation}
then
\[
n(\widetilde{\mathfrak L}_{p})=1,
\qquad
\ker(\widetilde{\mathfrak L}_{p})=\mathrm{span}\{\mathfrak{S}_\omega'\}.
\]
Indeed, \eqref{changes} implies $\widetilde{\mathfrak L}_{p}(\mathfrak{S}_\omega')=0$, moreover, $\mathfrak{S}_\omega'(x)=0$ if and only if $x=0$, then classical oscillation theory implies $n(\widetilde{\mathfrak L}_{p})=1$ and the simplicity of the kernel (see \cite{BerShu91}, Chapter 2, \S2 and  \S3).

\medskip
Denote by $V_p(y):=\tfrac{4}{(p-1)^2}-\kappa_p\operatorname{sech}^2(y)$ the
rescaled potential. Consider the quadratic forms $q: H^1(\mathbb R)\to \mathbb R$ and $q_a: H^1([a,\infty))\to  \mathbb R $, associated to the operators in \eqref{allR} and \eqref{eq:Ltilde-p},  defined by
\[
q[v]=\int_{\R}\!\bigl(|v'|^2+V_p(y)|v|^2\bigr)\,dy,
\qquad
q_a[v]=\int_{a}^{\infty}\!\bigl(|v'|^2+V_p(y)|v|^2\bigr)\,dy.
\]
For $v\in H^1([a,\infty))$ with $v'(a)=0$, define the \emph{reflection} extension
$Ev\in H^1(\R)$ by
\[
(Ev)(y):=
\begin{cases}
v(2a-y), & y\le a,\\
v(y), & y\ge a.
\end{cases}
\]
Then $(Ev)'(y)=-v'(2a-y)$ for $y<a$, so
$|(Ev)'(y)|^2=|v'(2a-y)|^2$, and by change of variable
\[
\int_{-\infty}^{a}|(Ev)'|^2\,dy=\int_{a}^{\infty}|v'|^2\,dy,
\qquad
\int_{-\infty}^{a}|Ev|^2\,dy=\int_{a}^{\infty}|v|^2\,dy.
\]
Moreover, for $y>a$ we have $y>|2a-y|$, and since $\operatorname{sech}^2(y)$ is even and
strictly decreasing for $y\in[0,\infty)$, $\operatorname{sech}^2(y)\le\operatorname{sech}^2(2a-y)$,
so $V_p(y)\ge V_p(2a-y)$. Hence
\[
\int_{-\infty}^{a}V_p(y)|Ev|^2\,dy
=\int_{a}^{\infty}V_p(2a-y)|v(y)|^2\,dy
\le \int_a^\infty V_p(y)|v|^2\,dy.
\]
Summing contributions,
\[
q[Ev]\le 2\,q_a[v],
\qquad
\|Ev\|_{L^2(\R)}^2=2\,\|v\|_{L^2([a,\infty))}^2.
\]
By the min-max principle, if $\widetilde{\mathfrak L}_{a,p}$ had two linearly
independent negative directions $v_1,v_2$, then $Ev_1,Ev_2$ would form a two-dimensional
negative subspace for $\widetilde{\mathfrak L}_{p}$, contradicting
$n(\widetilde{\mathfrak L}_{p})=1$. Therefore $n(\widetilde{\mathfrak L}_{a,p})\le 1$, as claimed.


\medskip
We now see that for $a$ large the operator $\tilde{\mathfrak{L}}_{p,a}$ is non-negative. In fact, using $\kappa_p-\tfrac{4}{(p-1)^2}=\tfrac{2p(p+1)-4}{(p-1)^2}>0$, we
find
\[
V_p(y)<0\iff \operatorname{sech}^2(y)>\frac{4}{(p-1)^2\kappa_p}=\frac{2}{p(p+1)}.
\]

\medskip
For $a\ge \operatorname{arccosh}(\sqrt{\tfrac{p(p+1)}{2}}):=C_0$ one has $\operatorname{sech}^2(y)\le\operatorname{sech}^2(a)\le \tfrac{4}{(p-1)^2\kappa_p}$ for all $y\ge a$, so $V_p\ge 0$ on $[a,\infty)$. For any $u\in D(\widetilde{\mathfrak L}_{a,p})$, an integration by parts (using $u'(a)=0$) gives
\[
\langle \widetilde{\mathfrak L}_{a,p}u,u\rangle
=
\int_a^\infty |u'|^2\,dy+\int_a^\infty V_p(y)|u|^2\,dy\ge 0.
\]

\medskip
We now restrict ourselves to the case $0<a<C_0$. 

\medskip
We now show that whenever eigenvalues $\mu_p(a)$ exist for $\tilde{\mathfrak{L}}_{a,p}$ they must be strictly increasing in $a\in (0,C_0)$.

\smallskip
\noindent Suppose that for  $0<a_1<a_2<C_0$ there exist $L^2$-normalized eigenfunctions $u_{a_1}$ and $ u_{a_2}$ of $\widetilde{\mathfrak L}_{a_i,p}$ with eigenvalues $\mu_p(a_i)<0$, $i=1,2$.
The Neumann condition $u_{a_2}'(a_2)=0$ together with the ODE forces
$u_{a_2}(a_2)\ne 0$, since otherwise $u_{a_2}\equiv 0$.

Define $Eu_{a_2}\in H^1([a_1,\infty))$ by
\[
(Eu_{a_2})(y):=\begin{cases}
u_{a_2}(a_2), & y\in[a_1,a_2],\\
u_{a_2}(y), & y\ge a_2.
\end{cases}
\]
Then $(Eu_{a_2})'=0$ on $(a_1,a_2)$, so
\begin{equation}\label{eq:mono1-p}
q_{a_1}[Eu_{a_2}]
=
q_{a_2}[u_{a_2}]
+
|u_{a_2}(a_2)|^2\int_{a_1}^{a_2}V_p(y)\,dy<q_{a_2}[u_{a_2}],
\end{equation}
\begin{equation}\label{eq:mono2-p}
\|Eu_{a_2}\|_{L^2([a_1,\infty))}^2
=
(a_2-a_1)|u_{a_2}(a_2)|^2+\|u_{a_2}\|_{L^2([a_2,\infty))}^2>\|u_{a_2}\|_{L^2([a_2,\infty))}^2,
\end{equation}
where we used that for $a_2<C_0$ one has $V_p(y)<0$ on $[a_1,a_2]$, so the
integral in \eqref{eq:mono1-p} is strictly negative. 

Using the min-max characterization we get,
\[
\mu_p(a_1)
\le
\frac{q_{a_1}[Eu_{a_2}]}{\|Eu_{a_2}\|_{L^2([a_1,\infty))}^2}<
\frac{q_{a_2}[u_{a_2}]}{\|u_{a_2}\|_{L^2([a_2,\infty))}^2}
=
\mu_p(a_2),
\] proving monotonicity. 

\medskip
Now, note the (rescaled) soliton derivative $f(y)=\operatorname{sech}^{2/(p-1)}(y)\tanh(y)$ solves $\tilde{\mathfrak{L}}_{a,p}f=0$ for any $a>0$ but $f'(a)=0$ (equivalently $\mathfrak{S}_\omega''(L)=0$) happens if and only if $a=a^\sharp_p$. Since $f$ is strictly positive on $[a_p^\sharp,\infty)$ we conclude from Sturm-Liouville oscillation theory on the half-line (see \cite{BerShu91}) that $0$ is the least eigenvalue of $\tilde{\mathfrak{L}}_{a_p^\sharp,p}$ with eigenfunction $g_p^\sharp=f\bigr|_{[a^\sharp_p,\infty)}$. From the monotonicity, we derive $n(\tilde{\mathfrak{L}}_{a,p})=0$ for any $a\ge a_p^\sharp$. 

\medskip
To prove $n(\tilde{\mathfrak{L}}_{a,p})=1$ for $0<a<a_p^\sharp$ we simple note that the extension $E[g_p^\sharp]$ of $g_p^\sharp$ to $[a,\infty)$ satisfies $$\frac{q_a[Eg_p^\sharp]}{\|Eg_p^\sharp\|^2_{L^2([a,\infty))}}<\frac{q_{a_p^\sharp}[g_p^\sharp]}{\|g_p^\sharp\|^2_{L^2([a_p^\sharp,\infty))}}=0, \qquad 0<a<a_p^\sharp.$$

\medskip
Finally, converting back to the original eigenvalue via
$\lambda_p(a)=\tfrac{(p-1)^2\omega}{4}\,\mu_p(a)$ preserves sign, strict monotonicity, and the limit $\lambda_p(a)\uparrow 0$ as $a\uparrow a_p^{\sharp}$.
\end{proof}

\begin{remark}We note that the threshold value $a_p^{\sharp}$ in Lemma \ref{lem:LA-negative-generalp}, governing the Morse index of $(\mathfrak L_{+,a}^{(p)}, D(\mathfrak L_{+,a}^{(p)}))$ defined in \eqref{eq:Lplus-halfline-p}, arises naturally from a perturbation argument. For completeness, we sketch this approach below. 

We split the analysis according to the position of the shift $a$ relative to the threshold $a_p^{\sharp}$, treating the threshold case itself via perturbation theory.

We first consider $a\neq a_p^{\sharp}$. In this case $\Psi_{a,p}''(L)\neq 0$, and it is easy to see that $\ker(\mathfrak L_{+,a}^{(p)})=\{0\}$.

We next turn to the threshold value $a=a_p^{\sharp}$. Here $\Psi_{a,p}'\in D(\mathfrak L_{+, a}^{(p)})$ and belongs to the kernel of $\mathfrak L_{+, a}^{(p)}$, so by oscillation theory $\ker(\mathfrak L_{+, a}^{(p)})=\{\Psi_{a,p}'\}$ (see \cite[Chapter~2, \S 2 and \S 3]{BerShu91}). Moreover, since $\Psi_{a,p}'$ does not change sign on $[L,+\infty)$, it follows that $n\bigl(\mathfrak L_{+,a}^{(p)}\bigr)=0$. One can also show that $\langle \mathfrak L_{+,a}^{(p)} v, v\rangle\geqq 0$ for all $v\in D(\mathfrak L_{+, a}^{(p)})$ (see the proof of Proposition~10.9, item~(ii), in \cite{AC}).

It remains to determine the Morse index for $a\neq a_p^{\sharp}$, which we obtain by perturbing about the threshold case just analyzed. It is not difficult to show that $\lim_{a\to a_p^{\sharp}}\Psi_{a,p}=\Psi_{a_p^{\sharp},p}$ in $H^1([L,+\infty))$. Moreover, the family $\{\mathfrak L_{+, a}^{(p)}\}_{a\in\mathbb R^{+}}$ is a real-analytic family of self-adjoint operators of type (B) in the sense of Kato (see \cite[Theorem VII-4.2]{kato}), and
\[
\lim_{a\to a_p^{\sharp}}\widehat{\delta}\big(\mathfrak L_{+, a}^{(p)}, \mathfrak L_{+, a_p^{\sharp} }^{(p)}\big)=0,
\]
where $\widehat{\delta}$ denotes the gap metric (see \cite[Theorem 2.14]{kato}). Combining the spectral structure of $\mathfrak L_{+, a_p^{\sharp} }^{(p)}$ established above with \cite[Theorem IV-3.16]{kato} and the Kato-Rellich Theorem (\cite[Theorem XX.8]{RS4}), we obtain two analytic functions $\Lambda$ and $\Pi$ on a neighborhood $I(a_p^{\sharp})=(a_p^{\sharp}-\epsilon, a_p^{\sharp}+\epsilon)$ of $a_p^{\sharp}$, with $\Lambda: I(a_p^{\sharp})\to \mathbb R$ and $\Pi: I(a_p^{\sharp})\to L^2([L,+\infty))$, such that $\Lambda(a_p^{\sharp})=0$ and $\Pi(a_p^{\sharp})=\Psi'_{a_p^{\sharp},p}$. For every $a\in I(a_p^{\sharp})$, $\Lambda(a)$ is the simple isolated first eigenvalue of $\mathfrak L_{+, a}^{(p)}$, and $\Pi(a)$ is its associated eigenfunction. Furthermore, $\epsilon$ can be chosen small enough that, for $a\in I(a_p^{\sharp})$, the spectrum of $\mathfrak L_{+, a}^{(p)}$ is positive except possibly for the first eigenvalue (recall that zero is an isolated eigenvalue of $\mathfrak L_{+, a_p^{\sharp} }^{(p)}$).

By Taylor's Theorem we then have the expansions
\begin{equation}\label{Taylor}
\Lambda(a)= \Lambda'(a_p^{\sharp})(a-a_p^{\sharp})+O(|a-a_p^{\sharp}|^2)\quad\text{and}\quad \Pi(a)= \Psi'_{a_p^{\sharp},p} + \Pi'(a_p^{\sharp})(a-a_p^{\sharp})+O(|a-a_p^{\sharp}|^2).
\end{equation}
Following the ideas in the proof of Proposition~10.3 in \cite{AC}, one finds
\[
\Lambda'(a_p^{\sharp})=-\frac{2p}{\|\Psi'_{a_p^{\sharp},p}\|^2}\int_L^\infty (\Psi'_{a_p^{\sharp},p})^3 \Psi^{p-2}_{a_p^{\sharp},p}\, dx + O(a-a_p^{\sharp}).
\]
Since $\Psi'_{a_p^{\sharp},p}$ is negative, it follows that $\Lambda'(a_p^{\sharp})>0$ for sufficiently small $|a-a_p^{\sharp}|$, which in view of \eqref{Taylor} shows that $n\bigl(\mathfrak L_{+,a}^{(p)}\bigr)=1$ for $a<a_p^{\sharp}$ and $n\bigl(\mathfrak L_{+,a}^{(p)}\bigr)=0$ for $a\geqq a_p^{\sharp}$, at least for $a\approx a_p^{\sharp}$. Finally, to extend this description of the Morse index to all $a>0$, we use a continuation argument based on the case $a\neq a_p^{\sharp}$ treated above together with the Riesz projection (see the proof of Proposition~10.4 in \cite{AC}).
\end{remark}

\section{Differentiation with respect to \texorpdfstring{$\lambda$}{lambda}}\label{ap:2}

Throughout this section we assume $\omega>0$ and $p>1$ are fixed, $\Psi_a$ denotes the
positive soliton tail \eqref{eq:Theta-p} with shift $a>0$, and $L\in\R$ is fixed. We
write $C_b([L,\infty))$ for the Banach space of bounded continuous
functions equipped with the supremum norm.

The idea of this section is to show the following result which was used in the proof of Theorem \ref{thm:morse1}.

\begin{lemma}\label{lem:apb-generalp}
Let $R_p:=-\partial_x^2+\omega-p\,\Psi^{p-1}_a(x)$.  Consider the EDO's problem on $[L,\infty)$
\begin{equation}\label{edo}
(R_p-\lambda)f=0,
\qquad f(x)\to 0 \;\;\text{as}\;\; x\to +\infty.
\end{equation}
For every $\lambda<0$ there exists a nontrivial solution of \eqref{edo} (unique up to a multiplicative constant)
$f_\lambda\in H^2([L,\infty))$ with
\[
\lim_{x\to +\infty}e^{\sqrt{\omega-\lambda}(x-L)}f_\lambda(x)=1.
\]
 Moreover, for every open interval $I\subset(-\infty,0)$ the maps
\[
I\ni\lambda\;\longmapsto\; f_\lambda\in H^2([L,\infty)),
\qquad
I\ni\lambda\;\longmapsto\; f_\lambda(L)\in\C
\]
are of class $C^1$.
\end{lemma}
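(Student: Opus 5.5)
The plan is to construct $f_\lambda$ as a Jost-type solution through a Volterra integral equation. I will rely on the exponential decay of the potential $q(x):=p\,\Psi_a^{p-1}(x)$, which satisfies $0<q(x)\le C e^{-(p-1)\sqrt{\omega}(x-L)}$ on $[L,\infty)$. Set $k=k(\lambda):=\sqrt{\omega-\lambda}>\sqrt{\omega}$, which is smooth in $\lambda<0$. Then write $f_\lambda(x)=e^{-k(x-L)}g_\lambda(x)$. The equation $(R_p-\lambda)f=0$, together with the normalization $g_\lambda(x)\to 1$, is equivalent to the Volterra equation
\[
g_\lambda(x)=1-\int_x^\infty \frac{1-e^{-2k(y-x)}}{2k}\,q(y)\,g_\lambda(y)\,dy ,\qquad x\ge L .
\]
The kernel is bounded by $|y-x|\,q(y)$, or alternatively by $q(y)/k$, and $\int_L^\infty (1+y-L)\,q(y)\,dy<\infty$. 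Hence the standard Picard iteration in $C_b([L,\infty))$ converges; the $m$-th iterate is bounded by $\bigl(\int_x^\infty q/k\bigr)^m/m!$. This yields a unique bounded solution $g_\lambda$ with $g_\lambda(x)\to 1$, and so $e^{k(x-L)}f_\lambda(x)\to 1$.

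Next I will establish regularity and uniqueness. Differentiating the integral equation shows that $g_\lambda'$ is bounded and decays. Since $q$ is bounded, $f_\lambda''=(\omega-\lambda-q)f_\lambda$, and both $f_\lambda$ and $f_\lambda''$ are $O(e^{-k(x-L)})$. Therefore $f_\lambda\in H^2([L,\infty))$. For uniqueness up to constants, take any other solution of the ODE. By the Wronskian (constant, since the equation has no first-order term), it is $c_1 f_\lambda+c_2\tilde f_\lambda$, where $\tilde f_\lambda$ grows like $e^{k x}$ (obtained by reduction of order). Decay then forces $c_2=0$; this is the limit-point argument already quoted from \cite{BerShu91}.

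For the $C^1$ dependence I will treat the integral equation as a fixed point $g=1+T_\lambda g$ in $C_b$. Each iterate depends smoothly on $\lambda$, since the kernel $K_\lambda(x,y)=\frac{1-e^{-2k(y-x)}}{2k}q(y)$ is $C^1$ in $k$. The derivative $\partial_k K_\lambda$ is bounded by $C(1+(y-x)^2)q(y)$, which is still integrable thanks to the exponential decay of $q$. Uniform convergence of the Neumann series together with its $\lambda$-derivatives on compact subsets of $I$ then gives $\lambda\mapsto g_\lambda\in C_b$ of class $C^1$, and in particular $\lambda\mapsto f_\lambda(L)=g_\lambda(L)$ is $C^1$. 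Alternatively, $I-T_\lambda$ is invertible, and the implicit function theorem in $C_b$ applies.

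The main obstacle is upgrading this to $C^1$ into $H^2([L,\infty))$. Here the difficulty is the factor $e^{-k(x-L)}$: its $\lambda$-derivative is $-k'(\lambda)(x-L)e^{-k(x-L)}$, which is polynomially weighted but still exponentially decaying, hence in $H^2$. To handle it, I will work in the weighted space of functions with $\sup_x e^{\sigma(x-L)}|h(x)|<\infty$ for some fixed $\sigma<\inf_I k$. In this space, $f_\lambda$, $\partial_\lambda f_\lambda$, and (via the ODE) their second derivatives are continuous in $\lambda$. The weighted sup norm with $\sigma>0$ controls the $L^2$ norm, and the ODE $f''=(\omega-\lambda-q)f$, together with its $\lambda$-derivative $\partial_\lambda f''=(\omega-\lambda-q)\partial_\lambda f-f$, transfers this control to the second derivatives. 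This gives the $C^1$ map $I\ni\lambda\mapsto f_\lambda\in H^2([L,\infty))$.
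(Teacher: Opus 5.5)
Your proposal is correct and follows essentially the same route as the paper. Both arguments use the ansatz $f=e^{-\sqrt{\omega-\lambda}\,(x-L)}g$, the same Volterra kernel $\frac{1-e^{-2k(y-x)}}{2k}$ with factorial bounds on the iterates to invert $I-T$ on $C_b([L,\infty))$, $C^1$ dependence inherited from the smooth dependence of the kernel on $k$, and the $H^2$ upgrade through the ODE $f''=(\omega-\lambda-q)f$ and its $\lambda$-derivative. Your departures are minor refinements: an explicit Wronskian/reduction-of-order argument for uniqueness up to constants, which the paper takes for granted, and a weighted sup-norm space in place of the paper's $L^2$-product argument for handling the factor $e^{-k(x-L)}$.
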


\begin{proof}
Write $l(x):=-p\,\Psi^{p-1}_a(x)$. From the explicit form \eqref{eq:Theta-p} and $p>1$,
the function $\Psi_a$ is smooth and decays exponentially at $+\infty$, so
$l\in C^\infty([L,\infty))$ and there exist constants $C,\eta>0$ with
\begin{equation}\label{eq:l-decay}
|l(x)|\le C e^{-\eta x},\qquad x\ge L.
\end{equation}
In particular $l\in L^1([L,\infty))\cap L^\infty([L,\infty))$. For $\lambda<0$ set
\[
\mu=\mu(\lambda):=\sqrt{\omega-\lambda}>0.
\]
The equation $(R_p-\lambda)f=0$ rewrites as
\begin{equation}\label{eq:f-eq}
f''(x)=\bigl(\mu^2+l(x)\bigr)f(x),\qquad x\ge L.
\end{equation}

\medskip
We seek for decaying solutions of \eqref{eq:f-eq} in the form
\begin{equation}\label{eq:f-ansatz}
f(x)=e^{-\mu(x-L)}m(x).
\end{equation}
Substituting into \eqref{eq:f-eq} and dividing by $e^{-\mu(x-L)}$ gives
\begin{equation}\label{eq:m-eq}
m''(x)-2\mu\,m'(x)=l(x)\,m(x).
\end{equation}
The homogeneous equation $m''-2\mu m'=0$ has fundamental system
$\{1,e^{2\mu x}\}$ with Wronskian $W(x)=2\mu e^{2\mu x}$. Variation of
parameters, together with the normalization $m(x)\to 1$ as $x\to+\infty$
(which selects, among the one-dimensional family of decaying solutions of
\eqref{eq:f-eq}, the one with asymptotic behavior
$f(x)\sim e^{-\mu(x-L)}$), yields the Volterra equation
\begin{equation}\label{eq:m-volterra}
m(x)=1+(T_\mu m)(x),\qquad
(T_\mu g)(x):=\int_x^\infty K_\mu(x,s)\,l(s)\,g(s)\,ds,
\end{equation}
with kernel
\begin{equation}\label{eq:K-def}
K_\mu(x,s):=\frac{1-e^{-2\mu(s-x)}}{2\mu},\qquad s\ge x\ge L.
\end{equation}
A direct check confirms that any bounded $C^2$ solution of \eqref{eq:m-volterra} satisfies
\eqref{eq:m-eq}. Thus
$f=e^{-\mu(\cdot-L)}m$ is a decaying solution of $(R_p-\lambda)f=0$ if and only if
$m$ solves \eqref{eq:m-volterra}.

In the following we show the existence of such one $m$. For $s\ge x\ge L$ and $\mu>0$ we have $0\le 1-e^{-2\mu(s-x)}\le 1$, so
\begin{equation}\label{eq:K-bound}
0\le K_\mu(x,s)\le \frac{1}{2\mu}.
\end{equation}

\medskip
Define $\rho(x):=\int_x^\infty|l(s)|\,ds$. By \eqref{eq:l-decay} we have $\rho\in
C^1([L,\infty))$, $\rho'=-|l|$, $\rho$ is nonincreasing, and $\rho(x)\to 0$ as
$x\to+\infty$. We claim that for every $g\in C_b([L,\infty))$ and every $n\ge 0$,
\begin{equation}\label{eq:T-n-bound}
|(T_\mu^n g)(x)|\le \|g\|_\infty\,\frac{\rho(x)^n}{n!\,(2\mu)^n},\qquad x\ge L.
\end{equation}
The case $n=0$ is trivial. Assume \eqref{eq:T-n-bound} holds for $n$. Using
\eqref{eq:K-bound} and $\rho'=-|l|$,
\[
|(T_\mu^{n+1}g)(x)|\le\frac{\|g\|_\infty}{(2\mu)^{n+1} n!}
\int_x^\infty|l(s)|\rho(s)^n\,ds
=\frac{\|g\|_\infty}{(2\mu)^{n+1} n!}\cdot\frac{\rho(x)^{n+1}}{n+1}
=\frac{\|g\|_\infty\,\rho(x)^{n+1}}{(n+1)!\,(2\mu)^{n+1}},
\]
where we used $$\int_x^\infty|l|\rho^nds=-\int_x^\infty(\rho^{n+1}/(n+1))'ds=\rho(x)^{n+1}/(n+1).$$
This proves \eqref{eq:T-n-bound} by induction.

In particular, the relation
\[
\|T_\mu^n\|_{\mathcal B(C_b([L,\infty)))}
\le\frac{\rho(L)^n}{n!\,(2\mu)^n},
\]
implies that the spectral radius associated to the bounded operator $T_\mu: C_b([L, +\infty))\to C_b([L, +\infty))$ vanishes and so $\sigma(T_\mu)=\{0\}$. Therefore, $I-T_\mu$ is invertible with
\begin{equation}\label{eq:Neumann-norm}
(I-T_\mu)^{-1}=\sum_{n=0}^\infty T_\mu^n,
\qquad
\|(I-T_\mu)^{-1}\|_{\mathcal B(C_b)}\le e^{\rho(L)/(2\mu)}.
\end{equation}
Define
\begin{equation}\label{eq:m-def}
m_\lambda:=(I-T_\mu)^{-1}\mathbf 1\in C_b([L,\infty)),
\qquad
\|m_\lambda\|_\infty\le e^{\rho(L)/(2\mu)}.
\end{equation}
By \eqref{eq:T-n-bound} applied to $m_\lambda$ we have
$|(T_\mu m_\lambda)(x)|\le\|m_\lambda\|_\infty\,\rho(x)/(2\mu)\to 0$ as $x\to+\infty$,
so \eqref{eq:m-volterra} gives $m_\lambda(x)\to 1$ as $x\to+\infty$. Consequently
$f_\lambda(x):=e^{-\mu(x-L)}m_\lambda(x)$ is a nontrivial solution of
\eqref{eq:f-eq} satisfying $|f_\lambda(x)|\le \|m_\lambda\|_\infty e^{-\mu(x-L)}$ for
$x\ge L$, hence $f_\lambda\in L^2([L,\infty))\cap C_b([L,\infty))$.

\medskip
From $f_\lambda\in L^2$, $l\in L^\infty$, and \eqref{eq:f-eq},
\[
\|f_\lambda''\|_{L^2}\le(\mu^2+\|l\|_\infty)\|f_\lambda\|_{L^2}<\infty.
\]
On the half-line, the equivalence
\begin{equation}\label{eq:H2-equiv}
\|u\|_{H^2([L,\infty))}^2\;\sim\;\|u\|_{L^2}^2+\|u''\|_{L^2}^2,\qquad u\in H^2([L,\infty)),
\end{equation}
(proved by integration by parts with a Young-inequality absorption of the boundary
term at $L$) then yields $f_\lambda\in H^2([L,\infty))$. This shows the first part of the Lemma

\bigskip
We now focus on the regularity claims on the mapping $\lambda\to f_\lambda$. The analysis is classical and so we give some  highlights. The map $\mu\mapsto K_\mu(x,s)$ is $C^1$ in $\mu$ for each fixed $(x,s)$ and for all
$\mu>0$ and $s\ge x\ge L$, $|\frac{d^{(j)}}{d\mu^{j}}K_\mu(x,s)|\le\frac{1}{2\mu^{j+1}}$, $j=0,1$. Thus,  as  $l\in L^1([L,\infty))$ dominated convergence theorem  gives $\mu\in I\mapsto T_\mu\in \mathcal B(C_b([L,\infty))$  is of class $C^1$ for  every open interval $I\subset(-\infty,0)$ and it permits
differentiation with regard to $\mu$ under the integral sign in  \eqref{eq:m-volterra}.

\medskip
Now, since $\sigma(T_\mu)=\{0\}$ for every $\mu>0$, the operator $I-T_\mu$ is invertible,
and the inversion map
\[
\Phi:\{A\in\mathcal B(C_b):1\notin\sigma(A)\}\to\mathcal B(C_b),\qquad A\mapsto(I-A)^{-1},
\]
is real-analytic (it is given locally by the convergent Neumann series). Composition
of $C^1$ maps yields that
\[
\mu\mapsto m_\mu:=(I-T_\mu)^{-1}\mathbf 1\in C_b([L,\infty))
\quad\text{is }C^1
\]

Since the map $\mu\mapsto e_\mu:=e^{-\mu(\cdot-L)}\in L^2([L,\infty))$ is also $C^1$
we have
\[
\mu\mapsto f_\mu:=e_\mu\,m_\mu\in L^2([L,\infty))
\quad\text{is }C^1,
\]
with derivative $\partial_\mu f_\mu=(\partial_\mu e_\mu)m_\mu+e_\mu(\partial_\mu m_\mu)$.

Next, to upgrade to $H^2$ the analysis above , differentiate \eqref{eq:f-eq} in $\mu$ to get
\begin{equation}\label{eq:f-mu-eq}
(\partial_\mu f_\mu)''=(\mu^2+l)\,\partial_\mu f_\mu+2\mu\,f_\mu.
\end{equation}
Since $\partial_\mu f_\mu\in L^2$, $f_\mu\in L^2$, and $l\in L^\infty$, the right-hand
side of \eqref{eq:f-mu-eq} is in $L^2$ uniformly in $\mu$ obviously on compact sub-intervals
of $(0, +\infty)$. Combining with the original equation \eqref{eq:f-eq} and \eqref{eq:H2-equiv}
gives $\mu\mapsto f_\mu\in H^2([L,\infty))$ of class $C^1$ on  $(0, +\infty)$.

\medskip
In conclusion, being the map $\lambda\mapsto\mu(\lambda)=\sqrt{\omega-\lambda}$  real-analytic on
$(-\infty,0)$ and  $\mu\mapsto f_\mu$ of class $C^1$, we have that  $\lambda\mapsto f_\lambda$ is
$C^1$ from  $(0, +\infty)$ to $H^2([L,\infty))$. 
Finally, the half-line Sobolev embedding
$H^2([L,\infty))\hookrightarrow C^0([L,\infty))$ implies that the trace
functional
\[
\operatorname{tr}_L:H^2([L,\infty))\to\C,\qquad u\mapsto u(L),
\]
is continuous and linear. Composition with the $C^1$ map
$\lambda\mapsto f_\lambda\in H^2([L,\infty))$ yields that
$\lambda\mapsto f_\lambda(L)$ is $C^1$ on $(0, +\infty)$, with
$\partial_\lambda f_\lambda(L)=(\partial_\lambda f_\lambda)(L)$. This finishes the proof.
\end{proof}

\section{Orbital (in)stability criterion}\label{ApA}

In the Grillakis-Shatah-Straus (GSS, \cite{GSS2,GSS1}) setting we assume the existence of $C^2$-conserved functionals $E_{Z}:H^1(\dg)\to \R$ (energy) with $E_{Z}(\mathbf{U}(t))=E_{Z}(\mathbf{U}(0))$, and $Q:L^2(\dg)\to \R$ (mass) with $Q(\mathbf{U}(t))=Q(\mathbf{U}(0))=\|\mathbf{U}(0)\|^2_2$.
Note $E_{Z}$ is invariant by the phase symmetry $T(\theta)=e^{i\theta}$ for any $\theta\in[0,2\pi)$. Note also the standing waves are written in the form $T(\omega t)\Theta(x)$.\smallskip

Assume the Cauchy problem associated to \eqref{NLS} is locally well-posed in the energy space $H^1(\dg)$.

\smallskip
We suppose the existence of a $C^1$ map on $\mathcal{O}\subset \R$,  $$\mathcal{O}\ni \omega \mapsto \Theta_\omega\in H^1(\dg)$$ of stationary solutions that are critical points of the action functional $S=E_{Z}+\omega Q$. 
\bigskip

Define on $\mathcal{O}$ the function \begin{equation}
    \label{42}\rho(\omega_0):=\begin{cases}
        1,& \mbox{if } \partial_\omega \|\Theta_\omega\|^2 >0 \ \ \mbox{at }\omega=\omega_0.\\
        0, & \mbox{if } \partial_\omega \|\Theta_\omega\|^2 <0 \ \ \mbox{at }\omega=\omega_0.
    \end{cases}
\end{equation}
For the (in)stability study of $\Theta_\omega$ the main information will be given by the second variation $$S''(\Theta_\omega)(\mathbf{U},\mathbf{V})=[\mathcal{L}_{+,Z}\mathbf{U_1},\mathbf{V}_1]+[\mathcal{L}_{-,Z}\mathbf{U_2},\mathbf{V}_2], \ \ \ \mathbf{U}=\mathbf{U_1}+i\mathbf{U_2}, \ \ \mathbf{U}=\mathbf{V_1}+i\mathbf{V_2}, $$
in the following sense: 

\begin{theorem}\label{2main}
 Suppose $\mbox{Ker}(\mathcal{L}_{-,Z})=\mbox{span}\{ \Theta_\omega\}$ and $\mbox{Ker}(\mathcal{L}_{+,Z})=\{0\}$. Assume also the Morse indices of $n(\mathcal{L}_{\pm,Z})$ are finite while the rest of the spectra is bounded away from $0$. Then the following hold for $\mathcal{H}:=\mbox{diag}(\mathcal{L}_{+,Z},\mathcal{L}_{-,Z})$:
    \begin{enumerate}
        \item If $n(\mathcal H)=\rho(\omega)=1,$ then the standing wave $e^{i \omega t}\Theta_\omega$ is orbitally stable in the energy space $H^1(\dg)$.
        \item If $n(\mathcal{H})-\rho(\omega)$ is odd, then the standing wave $e^{i \omega t}\Theta_\omega$ is orbitally unstable in the energy space $H^1(\dg)$.
    \end{enumerate}
\end{theorem}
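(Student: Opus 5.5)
The plan is to follow the standard Grillakis--Shatah--Strauss route \cite{GSS1,GSS2}. The hypotheses on $\mathcal L_{\pm,Z}$ (kernels, finite Morse indices, a spectral gap) reduce everything to the sign of the Hessian $S''(\Theta_\omega)$ on a suitable codimension-one subspace. First we record the structure:
\[
\ker\mathcal H=\operatorname{span}\{i\Theta_\omega\},
\qquad
n(\mathcal H)=n(\mathcal L_{+,Z}),
\]
the latter because $\mathcal L_{-,Z}\ge 0$ by Proposition~\ref{prop:l2}-type hypotheses. We will also use $\mathcal L_{+,Z}\,\partial_\omega\Theta_\omega=-\Theta_\omega$, obtained by differentiating $S'(\Theta_\omega)=0$ in $\omega$. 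This gives the key scalar
\[
\langle \mathcal L_{+,Z}\partial_\omega\Theta_\omega,\partial_\omega\Theta_\omega\rangle=-\tfrac12\,\partial_\omega\|\Theta_\omega\|^2 ,
\]
whose sign is encoded by $\rho(\omega)$.

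For item (1), assume $n(\mathcal H)=\rho(\omega)=1$. The quantity above is then negative, so $\partial_\omega\Theta_\omega$ carries the single negative direction. We show $S''(\Theta_\omega)$ is coercive on
\[
\{\mathbf U:\ \langle \mathbf U,\Theta_\omega\rangle=0,\ \langle \mathbf U,i\Theta_\omega\rangle=0\}.
\]
The argument is the usual one. Decompose $\mathbf U$ along the negative eigenvector, the kernel and the positive spectral subspace. Use the spectral gap and a min--max/Cauchy--Schwarz argument to get $S''(\Theta_\omega)(\mathbf U,\mathbf U)\ge c\|\mathbf U\|_{H^1}^2$. Then build the Lyapunov functional $S(\mathbf U)-S(\Theta_\omega)$, modulate the phase via the implicit function theorem to enforce orthogonality to $i\Theta_\omega$, and correct the mass constraint by moving along the curve $\omega\mapsto\Theta_\omega$. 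Conservation of $E_Z$ and $Q$ (Theorem~\ref{global}) then keeps the solution in an $\varepsilon$-tube around the orbit, which is orbital stability in $H^1(\dg)$.

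For item (2), assume $n(\mathcal H)-\rho(\omega)$ is odd. Consider the linearized Hamiltonian operator $J\mathcal H$, which is the matrix in \eqref{eq:spectral-insta}. We count its real positive eigenvalues via the Grillakis/Jones index argument \cite{Gril1988}. The number of negative directions of $\mathcal H$ restricted to $\{\Theta_\omega\}^\perp$ equals $n(\mathcal L_{+,Z})-\rho(\omega)$. A parity (degree) argument on the restricted Evans/Krein-type count then forces at least one real positive eigenvalue $\lambda$ whenever this number is odd. This gives spectral, i.e.\ linear, instability. It is upgraded to nonlinear orbital instability using the $C^2$ regularity of the data-to-solution map, as in Remark~\ref{lintononlin}.

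The main obstacle is the Lyapunov construction in item (1) on the graph. It needs the spectral gap away from $0$ for $\mathcal L_{\pm,Z}$ on the $\delta'$ domain, together with the equivalence of the quadratic form of $S''(\Theta_\omega)$ with the $H^1$ norm, which must control the vertex term $\frac1{Z}|\sum\psi_j(L)|^2$. We would handle this with the trace inequality already used in Theorem~\ref{thm:GWPcrit}, absorbing the boundary term into the gradient. Essential spectrum starting at $\omega>0$ then yields the gap.
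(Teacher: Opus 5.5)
This is essentially the route the paper relies on: it gives no independent proof of Theorem~\ref{2main}, but quotes the GSS Lyapunov argument for (1) and the GSS parity (spectral) instability result \cite{GSS2} for (2), upgraded to orbital instability via the $C^2$ data-to-solution map and \cite{HPW} exactly as in Remark~\ref{lintononlin}; so your (2), like the paper's, tacitly uses that extra regularity beyond the stated hypotheses. One small correction: $\mathcal L_{-,Z}\ge 0$ is not among the hypotheses, so in (2) the negative count of $\mathcal H$ on $\{\Theta_\omega\}^\perp$ is $n(\mathcal H)-\rho(\omega)$ rather than $n(\mathcal L_{+,Z})-\rho(\omega)$ (the two differ in parity whenever $n(\mathcal L_{-,Z})$ is odd), while in (1) no such assumption is needed, since $n(\mathcal H)=\rho(\omega)=1$ together with $\langle \mathcal L_{+,Z}\partial_\omega\Theta_\omega,\partial_\omega\Theta_\omega\rangle<0$ already forces $n(\mathcal L_{-,Z})=0$.
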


\begin{remark}
    \label{lintononlin}
The second item above deserve further discussion. From \cite{GSS2} when $n(\mathcal{H})-\rho(\omega)$ is odd it is obtained that $e^{i \omega t}\Theta_\omega$ is spectrally unstable via the existence of nonzero eigenvalues for the linearized operator $$J\mathcal{H}=\begin{pmatrix}
    0&I_{N+1}\\-I_{N+1}&0
\end{pmatrix}
\begin{pmatrix}
    \mathcal{L}_{+,Z}&0\\ 0& \mathcal{L}_{-,Z}
\end{pmatrix}=\begin{pmatrix}
    0&\mathcal{L}_{-,Z}\\ -\mathcal{L}_{+,Z}&0
\end{pmatrix}$$ (see \cite[Theorem~5.1]{GSS2} and \cite[Definition~5.4]{G} for a complete description). In order to get nonlinear instability via \cite[Theorem~6.1]{GSS2} one need the semigroup estimate $$\|e^{tJ\mathcal{H}}\|\le b e^{\mu t} \ \ \mbox{for some } \mu <2\mbox{Re}\lambda,$$ being $\lambda$ an eigenvalue of $J\mathcal{H}$ with positive real part. In the case of Schr\"odinger operators posed on metric graphs we don't know a "standardized" way to obtain such a result. One may obtain nonlinear instability using the approach of \cite[Theorem~2]{HPW} provided the data--solution map is at least of class $C^2$, which is the case of this manuscript (see the details for instance on \cite[Theorem~5.2]{G} or \cite[Theorem~3.5]{AP3}. See also applications of such ideas, for instance, in \cite{ALN,ANata1}).
\end{remark}

\section{Summary tables}\label{ap:D}

For the reader's convenience, this sections provides a complete description of the (in)stability results established Theorem \ref{thm:insta}, the Morse index results in Theorem~\ref{thm:morse1} and the slope sign analysis in Theorem~\ref{thm:slope-condition}. Recall $Z<0$, $N\geqq 2$, $n\in \{1,2,..., N-1\} $, $\omega_n^*=\tfrac{(n^{2/3}+(N-n)^{2/3})^3}{Z^2}$ was defined in \eqref{eq:omega-n-star} and $\omega_n^\sharp=\tfrac{(\alpha_n(t_n^\sharp))^2}{Z^2}$ where $t_n^\sharp$ is defined in the proof of Theorem~\ref{thm:slope-condition}. 
\begin{figure}[H]
    \centering
\begin{tabular}{|c|c|ccc|}
\hline
Cluster configuration & Frequencies & \multicolumn{1}{c|}{Constraints} & \multicolumn{1}{c|}{(In)Stability} & Space \\ \hline
\multirow{5}{*}{$2n<N$} & $\omega>\omega_n^\sharp$ & \multicolumn{1}{c|}{$n=1$, $N>2$} & \multicolumn{1}{c|}{Stable} & $H^1(\dg)$ \\ \cline{2-5} 
 & $\omega>\omega_n^\sharp$ & \multicolumn{1}{c|}{--} & \multicolumn{1}{c|}{Stable} & $\xm\cap H^1(\dg)$ \\ \cline{2-5} 
 & $\omega>\omega_n^\sharp$ & \multicolumn{1}{c|}{$N>2$, $n\ge 2$} & \multicolumn{1}{c|}{Unstable} & $H^1(\dg)$ \\ \cline{2-5} 
 & $\omega_n^*<\omega<\omega_n^\sharp$ & \multicolumn{1}{c|}{-} & \multicolumn{1}{c|}{Unstable} & $\xm\cap H^1(\dg)\, (\Rightarrow H^1(\dg))$ \\ \cline{2-5} 
 & $\omega=\omega_n^\sharp$ & \multicolumn{3}{l|}{Method fails because of degenerated slope condition (see Table~\ref{table:3})} \\ \hline
\multirow{3}{*}{$2n=N$} & $\omega>\omega_n^*$ & \multicolumn{1}{c|}{$n=1$, $N=2$} & \multicolumn{1}{c|}{Stable} & $H^1(\dg)$ \\ \cline{2-5} 
 & $\omega>\omega_n^*$ & \multicolumn{1}{c|}{--} & \multicolumn{1}{c|}{Stable} & $\xm\cap H^1(\dg)$ \\ \cline{2-5} 
 & $\omega>\omega_n^*$ & \multicolumn{1}{c|}{$N>2$} & \multicolumn{1}{c|}{Unstable} & $H^1(\dg)$ \\ \hline
\multirow{4}{*}{$2n>N$} & $\omega>\tfrac{2N^2}{Z^2}$ & \multicolumn{1}{c|}{--} & \multicolumn{1}{c|}{Stable} & $\xm\cap H^1(\dg)$ \\ \cline{2-5} 
 & $\omega>\tfrac{2N^2}{Z^2}$ & \multicolumn{1}{c|}{$N>2$} & \multicolumn{1}{c|}{Unstable} & $H^1(\dg)$ \\ \cline{2-5} 
 & $\omega_n^*<\omega<\tfrac{2N^2}{Z^2}$ & \multicolumn{1}{c|}{--} & \multicolumn{1}{c|}{Unstable} & $\xm\cap H^1(\dg)\, (\Rightarrow H^1(\dg))$ \\ \cline{2-5} 
 & $\omega=\tfrac{2N^2}{Z^2}$ & \multicolumn{3}{l|}{Method fails because $\operatorname{dim}(\ker(\mathcal{L}_{+,Z}))=N-1$ while $n(\mathcal{L}_{+,Z})=1$.} \\ \hline
All configurations & $\omega=\omega_n^*$ & \multicolumn{3}{l|}{Method fails by lack of smoothness at $t_1(\omega_n^*)$ (see for instance \eqref{eq:t1linha} or \eqref{eq:der-Qw})} \\ \hline
\end{tabular}
\setcounter{figure}{0}
\captionsetup{name=Table, labelsep=endash} 
    \caption{Summary of (in)stability results in Theorem~\ref{thm:insta}.}
    \label{table:1}
\end{figure}

\begin{figure}[H]
    \centering
\begin{tabular}{|c|c|c|c|}
\hline
Cluster configuration & Frequency & Morse index & Space \\ \hline
\multirow{2}{*}{$2n\le N$} & $\omega>\omega_n^*$ & $n$ & $\dom$ \\ \cline{2-4}
 & $\omega>\omega_n^*$ & $1$ & $\xm\cap\dom$ \\ \hline
\multirow{5}{*}{$2n>N$} & $\omega_n^*<\omega<\tfrac{2N^2}{Z^2}$ & $N-n+1$ & $\dom$ \\ \cline{2-4}
 & $\omega_n^*<\omega<\tfrac{2N^2}{Z^2}$ & $2$ & $\xm\cap\dom$ \\ \cline{2-4}
 & $\omega=\tfrac{2N^2}{Z^2}$ & $1$ & $\xm\cap\dom,\, H^1(\dg)$ \\ \cline{2-4}
 & $\omega>\tfrac{2N^2}{Z^2}$ & $n$ & $\dom$ \\ \cline{2-4}
 & $\omega>\tfrac{2N^2}{Z^2}$ & $1$ & $\xm\cap\dom$ \\ \hline
\end{tabular}
\setcounter{figure}{1}
\captionsetup{name=Table, labelsep=endash}
    \caption{Summary of the Morse index for $\mathcal L_{+,Z}$ in Theorem~\ref{thm:morse1}.}
    \label{table:2}
\end{figure}

\begin{figure}[H]
    \centering
    \begin{tabular}{|c|c|c|}
\hline
Cluster configuration & Frequency & Slope sign \\ \hline
\multirow{3}{*}{$2n<N$} & $\omega_n^*<\omega<\omega_n^\sharp$ & Negative \\ \cline{2-3}
 & $\omega=\omega_n^\sharp$ & $0$ \\ \cline{2-3}
 & $\omega>\omega_n^\sharp$ & Positive \\ \hline
$2n\ge N$ & $\omega>\omega_n^*$ & Positive \\ \hline
\end{tabular}
    \setcounter{figure}{2}
\captionsetup{name=Table, labelsep=endash}
    \caption{Summary of the slope sign analysis in Theorem~\ref{thm:slope-condition}.}
    \label{table:3}
\end{figure}

\end{document}